\newcommand{\Aa}{\mathcal A} 
\newcommand{\curves}{\mathfrak{C}}
\newcommand{\Ff}{\mathcal F}
\newcommand{\Rr}{\mathcal R}
 \newcommand{\Cc}{\mathcal C}
 \newcommand{\RR}{\mathbf{R}}  
 \newcommand{\BB}{\mathbf{B}}  
    \newcommand{\dist}{\operatorname{dist}}
 \newcommand{\area}{\operatorname{area}}
 \newcommand{\eps}{\epsilon}
\newcommand{\vv}{\mathbf v}
\newcommand{\ee}{\mathbf e}
\newcommand{\zmax}{z^*}
\newcommand{\pdf}[2]{\frac{\partial #1}{\partial #2}}
 \newcommand{\partialin}{\partial_\textnormal{inner}}
\newcommand{\partialout}{\partial_\textnormal{outer}}
\newtheorem*{*theorem}{Theorem}
\def\begfig {
\begin{figure}
\small }
\def\endfig {
\normalsize
\end{figure}
}
    \newtheorem{theorem}    {Theorem}       [section]
    \newtheorem{lemma}      [theorem]       {Lemma}
    \newtheorem{corollary}  [theorem]     {Corollary}
    \newtheorem{proposition}       [theorem]       {Proposition}
    \newtheorem{claim}{Claim}
    \newtheorem*{claim*}{Claim}
    \newtheorem*{theorem*}{Theorem}
    \theoremstyle{definition}
    \newtheorem{definition}  [theorem] {Definition}
     \newtheorem{conjecture}  [theorem] {Conjecture}
    \theoremstyle{definition}
    \newtheorem{remark}   [theorem]       {Remark}
\begin{document}
\title[Annuloids and $\Delta$-wings]{Annuloids and $\Delta$-wings}
\author[D. Hoffman]{\textsc{D. Hoffman}}

\address{David Hoffman\newline
 Department of Mathematics\newline
 Stanford University \newline
   Stanford, CA 94305, USA\newline
{\sl E-mail address:} {\bf dhoffman@stanford.edu}}

\author[F. Martin]{\textsc{F. Mart\'in}}

\address{Francisco Mart\'in\newline
Departmento de Geometr\'ia y Topolog\'ia  \newline
Instituto de Matem\'aticas IMAG Granada \newline
Universidad de Granada\newline
18071 Granada, Spain\newline
{\sl E-mail address:} {\bf fmartin@ugr.es}
}
\author[B. White]{\textsc{B. White}}

\address{Brian White\newline
Department of Mathematics \newline
 Stanford University \newline 
  Stanford, CA 94305, USA\newline
{\sl E-mail address:} {\bf bcwhite@stanford.edu}
}

\date{23 August, 2023}
\subjclass[2010]{Primary 53C44, 53C21, 53C42}
\keywords{Mean curvature flow, minimal foliations, translating solitons, area estimates, comparison principle.}
\thanks{F. Martín  was partially supported by the MICINN grant PID2020-116126-I00, by the IMAG--Maria de Maeztu grant CEX2020-001105-M / AEI / 10.13039/501100011033 and  by the Regional Government of Andalusia and ERDEF grant P20-01391.
B. White was partially supported by grants from the Simons Foundation
(\#396369) and from the National Science Foundation (DMS~1404282, DMS~ 1711293).}


\begin{abstract}
We describe new annular examples of complete translating solitons for the mean curvature flow and how they are related to a family of translating graphs, the $\Delta$-wings. In addition, we will prove several related results that answer questions that arise naturally in this investigation. These results apply to translators in general, not just to graphs or annuli.
\end{abstract}

\maketitle

\section{Introduction}\label{sec:intro}

A {\bf translator} in $\RR^3$ is a surface $M$ such that 
\[
   t\mapsto M- t \,\ee_{3}
\]
is a mean curvature flow, i.e., such that normal component of the velocity at each point is
equal to the mean curvature at that point:
\begin{equation}\label{general-translator-equation}
   \overrightarrow{H} = -\ee_{3}^\perp.
\end{equation}
As observed by Ilmanen~\cite{ilmanen_1994},  a surface $M\subset\RR^3$ 
is a translator if and only if
it is minimal with respect to the Riemannian metric

\[
    g_{ij} = e^{-z} \delta_{ij}.
\]
 
In \cite{graphs},  we classified all the translators that are graphs over domains in $\RR^2=\{z=0\}$. 
(See also the survey paper \cite{himw-survey}.) That classification depends on the fundamental advances in the paper of Spruck and Xiao \cite{spruck-xiao}. What we learned from that classification led us to a construction of complete annular translators.\cite{annuloids} 
In this paper,  we will describe  these new annular examples  and how they  are related to a family of graphs, the $\Delta$-wings. 

In addition we present three  related results  that answer questions that arise naturally in this investigation. They apply to translators in general, not just to graphs or annuli.  The first one is proved in \cite{annuloids} and depends upon a basic result (Proposition \ref{prop:gap}) that is important to the understanding of  the other two theorems.

\begin{theorem}
[\cite{annuloids}*{Theorem~18.5}] 
\label{th:gap3}
Let $U_n\subset U_n'$ be nested, open, convex regions in $\RR^2$ such that $U_n$ converges to a bounded open 
convex set $U$ and such that $U_n'$ converges to an infinite strip $U'$.
Suppose that
\[
    \min\{ |p-q|: p\in \partial U, q\in \partial U'\} \ge \pi.
\]
Then, for all sufficiently large $n$, there is no connected translator in $\{z\ge 0\}$ whose
boundary is $S_n:=((\partial U_n)\cup (\partial U_n'))\times\{0\}$.
\end{theorem}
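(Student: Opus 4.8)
The plan is to argue by contradiction. If the conclusion fails there are arbitrarily large $n$ admitting a connected translator $M=M_n\subset\{z\ge 0\}$ with $\partial M=S_n$; fix one such $n$, chosen large. The first step is confinement. Since translators are precisely the minimal surfaces for $g=e^{-z}\delta$, and every vertical plane is totally geodesic for $g$ and bounds a $g$-convex half-space, a standard maximum-principle (barrier) argument forces $M$ into the solid convex cylinder over the convex hull of the shadow of $\partial M$. As $\partial U_n\subset U_n\subset U_n'$, that hull equals $\overline{U_n'}$, so $M\subset \overline{U_n'}\times[0,\infty)$.

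Second, I would build grim-reaper barriers. After a rotation, write $U'=\{0<x<W\}$; the distance hypothesis forces $W\ge 2\pi$ and $\overline U\subset[\pi,\,W-\pi]\times\RR$, and by $U_n\to U$, $U_n'\to U'$ we get $\overline{U_n}\subset(\pi-\eps_n,\,W-\pi+\eps_n)\times\RR$ for some $\eps_n\downarrow 0$. For $s\in[-\eps_n,\,W-\pi+\eps_n]$ let $\Gamma_s$ denote the grim-reaper cylinder $\{z=-\log\sin(x-s):\ s<x<s+\pi\}$: a complete translator lying in $\{z\ge 0\}$, invariant under $y$-translation, with $z\to+\infty$ at both edges of its strip. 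A short computation shows that as $s$ ranges over this interval the surfaces $\Gamma_s$ sweep out the whole solid slab $(\pi-\eps_n,\,W-\pi+\eps_n)\times\RR\times[0,\infty)$, which for $n$ large is an open (in $\{z\ge0\}$) neighborhood of the inner boundary curve $\partial U_n\times\{0\}$. This is exactly the point at which the width $\pi$ of the grim reaper is matched to the $\pi$-gap between $\partial U$ and $\partial U'$.

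Third comes the comparison step, which is the content of Proposition~\ref{prop:gap}: $\operatorname{int}(M)\cap\Gamma_s=\emptyset$ for every $s$. I would prove it by sliding. Translate $\Gamma_s$ up by $c$ to get translators $\Gamma_s^c$; for $c$ large $\Gamma_s^c\cap M=\emptyset$, and we decrease $c$. A first contact point $p$ is either on $\partial M$ — which forces $c=0$, since $\Gamma_s^c\subset\{z\ge c\}$ while $\partial M\subset\{z=0\}$ — or an interior tangency of two $g$-minimal surfaces with $M$ locally on one side; in that case the strong maximum principle and unique continuation (the metric is analytic) give $M\supset\Gamma_s^c$, so the boundaryless complete surface $\Gamma_s^c$ is a connected component of the connected surface $M$, whence $M=\Gamma_s^c$, contradicting $\partial M=S_n\ne\emptyset$. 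Thus interior tangencies never occur, $\operatorname{int}(M)$ misses every $\Gamma_s$ hence their union, and since $M$ is the closure of its interior, $M$ avoids the open neighborhood of $\partial U_n\times\{0\}$ from the previous step — contradicting $\partial U_n\times\{0\}\subset\partial M$.

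The step I expect to be the main obstacle is the comparison when $M$ is noncompact (unbounded in $z$): then $\Gamma_s^c$ need not be disjoint from $M$ for large $c$, so before sliding one must control $M$ near infinity — e.g.\ that translators become vertical as $z\to+\infty$ — or appeal to Proposition~\ref{prop:gap} in a form that already covers properly immersed translators with compact boundary. The only other delicate point is the $\eps_n$-bookkeeping needed to guarantee, uniformly for large $n$, that the width-$\pi$ reapers really do sweep across an open neighborhood of the moving inner curve; this is where the three hypotheses — $U_n\to U$ bounded, $U_n'\to U'$ an infinite strip, and $\dist(\partial U,\partial U')\ge\pi$ — are consumed.
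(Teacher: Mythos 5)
Your proposal runs aground on a sign error that is fatal to the whole barrier scheme, and on a structural choice that the paper deliberately avoids.

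\emph{The sign of the grim reaper.} You take the barrier to be $\Gamma_s=\{z=-\log\sin(x-s):\ s<x<s+\pi\}$, a canyon with $z\to+\infty$ at the edges of its strip. This surface is \emph{not} a translator for the flow $t\mapsto M-t\,\ee_3$ used in this paper. A direct check of the graphical translator equation $\operatorname{div}\!\bigl(\nabla u/\sqrt{1+|\nabla u|^2}\bigr)=-1/\sqrt{1+|\nabla u|^2}$ shows that $u=-\log\sin x$ gives $+\sin x$ on the left and $-\sin x$ on the right; by contrast $u=\log\cos y$ gives $-\cos y$ on both sides. The grim reaper surface is $z=\log\cos y$ (plus a constant): it opens \emph{downward}, with $z\to-\infty$ at the strip edges, and Section~\ref{sec:graphs} of the paper uses it that way throughout. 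Your ``canyon'' is the translator for the reversed flow $+\ee_3$, and the sliding argument you describe is really the elementary fact that for that reversed flow there is no translator in $\{z\ge 0\}$ with boundary on $\{z=0\}$.

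\emph{The sliding proves far too much.} Accepting your barriers as stated, the family $\{\Gamma_s^c : s\in\RR,\ c>0\}$ covers all of $\{z>0\}$ (the grim-reaper through $(x_0,y_0,z_0)$ is $\Gamma^{z_0}_{x_0-\pi/2}$). If the sliding really showed $\Gamma_s^c\cap M=\emptyset$ for all $s$ and all $c>0$, it would follow that $M\cap\{z>0\}=\emptyset$, i.e.\ that \emph{no} translator in $\{z\ge 0\}$ with boundary in $\{z=0\}$ exists---false, since graphical disks $D(\Gamma)$ exist in abundance. Once the correct (arch-shaped) grim reaper is used, it can only serve as a \emph{lower} barrier pushed up from below, which is precisely the mechanism inside the paper's Proposition~\ref{prop:gap}; but that proposition requires $\partial M$ to be entirely \emph{outside} the width-$\pi$ slab. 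Your inner curve $\partial U_n\times\{0\}$ sits in the middle of the region, so one cannot apply the proposition directly with $M=M_n$ for a fixed large $n$---the very obstacle you flagged at the end but did not resolve.

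\emph{The paper's route.} The published proof never works with a single $M_n$. It assumes a sequence $(M_n)$ of counterexamples, passes to a subsequential \emph{limit} set $M$, and observes that the limit boundary is $(\partial U\cup\partial U')\times\{0\}$, which is disjoint from two width-$\pi$ strips $I_1,I_2\subset U'\setminus\overline U$ adjacent to $\partial U'$ (not adjacent to $\partial U$). Proposition~\ref{prop:gap}, applied to the \emph{limit}, then disconnects $M$ into a compact piece $M^*$ with boundary $\partial U\times\{0\}$ (compact by Lemma~\ref{boundedness-lemma}) plus two outer pieces. Finally, connectedness of each $M_n$ forces a point of $M_n$ on the frontier $\partial K$ of a compact set separating $M^*$ from the rest, so the limit $M$ meets $\partial K$---a contradiction. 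This limit-and-separate argument is what lets Proposition~\ref{prop:gap} be used cleanly, since for finite $n$ the boundary $\partial U_n'$ may still cut into the gap strips. To repair your proposal you would essentially have to reinvent this step.
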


\begin{remark}\label{rem: R3-gap}Theorem~\ref{th:gap3} is a reminiscent of a  classical result  for minimal surfaces in $\RR^3$, a version of which could be stated as follows:  For $C_0$ a closed convex curve in 
$\RR^2=\{z=0\}\subset\RR^3$, let $C_t=C_0+t\ee_3$. Then for $t>0$ sufficiently large, 
$C_0\cup C_t $ bounds no connected minimal surface.
\end{remark}

\begin{theorem} \label{th:bounded-by-lines} Let $M$ be a connected translator in $\{z\geq 0\}$ that lies in the slab $\{|y|<B\}$ and has boundary equal to the
two parallel lines $\{y=\pm b\}\cap \{z=0\}$.  Suppose that either there exists a value of $c\in \RR$ for which
$M\cap \{x=c\}$ is bounded, or that $M$ is simply connected.
Then $b<\pi/2$, and $M$ is part of the the graph of an appropriately translated grim reaper surface:
$$ z=\log(\cos y)-\log(\cos b) \mbox{ on the strip } \{(x,y): |y|<b\}.$$
\end{theorem}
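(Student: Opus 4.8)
\medskip
\noindent\textbf{Proof strategy.}
The plan is a sliding--comparison argument organized around the foliation of the slab $\{|y|<\pi/2\}$ by the vertically translated grim reaper translators
\[
  L_s:=\{\,z=\log\cos y+s\ :\ |y|<\pi/2\,\},\qquad s\in\RR ,
\]
completed by the two vertical planes $\{y=\pm\pi/2\}$, which are themselves translators. \emph{Step 1 (confinement).} I would slide the vertical planes $\{y=c\}$ inward from $c=\pm\infty$: since $\partial M\subset(\{y=b\}\cup\{y=-b\})\cap\{z=0\}$, a first interior point of contact would force $M$ to lie in that plane by the strong maximum principle for the translator equation, which is impossible; hence $\sup_M y\le b$ and $\inf_M y\ge -b$, and a second application gives $M\setminus\partial M\subset\{|y|<b\}$.

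\emph{Step 2 ($b<\pi/2$).} Since $M$ is connected and $\overline M$ meets both lines $\{y=\pm b,\ z=0\}$, there is a point $p\in M$ with $y(p)=0$; as $b>0$ this $p$ lies in $M\setminus\partial M$. Applying Proposition~\ref{prop:gap} at $p$ --- a reach/gap estimate for translators that lie in a halfspace with boundary in the bounding plane, applied to a large compact piece of $M$ --- bounds the horizontal distance from $p$ to $\partial M$; since that distance is exactly $b$, one gets $b<\pi/2$. Thereafter $M$ lies in the foliated slab $\{|y|<b\}\subset\{|y|<\pi/2\}$.

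\emph{Step 3 (the crux: $M$ is a graph).} The heart of the matter is to show the vertical projection $M\to\{|y|<b\}$ is a bijection, and this is precisely where one of the two hypotheses enters, to prevent $M$ from folding back over itself. If some slice $M\cap\{x=c\}$ is bounded, I would run an Alexandrov--type reflection in the vertical planes $\{x=t\}$ (which fix $\partial M$), using the compactness of that slice together with the now available slab width $2b<\pi$, to show first that \emph{every} slice is bounded and then that $M$ is the graph of a bounded nonnegative $u$ on $\{|y|<b\}$ with $u=0$ on $\{y=\pm b\}$. If instead $M$ is simply connected, I would argue --- or reduce to the previous case --- that a simply connected, properly embedded translator in a slab of width $<\pi$ bounded by two parallel lines cannot carry an interior vertical tangent plane separating two sheets, so again it is such a graph. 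I expect this step to be the main obstacle: confinement and identification are soft, but converting the mild hypothesis into genuine single--valuedness is the substantive point, and is presumably where the machinery behind Proposition~\ref{prop:gap} and the Spruck--Xiao convexity estimates cited in the introduction are really used.

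\emph{Step 4 (identification).} The translator graph equation has no zeroth--order term, so the difference of two solutions satisfies a linear elliptic equation without zeroth--order term and obeys the maximum principle on a strip of bounded width. For $s>-\log\cos b$ the leaf function $y\mapsto\log\cos y+s$ exceeds $u$ on $\{y=\pm b\}$, hence on all of $\{|y|\le b\}$; letting $s\downarrow-\log\cos b$ yields $u\le\log\cos y-\log\cos b$. For $s<-\log\cos b$ it lies below $u$ on $\{y=\pm b\}$, hence on $\{|y|\le b\}$; letting $s\uparrow-\log\cos b$ yields $u\ge\log\cos y-\log\cos b$. Therefore $u(x,y)=\log\cos y-\log\cos b$, i.e.\ $M$ is part of the translated grim reaper surface of the statement.
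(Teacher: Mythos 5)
There is a genuine gap, and it is exactly at the place you flag as "the main obstacle": Step 3. You never actually prove that $M$ is a graph, you only say you would "run an Alexandrov-type reflection" or "argue -- or reduce to the previous case" -- and the paper's proof does not do this and does not need to. In fact, the paper \emph{never} establishes that $M$ is a graph; instead it sandwiches $M$ directly between the desired grim reaper surface from above and from below. The ingredient your proposal is missing is the linear height growth bound: using Proposition~\ref{prop: quarter-slab} (which is proved via a comparison with a compact translating surface in a rectangular box), one shows that under either hypothesis, $z\le h+\lambda|x|$ on $M$. The bounded-slice hypothesis gives this directly; the simply-connected hypothesis gives it by cutting $M$ along a curve joining the two boundary lines into two pieces $M_\pm$, each of which has its boundary contained in a half-strip $\{x\ge a\}$ or $\{x\le a\}$, so Proposition~\ref{prop: quarter-slab} and the convex-hull property apply to each piece. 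Once the linear growth is available, one uses the Scherk-like translators $v_\beta$ of Proposition~\ref{prop: Scherk-translator} as upper barriers: because $v_\beta\to+\infty$ on the short sides $x=\pm L(\beta)$ while $M$ grows at most linearly, a vertical translate of $v_\beta$ lies above $M\cap\{|x|<L(\beta)\}$, and letting $\beta\uparrow\pi/2$ forces $M$ below the grim reaper graph. The lower bound uses the graphical disks $u_{L,b}$ over rectangles.

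Your Step 4 also leans on unproved facts even granting Step 3: you compare against $L_s$ on the unbounded strip $\RR\times(-b,b)$, so the maximum principle only closes if you already know $u$ is bounded (or has sublinear growth and invoke a Phragm\'en--Lindel\"of argument); neither is established in your proposal. The linear-growth-plus-Scherk mechanism is precisely what replaces this in the paper, because the Scherk translator has $+\infty$ boundary data on the vertical sides, which dominates any linear growth. Also a minor point in Step 1: sliding planes $\{y=c\}$ inward is unnecessary, since Proposition~\ref{prop:gap} (which you already invoke in Step 2) immediately gives $M\subset\{|y|\le b\}$; and your Step 2 is a bit vague -- the clean statement is that $M$ has no boundary in any substrip of width $\pi$ inside $\{|y|<b\}\times\RR$, so if $b\ge\pi/2$ Proposition~\ref{prop:gap} would force $M$ to miss that slab entirely, contradicting connectedness.
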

See Section~\ref{sec:graphs} for a discussion of grim reaper graphs.
If one assumes that $M$ is a graph, this theorem  is well known.
 We are not assuming that here.

\begin{theorem}\label{th:narrow-slabs} Suppose $M$ is a properly embedded and connected translator that lies in a vertical slab $\{|y|<B\}$.
If there exists a  constant $c$ such that $M\cap\{x=c\}$ is bounded above, then $B\geq \pi$.
\end{theorem}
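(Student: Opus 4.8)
The plan is to argue by contradiction. Assume $M$ is a properly embedded, connected, boundaryless translator contained in the open slab $\{|y|<B\}$, that $M\cap\{x=c\}\subset\{z\le h\}$ for some constants $c,h$, and that $B<\pi$. Since a translator is, up to vertical translation, invariant under mean curvature flow, it is an eternal (hence ancient) flow and the avoidance principle applies; equivalently, $M$ is minimal for the real-analytic metric $g_{ij}=e^{-z}\delta_{ij}$, so the strong maximum principle and unique continuation are available. I will combine these with the curvature estimates for embedded translators --- so that a sequence of $\ee_1$-translates of $M$ subconverges, smoothly on compact sets, to a translator in the closed slab --- and with Proposition~\ref{prop:gap}.

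\emph{Step 1: a global upper bound from a single slice.} Write $M^{+}=M\cap\{x\ge c\}$ and $M^{-}=M\cap\{x\le c\}$; these are translators with boundary $M\cap\{x=c\}\subset\{z\le h\}$ lying in a slab of half-width less than $\pi/2$. Proposition~\ref{prop:gap} is precisely the sharp a priori height estimate for such a configuration, with a piece of a grim reaper cylinder as its extremal instance, and it gives that $M^{\pm}$, hence $M$, lies weakly below the translating grim reaper graph $\{z=\log\cos y+h-\log\cos B:\ |y|<B\}$; in particular $\sup_M z<\infty$. This is the decisive step: it upgrades the weak hypothesis ``one slice is bounded above'' to genuine global control, and in effect excludes the tilted-grim-reaper phenomenon in which every vertical cross-section is bounded above yet $z\to+\infty$ along the surface.

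\emph{Step 2: sliding a grim reaper leaf onto $M$.} Fix $B'$ with $B<B'<\pi/2$ and foliate $\{|y|\le B'\}\times\RR$ by the translators $G_s=\{z=\log\cos y+s:\ |y|\le B'\}$, $s\in\RR$. Since $\sup_M z<\infty$ and $M\subset\{|y|\le B\}$, we have $G_s\cap M=\emptyset$ for $s$ large, and $S:=\sup\{s:\ G_s\cap M\ne\emptyset\}$ is finite. If $S$ is realized at a point $p\in M$, then $p$ is interior to both $M$ and $G_S$ (using $B<B'$, so $M$ avoids the edges $\{|y|=B'\}$ of the leaves), and $M$ lies locally on one side of $G_S$; the strong maximum principle, unique continuation and connectedness then force $M$ to be the entire grim reaper cylinder $\{z=\log\cos y+S\}$, which reaches $|y|\to\pi/2>B$, contradicting $M\subset\{|y|<B\}$. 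If $S$ is not realized, choose $p_n\in M$ with $z(p_n)-\log\cos y(p_n)\to S$; then $y(p_n),z(p_n)$ stay bounded, so $|x(p_n)|\to\infty$, and the translates $M-x(p_n)\ee_1$ subconverge to a translator $M_\infty\subset\{|y|\le B\}$ that lies on one side of the $\ee_1$-invariant leaf $G_S$ and touches it at an interior point; applying the same argument to the component of $M_\infty$ through that point again yields a grim reaper cylinder too wide for $\{|y|<B\}$. In all cases we reach a contradiction, so $B\ge\pi$.

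\emph{Main obstacle.} The crux is Step~1: the hypothesis controls only a single slice, and a translator in a slab need not be bounded above even when every cross-section is (tilted grim reaper cylinders behave this way, though their slabs are not narrow). Establishing the sharp height bound for a \emph{noncompact} translator-with-boundary in a narrow slab --- a maximum principle ``at infinity'' whose extremal barrier is the grim reaper graph --- is where the real work sits, and is exactly Proposition~\ref{prop:gap}. Granting it, Step~2 is a standard sweep-out; its only subtlety is the non-compactness of $M$, which obliges one to argue the tangency on a translated subsequential limit, and to keep the slack $B<B'<\pi/2$ so that the limiting contact point cannot drift to the boundary of the foliated region.
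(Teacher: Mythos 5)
Your Step~1 is where the argument breaks down, and for exactly the reason you flag as the crux. Proposition~\ref{prop:gap} is a slab-avoidance statement: a properly immersed translator in $\{z\ge 0\}$ with no boundary in a slab of width $\pi$ must avoid that slab. It is not a height estimate, and it does not yield $\sup_M z<\infty$. In your configuration, the boundary of $M^{\pm}\cap\{z\ge h\}$ projects onto a half-strip $[c,\infty)\times[-B,B]$ (resp.\ $(-\infty,c]\times[-B,B]$), and the only slabs of width $\pi$ missing that set lie entirely in $\{|y|>B\}$, where you already know $M$ does not live; nothing about the height of $M$ comes out. Moreover you should not expect any a priori uniform upper bound on $z$ from the hypotheses before the contradiction is complete: what one can actually prove (Proposition~\ref{prop: quarter-slab} and Corollary~\ref{quarter-slab-corollary}, via a $g$-area-minimizing barrier in a box together with a horizontal sliding argument) is the linear bound $z\le c+\lambda|x-a|$ on $M$. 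The tilted grim reapers --- for which every slice $\{x=\text{const}\}$ is bounded above yet $\sup z=\infty$ --- show that a uniform bound genuinely fails once the slab is wide enough, so a proof that produced $\sup_M z<\infty$ as a preliminary step would already have to contain most of the theorem.

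This matters because your Step~2 needs the uniform bound to make $S:=\sup\{s:G_s\cap M\ne\emptyset\}$ finite. The grim-reaper leaves $G_s$ are $x$-invariant and finite at every $x$, so if $z$ is allowed to grow linearly in $|x|$ along $M$, the sliding leaf never achieves a last contact and $S=\infty$; your translation-to-a-limit device cannot rescue this because there is no finite $S$ to localize around. The paper sidesteps the problem by replacing the grim-reaper leaf with the Scherk-type translator $v_\beta$ of Proposition~\ref{prop: Scherk-translator}, which takes the value $+\infty$ on the two short sides $\{x=\pm L(\beta)\}$. That blow-up seals off the ends of the compact window $\{|x|\le L(\beta)\}$ so that only the linear bound on $M\cap\{|x|\le L(\beta)\}$ is needed; and the boundary of the graph of $v_\beta+d$ consists of the four vertical lines at $|y|=\beta>B$, which $M$ never meets, so the first contact when sliding $d$ downward must be interior, giving the contradiction by the strong maximum principle. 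If you first establish the linear bound via Corollary~\ref{quarter-slab-corollary} and replace your grim-reaper leaves with the Scherk barriers $v_\beta$, your proof becomes the paper's.
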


\begin{remark}Recently,  Gama, Mart\'in and M\o ller have proved a related result: 

\begin{theorem}[\cite{GMM22}* {Proposition 9.1}] Let $M$ be a complete, embedded, connected translator with finite genus, finite entropy and one end. Suppose
$M$ lies in a slab of width $B$. Then $B \geq\pi$. 

\end{theorem}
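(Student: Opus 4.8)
The plan is to argue by contradiction: assume $M$ lies in $\{|y|<B\}$ with $B<\pi$, that $M\cap\{x=c_{0}\}$ is bounded above, and extract from this a translator of the type excluded by Proposition~\ref{prop:gap}. Since the translator equation \eqref{general-translator-equation} only constrains the second fundamental form and the Gauss map (equivalently, by Ilmanen's observation, $M$ is minimal for $g_{ij}=e^{-z}\delta_{ij}$, and constant conformal factors do not change minimality), translators are invariant under every translation of $\RR^{3}$, including vertical ones. Thus we may normalize so that $M\cap\{x=c_{0}\}\subset\{z\le 0\}$, i.e. $M\cap\{x=c_{0},\,z>0\}=\emptyset$.

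\emph{Step 1 (localizing the tall part).} Because $M$ misses the plane $\{x=c_{0}\}$ above height $0$, the open set $M\cap\{z>0\}$ is disjoint from $\{x=c_{0}\}$, hence each of its connected components lies entirely in $\{x>c_{0}\}$ or entirely in $\{x<c_{0}\}$. If $M\cap\{z>0\}=\emptyset$, then $M$ is itself a complete, properly embedded translator contained in $\{|y|<B\}\cap\{z\le 0\}$ with $B<\pi$, and Proposition~\ref{prop:gap} rules this out directly. Otherwise, fix a component $N$ of $M\cap\{z>0\}$; after relabelling, $N$ lies in the quarter-slab $\{x>c_{0}\}\cap\{|y|<B\}\cap\{z\ge 0\}$ with $\partial N\subset\{z=0\}$, and at least one such $N$ is noncompact since $M$ has no compact components.

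\emph{Step 2 (bounding $N$ above and passing to a limit).} Here grim reaper surfaces serve as barriers. A grim reaper cylinder with axis parallel to $\ee_{1}$, namely a vertical translate of $z=-\log\cos(x-a)$, lives over the strip $\{|x-a|<\pi/2\}$, tends to $+\infty$ at the two walls $x=a\pm\pi/2$, and is a translator; placing such a barrier above $N$ near $\{z=0\}$ and sliding it downward, the strong maximum principle for minimal surfaces in the metric $e^{-z}\delta_{ij}$ forbids a first interior tangency, so the sweepout forces $N\subset\{z\le Z_{1}\}$ for some finite $Z_{1}$. With a uniform height bound in hand, properness of $M$ together with interior curvature estimates for translators (coming, as in \cite{spruck-xiao}, from the area bounds available for translators properly embedded in a slab) lets me take a sublimit of the horizontal translates $N-t\,\ee_{1}$ as $t\to+\infty$; in a slab of width $<\pi$ this limit is forced to be graphical (a Bernstein-type consequence of the narrowness), and it is a translator contained in a slab of width $<\pi$, lying in a half-space $\{z\le Z_{1}\}$, with cross-sections $\{x=\mathrm{const}\}$ bounded above and with boundary structure exactly of the kind excluded by Proposition~\ref{prop:gap} (and, in its limiting/nested-convex form, by Theorem~\ref{th:gap3}). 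This contradiction gives $B\ge\pi$.

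The main obstacle is Step 2: since $M$ is not assumed to be a graph, the sliding-barrier sweepout must be organized so that it is not obstructed by parts of $N$ escaping to $x=+\infty$, and one needs enough uniform curvature control to pass to the limit of the translates $N-t\,\ee_{1}$ — this is precisely where $B<\pi$, and the consequent absence of "thick'' translator pieces in the slab, is used. A subsidiary point, handled by the same circle of ideas (reflecting the configuration in a horizontal plane and reapplying the nonexistence input), is to check that $M\cap\{x=c_{0}\}$ is in fact bounded, not merely bounded above, so that the slice is compact and the components of $M\cap\{z>0\}$ behave as asserted in Step 1.
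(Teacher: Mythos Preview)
The statement you are attempting to prove is not proved in this paper at all: it is quoted in a Remark as Proposition~9.1 of \cite{GMM22}, and the paper offers no argument for it. There is therefore no ``paper's own proof'' to compare against.

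More seriously, your proposal does not prove the stated theorem. The hypotheses of the GMM22 result are \emph{finite genus, finite entropy, and one end}; nowhere do you invoke any of these. Instead, at the very first line you assume that $M\cap\{x=c_0\}$ is bounded above. That assumption is \emph{not} part of the GMM22 statement --- it is the hypothesis of Theorem~\ref{th:narrow-slabs}, a different (and related) theorem which \emph{is} proved in this paper. You have conflated the two results. To prove the GMM22 theorem along your lines you would first have to show that finite entropy, finite genus and one end force some vertical slice to be bounded above, and you give no indication of how to do that.

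Even read as an attempt at Theorem~\ref{th:narrow-slabs}, Step~2 does not work. Your barrier is a grim reaper depending on $x$, living over a strip $\{|x-a|<\pi/2\}$, while $N$ lives in the half-space $\{x>c_0\}$ and extends to $x\to+\infty$; the grim reaper goes to $+\infty$ on the walls $x=a\pm\pi/2$, so the sweepout is obstructed by the parts of $N$ outside that strip and no uniform bound $N\subset\{z\le Z_1\}$ follows. (Indeed, the paper's Proposition~\ref{prop: quarter-slab} gives only the \emph{linear} bound $z\le c+\lambda(x-a)$, not a uniform one.) The paper's proof of Theorem~\ref{th:narrow-slabs} uses exactly that linear bound and then a \emph{Scherk-like} translator (Proposition~\ref{prop: Scherk-translator}), whose $+\infty$ boundary values on the $x$-sides and $-\infty$ values on the $y$-sides make the maximum-principle comparison go through on a fixed rectangle; your horizontal-translate-and-limit scheme, by contrast, is left vague and does not reach a contradiction.
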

\noindent We  were recently  informed that in  \cite{DE-NM-MR}, D. Impera, N. M\o ller and M. Rimoldi  obtain, by different methods,   results related to Theorem~\ref{th:narrow-slabs} for  complete translators  in a slab. They assume  finite entropy, and  height  satisfying a
 linear-growth condition  dependent on the width of the slab. 

\end{remark}
\begin{remark} In Theorem~\ref{th:narrow-slabs}, we do not know whether the assumption that $M\cap\{x=c\}$ is bounded above for some $c$ can be removed.  Imagine
a connected annular surface that is asymptotic to two  parallel vertical planes, and looks like two planes connected by a small, catenoid-like, neck. If the planes are at a distance  $b$ from each other, then a maximum-principle argument using the grim reaper surface will show that $b<\pi$. However, we do not know at the present time how to rule out the existence of such a surface when the planes are very close together (see Fig. \ref{fig:non}.) Such a surface, if it exists, would lie in a slab of width less than $\pi$ and  
$M\cap\{x=c\}$ would be unbounded for any value of $c$.

\end{remark}
 \begin{figure}[htbp]
\begin{center}
\includegraphics[height=4.5cm]{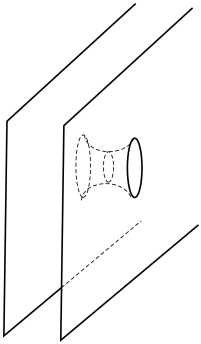}
\caption{\small Two vertical planes glued by a small catenoidal neck. We conjecture that such a translator does not exist.}
\label{fig:non}
\end{center}
\end{figure}

  The paper is organized as follows. In Section~\ref{sec:annuloids}, we define annuloids and state the main existence theorem for the
    family $\Aa$ of annuloids that we construct as limits of finite surfaces with boundary. In Sections~\ref{sec:graphs} and \ref{sec:DeltaWings}, we review the theory of complete translating graphs with special emphasis on $\Delta$-wings  as models for the construction of the annuloids $\Aa$. In Section~\ref{sec:compact-annuli} we outline the proof of the existence of the compact annular surfaces with boundary whose limits in Section~\ref{sec:Aa} are the complete annular translators in $\Aa$.  In Section~\ref{sec:proofs}, we provide the proofs of Theorems~ \ref{th:gap3}, \ref{th:bounded-by-lines}, and \ref{th:narrow-slabs}, above.

 \section{Annuloids}\label{sec:annuloids}
 In \cite{annuloids}
we  construct a two-parameter family of 
complete embedded annular surfaces that we call annuloids. 
\begin{definition} \label{def: annuloid} An {\bf annuloid}  is  a properly embedded translator $M$ such that
\begin{enumerate}
\item $M$ is an annulus.
\item $M$ lies in a slab $\{|y|\le B'\}$.
\item $M$ is symmetric with respect to reflection in the vertical coordinate planes.
\item $M+(0,0,z)$ converges as $z\to \infty$ to four planes $\{y=\pm b\}$ and $\{y=\pm B\}$ for some $0<b\le B$.
\item $M - (0,0,z)$ converges as $z\to\infty$ to the empty set.
\item $M$ is disjoint from the $z$-axis $Z$.

\end{enumerate}
We define the {\bf width} of $M$ to be the number $B=B(M)$. (One can prove that $B$ is also the smallest $B'$ such that $(2)$ holds.)   We define the {\bf inner width} of $M$ to be the number $b=b(M)$.\end{definition}

To state the main theorem below precisely, one needs to specify a notion of necksize of an annulus. There are various natural definitions, such as:
 the length of the shortest homotopically nontrivial curve in $M$;
 the radius of the smallest ball containing a nontrivial curve in $M$; the radius of the smallest vertical cylinder
containing a nontrivial curve in $M$.  Our existence result is true for any of those definitions.
However, the following turns out to be most convenient notion of necksize:

\begin{definition}\label{x(M)}
If $M$ is a surface, we let $x(M)$ be the distance from  the $Z$ axis to $M\cap \{y=0\}$. We refer to $x(M)$ as the {\bf necksize of $M$}.
\end{definition}

In \cite{annuloids}, we prove the existence of a collection of annuloids that behaves like a two-parameter family:
\begin{theorem}\label{annuloids Aa}
There exists a family $\Aa$ of annuloids with the property that the map 
$$\Aa\rightarrow [\pi/2,\infty)\times(0,\infty)$$
given by $M\mapsto (b(M), x(M))$ is continuous, proper, and subjective.  Furthermore, for fixed $b\geq\pi/2$, 
$$\Aa_b:=\{M\in \Aa\,:\, b(M)=b\}$$ contains a closed and connected subset   $\Cc_b$ on which the map 
\begin{align*}
\Cc_b&\rightarrow (0,\infty)\\
M &\mapsto x(M)
\end{align*}
 is continuous and surjective. 
\end{theorem}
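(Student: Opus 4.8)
\textbf{Proof plan for Theorem~\ref{annuloids Aa}.}
The plan is to obtain the annuloids in $\Aa$ as limits of the compact annular translators constructed in Section~\ref{sec:compact-annuli}, and to deduce the stated properties of the parameter map from the corresponding properties of the compact family together with the gap-type obstruction in Proposition~\ref{prop:gap} (equivalently, the phenomenon behind Theorem~\ref{th:gap3}). First I would fix the compact model: for suitable boundary data depending on two parameters, one has a compact embedded annular translator with boundary, symmetric in the vertical coordinate planes, whose geometry is controlled by (i) an ``outer'' parameter governing the pair of boundary strips, which in the limit becomes $b$, and (ii) a ``necksize'' parameter, which in the limit becomes $x(M)$. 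The first step is to set up this compact two-parameter family $\{M_{s,t}\}$ with uniform local area and curvature bounds away from the boundary, so that Brakke/White compactness and the maximum principle give subsequential smooth limits that are complete embedded translators.

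The second step is to show the limits are genuine annuloids, i.e.\ that conditions (1)--(6) of Definition~\ref{def: annuloid} survive the limit. Properness and embeddedness pass to the limit by the standard monotonicity/sheeting arguments; the symmetry (3) is automatic since each $M_{s,t}$ has it. The topological statement (1), that the limit is an annulus rather than a plane or a surface of higher genus, is where one must work: I would use a barrier/linking argument together with the catenoid-like neck in the compact models to force a nontrivial loop to persist, and use the gap estimate to prevent the neck from pinching off or the two ends from being joined trivially. Conditions (4), (5), and (6) — the asymptotics to four vertical planes $\{y=\pm b\},\{y=\pm B\}$ as $z\to\infty$, the escape to $\emptyset$ as $z\to-\infty$, and disjointness from $Z$ — I would extract from the corresponding features of the compact models by a diagonal argument, controlling the planes $\{y=\pm b\}$ via the parameter $s$ and using the grim-reaper comparison (as in Theorem~\ref{th:bounded-by-lines}) to pin down $B$ and to show $b\ge\pi/2$, which explains the lower endpoint of the interval $[\pi/2,\infty)$.

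The third step is the behavior of the map $M\mapsto(b(M),x(M))$. Continuity is a compactness-plus-uniqueness statement: any sequence $M_n\to M$ in $\Aa$ has $b(M_n)\to b(M)$ because the asymptotic planes vary continuously, and $x(M_n)\to x(M)$ because $M\cap\{y=0\}$ is transverse to $\{y=0\}$ and the convergence is smooth near that curve. Properness: if $(b(M_n),x(M_n))$ stays in a compact subset of $[\pi/2,\infty)\times(0,\infty)$, then the curvature and area bounds prevent $M_n$ from degenerating, so a subsequence converges to an annuloid, giving properness. Surjectivity is the heart of the matter and I expect it to be the main obstacle: one must realize every pair $(b,x)\in[\pi/2,\infty)\times(0,\infty)$. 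I would argue by a degree/connectedness (intermediate-value) argument: in the compact family, vary $t$ while $s$ is fixed so that the necksize ranges over all of $(0,\infty)$ — small $t$ gives a thin neck near $Z$, and as $t$ grows the neck moves outward without bound — and pass to the limit, using properness to guarantee the limit does not escape $\Aa$. The subtlety is that $b(M)$ might a priori jump along such a path; here one invokes Proposition~\ref{prop:gap} to rule out the degeneration in which the inner pair of planes collapses onto the outer pair or the surface leaves the class, which keeps $b$ pinned to the chosen value and yields a continuous, hence surjective, map on each slice. Finally, for fixed $b\ge\pi/2$, the set $\Cc_b$ is taken to be the connected component (or the closure of one) of $\Aa_b$ containing the image of such a monotone path in $t$; closedness follows from properness of the big map restricted to the fiber, connectedness is built in by construction, and surjectivity of $M\mapsto x(M)$ on $\Cc_b$ is exactly the intermediate-value argument along that path. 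I expect the delicate points to be (a) proving the limit is topologically an annulus and not a plane, and (b) the surjectivity in the $x$-variable together with the claim that $b$ does not degenerate along the relevant compact-family paths; both reduce to careful use of the gap principle and grim-reaper barriers.
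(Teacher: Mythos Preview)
Your overall strategy matches the paper's: $\Aa$ is indeed built as the set of smooth limits of compact annular translators bounded by pairs of nested rectangles (Theorem~\ref{th:a,b,x}, Proposition~\ref{prop:Mbx}, Definition~\ref{def:Aa}), with the gap principle (Proposition~\ref{prop:gap}, Theorem~\ref{th:gap3}) forcing $z_i\to\infty$ and hence completeness when $b\ge\pi/2$. However, two points in your plan are miscalibrated. First, you worry that $b(M)$ might jump along the path and propose to rule this out with the gap estimate. In the paper's setup $b$ is not an emergent quantity to be controlled a posteriori: it is the prescribed half-width of the \emph{inner} rectangle $\Gamma_{a,b}$, held fixed while $a\to\infty$, and the inner width $b(M)$ of the limit equals this input parameter by construction. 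The gap theorem is used elsewhere---to show properness of the path-lift (Proposition~\ref{prop:proper}) and to force $z_i\to\infty$---not to stabilize $b$. Second, Theorem~\ref{th:bounded-by-lines} is not what pins down $B$ or the lower bound $b\ge\pi/2$; the lower bound on $b$ comes from the completeness argument in the proof of Proposition~\ref{prop:Mbx} (if $b<\pi/2$ the limits are incomplete grim reaper pieces as in Proposition~\ref{prop:bounded-limit}), and $B$ is the limit of the outer-rectangle half-widths $B(M_i)$, constrained to $(b,b+\pi)$ already at the compact level.

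On the tools side, two ingredients the paper relies on do not appear in your plan. Surjectivity of the necksize on the compact family is not obtained by an ad hoc intermediate-value argument but via the path-lifting theorem (Theorem~\ref{th:lifting}): the preimage over a transverse path $\Gamma(t)$ of boundary pairs is a smooth $1$-manifold, proper by the gap theorem, with one end collapsing to $\Gamma$ (graphical, $x\to x(\Gamma)$) and the other to $D(\Gamma)$ with multiplicity two ($x\to 0$); see Theorem~\ref{th:Ff} and Theorem~\ref{both}. This is what produces, for each $(a,b)$, a connected family with $x$ sweeping $(0,a)$, and hence after $a\to\infty$ the connected slice $\Cc_b$ with $x$ sweeping $(0,\infty)$. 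For the asymptotics in condition~(4) of Definition~\ref{def: annuloid} and for items (iii)--(iv) of Theorem~\ref{th:Aa-as-limit}, the mechanism is not a direct barrier argument but the minimal-foliation-function machinery: one proves the uniform bounds $\mathsf{N}(F_\vv|M)\le 2$ and $\mathsf{N}(H|M)\le 8$ (Proposition~\ref{prop:N-estimates}) and then uses lower semicontinuity (Theorem~\ref{semicontinuity-theorem}) together with Lemmas~\ref{lem:graphical} and~\ref{lem:N(graph)>0} to force translated limits to be unions of vertical planes (for vertical translation) or tilted grim reapers (for translation along the wings). Finally, note that the paper itself only outlines the construction and defers the full verification that the limits are annuloids, as well as the continuity/properness/surjectivity statements, to~\cite{annuloids}; your identification of (a) the topology of the limit and (b) surjectivity in $x$ as the delicate points is accurate.
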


\noindent The family  $\Aa$ is defined in Definition~\ref{def:Aa}, which refers to Proposition~\ref{prop:Mbx} and Theorem~\ref{th:a,b,x}.
\begin{corollary}\label{corollary-1}
For each $b\ge \pi/2$ and for each $0<s< \infty$,
there exists an annuloid in $\Aa$ with inner width $b$ and necksize $s$.
\end{corollary}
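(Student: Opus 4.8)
The plan is to read this off directly from Theorem~\ref{annuloids Aa}. Fix $b\ge\pi/2$ and $s\in(0,\infty)$. By that theorem the map $\Aa\to[\pi/2,\infty)\times(0,\infty)$, $M\mapsto(b(M),x(M))$, is surjective, so there exists some $M\in\Aa$ with $b(M)=b$ and $x(M)=s$. Since every member of $\Aa$ is an annuloid, $M$ is an annuloid; its inner width is $b(M)=b$ by Definition~\ref{def: annuloid}, and its necksize is $x(M)=s$ by Definition~\ref{x(M)}. This is exactly the assertion of the corollary.

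Alternatively, and using slightly more of the theorem, one can work within the distinguished subset $\Cc_b\subset\Aa_b$: the restriction $\Cc_b\to(0,\infty)$, $M\mapsto x(M)$, is surjective, so one chooses $M\in\Cc_b$ with $x(M)=s$; then $M\in\Cc_b\subset\Aa_b$ forces $b(M)=b$, and again $M$ is the desired annuloid. Either route is immediate, so there is no real obstacle at this stage. All of the substance lives in the construction of the family $\Aa$ and in the verification of its properties --- continuity, properness, and surjectivity of $M\mapsto(b(M),x(M))$, together with the existence of the closed connected slices $\Cc_b$ --- which is the content of Theorem~\ref{annuloids Aa} (and ultimately of the compactness and degree-theory arguments behind it, sketched in Sections~\ref{sec:annuloids}--\ref{sec:Aa}). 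The corollary merely specializes the surjectivity to a single prescribed point $(b,s)$ of the parameter rectangle $[\pi/2,\infty)\times(0,\infty)$.
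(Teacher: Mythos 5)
Your proof is correct and is exactly the intended reading: the corollary is an immediate restatement of the surjectivity in Theorem~\ref{annuloids Aa} (the word ``subjective'' there is a typo for ``surjective''), and the paper itself offers no further argument. Both of your routes --- via the full surjectivity of $M\mapsto(b(M),x(M))$, or via the surjectivity of $x(\cdot)$ on the slice $\Cc_b$ --- are valid and equivalent in substance.
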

\begin{figure}[htbp]
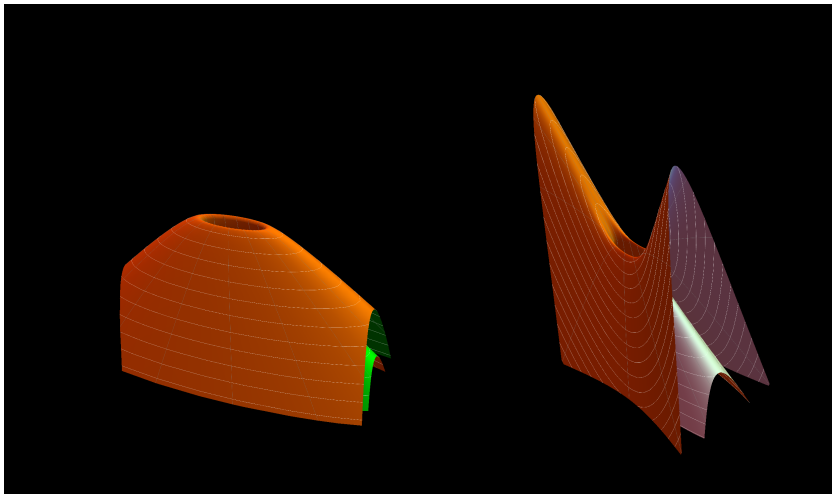

\begin{center}
\includegraphics[height=6.5cm]{spruck-capped.png}\includegraphics[height=6.5cm]{spruck-uncapped-2.png}
\caption{\small A {\bf capped} annular translator in $\Aa$ (left) and an {\bf uncapped} one (right). When $B>b$ (see 
  Definition~\ref{def: annuloid}), the annuloid is uncapped.
When $B=b$, the annuloid may be capped or uncapped. The transition from capped to uncapped is addressed in \cite{annuloids}.}
\label{fig:capped-uncapped}
\end{center}
\end{figure}

Our discovery of the annuloids in Theorem~\ref{annuloids Aa} was guided by our construction of $\Delta$-wings
in  \cite{graphs}  as limits of 
translates of graphs over finite rectangles in the plane.  We produce annuloids as limits of 
 compact, annular translators whose  boundaries consist of pairs of symmetrically placed nested rectangles in a horizontal plane. Our existence and uniqueness proof for 
 $\Delta$-wings relies heavily on  the advances made in the paper  by Spruck and Xiao~\cite{spruck-xiao}, and the techniques there appear in parts of our construction of annuloids.

To our knowledge, the only properly embedded, annular translators that were known before \cite{annuloids} are the rotationally invariant surfaces called {\bf translating catenoids} (Figure~\ref{fig:catenoid}), discovered by 
Clutterbuck, Schn\"urer and Schulze \cite{CSS} (see also \cite{Altschuler-Wu}). They showed that there is a one-parameter family $\{W(\lambda)\}_{\lambda>0}$ of such surfaces.   The parameter $\lambda$ 
is the radius of the neck circle. It coincides with our definition of necksize  for annuloids. They also examined the limit as  $\lambda\rightarrow 0$ of $W(\lambda)$. It consists of two superimposed copies of an entire, rotationally invariant graph known as the bowl soliton. (See Section~\ref{sec:graphs}  and Figure~\ref{fig:catenoid}.) The convergence is smooth away from the point on the axis of symmetry where the neck collapses. 

As part of our investigation of  the annuloids in $\Aa$, we were able to show that if $b$ is fixed and $s$ goes to zero, the associated annuloids in Corollary~\ref{corollary-1} converge  (with multiplicity two) to the $\Delta$-wing defined over the strip $\RR\times (-b,b)$. 
(See Figure~\ref{fig:DeltaWing}.) The convergence is smooth away from the point where the $Z$ axis intersects the  $\Delta$-wing. We conjecture the following behavior as $b\rightarrow \infty$:

\begin{conjecture} Fix $s$  and let $M_i$ be an annuloid in $\Aa$, as in Corollary~\ref{corollary-1} with  necksize  $s$ and inner width $b_i\rightarrow\infty$. Then (after suitable vertical translations) the $M_i$ converge to the rotationally symmetric translating catenoid $W(s)$, whose neck is a circle of radius $s$.

\end{conjecture}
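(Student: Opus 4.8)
The plan is to run a geometric compactness argument and then identify the limit; the decisive step relies on a classification statement that is not yet available, which is why this is stated as a conjecture rather than a theorem. First I would normalize each $M_i$ by a vertical translation so that the point of $M_i\cap\{y=0\}$ closest to $Z$---which by the reflection symmetries of Definition~\ref{def: annuloid} is $(x(M_i),0,z_i)=(s,0,z_i)$ for suitable $z_i$, with $x(M_i)$ as in Definition~\ref{x(M)}---is moved to the origin. Then $(s,0,0)\in M_i$ for every $i$, and each $M_i$ is still symmetric across $\{x=0\}$ and $\{y=0\}$. Regarding the $M_i$ as minimal surfaces for the Ilmanen metric $e^{-z}\delta_{ij}$, the first task is to obtain uniform local area bounds on bounded subsets of $\RR^3$; White's compactness and regularity theory for embedded minimal surfaces then yields uniform curvature bounds and hence, along a subsequence, smooth $C^\infty_{\mathrm{loc}}$ convergence $M_i\to M_\infty$ to a complete, properly embedded translator that is symmetric across the two vertical coordinate planes and contains $(s,0,0)$. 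The area bounds, and the confinement of the ``inner'' part of $M_i$ (the part near $Z$) to a fixed slab $\{z\ge -C\}$ and to a fixed solid cylinder at bounded heights, should come from the maximum principle using translating catenoids $W(\lambda)$ and grim reaper surfaces as barriers, together with conditions $(4)$ and $(5)$ of Definition~\ref{def: annuloid}.

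Next one shows that $M_\infty$ is nondegenerate and has the expected topology and ends. A multiplicity-$k$ planar limit is excluded: by the symmetry $x\mapsto -x$ such a plane would have to be $\{x=0\}$, which misses $(s,0,0)$, and $M_\infty$ is connected once the inner part of $M_i$ is known to be trapped between barriers with caps at bounded height; in particular the convergence near $(s,0,0)$ should be with multiplicity one, so $M_\infty$ genuinely has a neck there. Because the four wings of $M_i$ are asymptotic to the vertical planes $\{y=\pm b_i\}$ and $\{y=\pm B_i\}$ with $b_i\to\infty$, these wings leave every compact set---the ``splitting'' of the inner bowl into wings occurs only around height $\sim b_i^2$---so $M_\infty$ should have exactly two ends, coming from the inner, bowl-like part of $M_i$, each asymptotic to a translated bowl soliton, precisely as are the two ends of $W(s)$.

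The hard part will be to conclude that $M_\infty=W(s)$. The cleanest route is to prove that $M_\infty$ is rotationally symmetric about $Z$ and then invoke the classification of rotationally symmetric translators of Clutterbuck--Schn\"urer--Schulze \cite{CSS}: a complete, embedded, non-graphical, rotationally symmetric translator is one of the $W(\lambda)$, and the necksize normalization forces $\lambda=s$. Rotational symmetry is plausible because $e^{-z}\delta_{ij}$ is invariant under rotation about any vertical axis, so an Alexandrov moving-plane argument across vertical planes through $Z$ in all horizontal directions is available in principle; the main obstacle is that making it run requires sufficient asymptotic control on the two ends of $M_\infty$---that they be asymptotically rotationally symmetric bowl ends, in symmetric position---and this is exactly where the hypothesis $b_i\to\infty$ must be exploited, since it is what strips away the slab constraint and forces the limiting ends to be catenoidal rather than wing-like. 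An alternative, and equally the crux, would be a direct classification of complete, properly embedded, genus-zero, two-ended translators that are symmetric across two orthogonal vertical planes and disjoint from a vertical line, showing these to be exactly the family $\{W(\lambda)\}_{\lambda>0}$; to our knowledge no such classification is presently available. One must also verify en route that the neck radius of $M_i$ in the plane $\{x=0\}$ likewise tends to $s$, so that the limiting neck is the round circle of radius $s$; this should be automatic once rotational symmetry of $M_\infty$ is in hand.
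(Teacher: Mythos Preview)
The paper states this as a \emph{conjecture} and provides no proof, so there is no argument in the paper to compare your proposal against. You correctly recognize this and frame your proposal as a plausible route rather than a complete argument, and you correctly locate the crux: one needs either an Alexandrov moving-plane argument giving rotational symmetry of the limit $M_\infty$, or a direct classification of complete, properly embedded, genus-zero, two-ended translators with two orthogonal vertical reflection symmetries. Neither is presently available, which is exactly why the authors leave this as a conjecture.

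A few of your preliminary steps would also need tightening before the crux is even reached. Your exclusion of planar limits is not correct as stated: the reflection symmetries do \emph{not} force a limiting vertical plane through $(s,0,0)$ to be $\{x=0\}$; for instance, the symmetric pair $\{x=\pm s\}$, or the single plane $\{y=0\}$, are both consistent with the symmetries and contain $(s,0,0)$. Your assertion that the four wings escape every compact set because the inner bowl ``splits only around height $\sim b_i^2$'' is a heuristic drawn from bowl-soliton asymptotics; turning it into a proof requires quantitative control on where $M_{\text{lower}}$ and $M_{\text{upper}}$ separate, and the paper does not supply such estimates uniformly in $b$. Finally, the curvature bound in Remark~\ref{rmk:area-estimate} degenerates near the axis $Z$, so smooth subsequential convergence should only be asserted on compact sets disjoint from $Z$; this matters for your claim that the limiting neck is genuinely a circle of radius $s$.
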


\section{Translating Graphs}\label{sec:graphs}
The simplest complete translators  are vertical planes. In \cite{CSS}, J. Clutterbuck, O. Schn\"urer and F. Schulze  (see also \cite{Altschuler-Wu}) 
proved that there is a unique (up to vertical translation) entire, rotationally invariant
function $u: \RR^2\to \RR$ whose graph is a translator.
It is called the {\bf bowl soliton}~(Fig.~\ref{fig:catenoid}).

The simplest, non-entire, complete graph over a region in $\{z=0\}$ is the cylinder constructed over the grim reaper curve, 
$$u_{\pi/2}(x,y)=\log (\cos y),$$
 $y \in (-\pi/2,\pi/2)$, $x\in\RR$.  We refer to this graph as the {\bf grim reaper surface}. It  can be tilted and dilated to obtain  a complete translating graph over a strip of width $2 b \geq \pi$:

 $$u_b:\RR\times (-b,b)\rightarrow \RR$$
\begin{equation}\label{TGR2}
  (x,y) \mapsto \left(\frac{2 b}{\pi}\right)^2 \log \left(\cos \left(\frac{y \pi}{2 b}\right)\right) + x \tan(\theta),   
  \end{equation}
where $\tan(\theta)=\sqrt{(2 b/\pi)^2-1}$. 
We call this graph a {\bf tilted grim reaper surface}.
(The slope of the graph of  $u_b$ is  $\tan(\theta)$. Of course, the graph of $u_b(-x,y)$ is also a translator. We refer to this graph as being {\em negatively tilted}.)

\begin{remark}\label{rmk:flat-implies GR} A tilted grim reaper surface $M$ is a cylinder, so its Gauss curvature is identically equal to zero.  Along a straight line on the surface, the Gauss map is constant: the Gaussian image of $M$  is a half circle in the upper hemisphere. The half circles
corresponding to the tilted grim reaper surfaces foliate the upper hemisphere. (We include the grim reaper surface in this collection as a tilted grim reaper surface with tilt angle $0$.) By Massey's theorem, \cite{Massey}
a complete surface in $\RR^3$ whose Gauss curvature is identically $0$ is a cylinder. Given $\Sigma$, a complete translating graph with Gaussian curvature identically equal to $0$, if $L$ is a line on $\Sigma$ we can find a tilted grim reaper surface $M$ such that (after suitable translation and rotation) $M$ and $\Sigma$ are tangent along $L$. By Cauchy-Kowalevski,  $\Sigma=M$.  So, up to translation and rotation the complete, flat, translating graphs are tilted grim reaper surfaces. In particular, there are no complete, flat, translating graphs defined over  strips of width less than $\pi$.
\end{remark}

 In \cite{spruck-xiao}, Spruck and Xiao proved that a  complete  translating graph
has nonnegative Gauss curvature: $K\geq 0$. But since
$k_1/H$ satisfies a strong maximum principle on translators \cite{white-nature} it follows that if K=0 anywhere on a translating graph then  $K\equiv 0$. (Here, $0\leq k_1\leq k_2$ are the principal curvatures and $H=k_1+k_2$.)  
Therefore, in order to classify complete translating graphs, it suffices to classify the complete translating graphs with positive Gauss curvature.
 The bowl soliton is not a cylinder so its Gauss curvature must be strictly positive. (This also follows directly from a computation.)  Do other examples exist?
Spruck and Xiao (\cite{spruck-xiao}*{Theorem~1.5})
showed that such surfaces, if they are graphs over strips, are reasonably well behaved.
\begin{proposition} \label{strictly convex graph} A complete translating graph
$$ u:\RR\times (-b,b)\rightarrow \RR$$
with $K>0$ satisfies $u(x,-y)=u(x,y)$. Furthermore,
$$u(x+t,y)-u(t,0)$$ converges smoothly as $t\rightarrow-\infty$, to a tilted grim reaper  surface \eqref{TGR2} defined over the strip of width $2b$ and, as $t\rightarrow \infty$, to the negatively-tilted grim reaper surface of the same width. In particular, $b>\pi/2$. 
\end{proposition}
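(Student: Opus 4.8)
The plan is to deduce all four assertions from an analysis of the $x$-translates of $M=\graph(u)$. Since we are assuming $K>0$, the surface $M$ is locally strictly convex, and by Ilmanen's observation \cite{ilmanen_1994} it is, for the metric $g=e^{-z}\delta_{ij}$, a complete minimal graph over the strip $\Omega=\RR\times(-b,b)$ with $u\to-\infty$ at $\partial\Omega$. The skeleton is: (i) establish $C^{\infty}$ a priori estimates for translating graphs over $\Omega$ that are uniform in the $x$-direction; (ii) extract $C^{\infty}_{\mathrm{loc}}$ limits of the normalized translates $u^{t}(x,y):=u(x+t,y)-u(t,0)$ as $t\to\pm\infty$; (iii) show that every such limit is flat, hence a tilted grim reaper surface \eqref{TGR2} of width $2b$; (iv) upgrade subconvergence to convergence and read off the symmetry and the inequality $b>\pi/2$.

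For (i), one combines interior gradient estimates for minimal graphs in the smooth metric $g$ (available since $\langle\nu,\ee_3\rangle>0$ on a graph) with the invariance of $g$ under $x$-translation, obtaining $C^{\infty}$ bounds for $u$ on each sub-slab $\{|y|\le b-\eta\}$ that are independent of $x$. One also needs, near $y=\pm b$, a lower barrier obtained by comparison with a slightly dilated and $x$-translated grim reaper surface, so that the divergence $u\to-\infty$ at the two edges is quantitatively uniform in $x$; this is what keeps a translation limit from being a graph over a strictly narrower strip. Granting this, for any $t_i\to+\infty$ a subsequence of $u^{t_i}$ converges in $C^{\infty}_{\mathrm{loc}}(\Omega)$ to a complete translating graph $u^{\infty}$ over $\Omega$ with $u^{\infty}(0,0)=0$ and $K\ge0$, and similarly for $t_i\to-\infty$.

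Step (iii) is the heart of the matter. Since $M$ is a graph with $K>0$, its Gauss map $\nu\colon M\to S^{2}_{+}$ is a local diffeomorphism; since $M$ is moreover a complete locally strictly convex surface, the classical theorems of van Heijenoort and Sacksteder imply that $M$ is embedded as the boundary of a convex body, so $\nu$ is injective and $\int_{M}K\,dA=\area\bigl(\nu(M)\bigr)\le 2\pi<\infty$. The part of $M$ over which $u^{t_i}$ restricted to $\{|x|<R\}$ is graphed is $M\cap\{t_i-R<x<t_i+R\}$, a region that escapes to the end $x\to+\infty$, so its Gauss-curvature mass tends to $0$; by Fatou's lemma, $K\equiv0$ on $u^{\infty}$. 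Now a complete flat translating graph over $\Omega$ is a cylinder (by Massey's theorem, cf.\ Remark~\ref{rmk:flat-implies GR}) whose rulings are infinite lines lying in the slab $\{|y|<b\}$ and hence have horizontal projection parallel to the $x$-axis; it is therefore the graph of $mx+v(y)$ for a constant $m$ and a function $v$, and the translator equation reduces to an ODE for $v$ whose only solutions, up to an additive constant, are the functions $u_b(\pm x,y)$ of \eqref{TGR2}. In particular $2b\ge\pi$. (The dichotomy ``$u^{\infty}$ is strictly convex or flat'' also follows from the strong maximum principle for $k_1/H$ \cite{white-nature}, but one still has to rule out the strictly convex alternative, and that is exactly what the total-curvature estimate does.) I expect the main obstacle to be the injectivity of $\nu$ --- equivalently, the embeddedness of the Gauss image of $M$ --- which is essentially the convexity analysis of Spruck and Xiao \cite{spruck-xiao}.

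It remains to carry out (iv). First, $K>0$ gives $\det D^{2}u>0$, so $D^{2}u$ is definite, and by continuity and connectedness of $\Omega$ it has a fixed sign; since $u\to-\infty$ at $\partial\Omega$, the restriction of $u$ to a vertical segment $\{x=c\}\cap\Omega$ is concave, so $D^{2}u$ is negative definite and $u$ is concave. In particular $\partial_x u(\cdot,y)$ is strictly decreasing in $x$ for each $y$, so the tilt of the $x\to-\infty$ end of $M$ strictly exceeds that of its $x\to+\infty$ end. The complete flat translating graphs over $\Omega$ normalized to vanish at $(0,0)$ are exactly $u_b(x,y)-u_b(0,0)$ (tilt $+\tan\theta$) and $u_b(-x,y)-u_b(0,0)$ (tilt $-\tan\theta$); the $\omega$-limit set of $t\mapsto u^{t}$ as $t\to+\infty$ is connected (by continuity of $t\mapsto u^{t}$ and the compactness from (i)), hence is one of these two surfaces, and likewise for $t\to-\infty$, so $u^{t}$ genuinely converges at $\pm\infty$; the ordering of the tilts then forces the $x\to-\infty$ limit to be $u_b(x,y)-u_b(0,0)$ and the $x\to+\infty$ limit to be $u_b(-x,y)-u_b(0,0)$, as asserted. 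For the symmetry $u(x,-y)=u(x,y)$, apply the Alexandrov moving-plane method to the reflections $y\mapsto 2c-y$, $c\in(0,b)$: $g$ is invariant under these reflections and under $x$-translation, and the asymptotics just established --- uniform convergence of $M$, as $x\to\pm\infty$, to the $y$-symmetric tilted grim reapers --- supply exactly the control at the noncompact ends needed to initiate the reflection and sweep $c$ down to $0$. Finally, if $2b=\pi$ then both one-sided limits are the flat grim reaper surface $u_{\pi/2}$, and sandwiching $M$ between this surface and its vertical translates (equivalently, uniqueness of the translating graph over the strip of width $\pi$) forces $M=\graph(u_{\pi/2})$, contradicting $K>0$; hence $2b>\pi$, i.e.\ $b>\pi/2$.
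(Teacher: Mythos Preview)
The paper does not supply its own proof of this proposition: it is quoted as \cite{spruck-xiao}*{Theorem~1.5} and no argument is given, so there is no in-paper proof to compare your attempt against.

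That said, your outline is correct and is essentially the Spruck--Xiao strategy. The core mechanism---$K>0$ plus completeness, via Sacksteder/van~Heijenoort, makes $M$ the boundary of a convex body, so the Gauss map embeds $M$ in the open upper hemisphere and $\int_M K\,dA\le 2\pi$; hence the $x$-translates carry vanishing total curvature in the limit and, having $K\ge0$, must be flat, i.e.\ tilted grim reapers---is the right one. A few remarks. First, the ``main obstacle'' you flag (injectivity of $\nu$) is not an obstacle here: $K>0$ is \emph{hypothesized}, so local strict convexity is immediate and Sacksteder applies directly; the hard Spruck--Xiao theorem is that $K\ge0$ always, and you are not invoking that. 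Second, concavity of $u$ is already a consequence of the convex-body conclusion, and the resulting strict monotonicity of $t\mapsto\partial_x u(t,0)$, together with $m_\pm\in\{\pm\tan\theta\}$, forces $\tan\theta>0$ and hence $b>\pi/2$ directly, so your final sandwiching argument is redundant (though not wrong). Third, the two places where your sketch is thinnest---the barrier guaranteeing the translation limit lives over the \emph{full} strip $(-b,b)$ rather than a narrower one, and the control at the noncompact ends needed to run moving planes for the $y$-symmetry---are genuine work, but you have correctly identified them as such, and they are carried out in \cite{spruck-xiao}.
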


\begin{remark}\label{remark: no-thin-graphs}
 From  the discussion above, we know  that there are no complete, flat, translating graphs defined over strips of width less than $\pi$.   From Proposition~\ref{strictly convex graph} it follows that there are no complete, translating graphs of any kind defined over such thin strips.
\end{remark}

What are these surfaces? 
Ilmanen, in an unpublished work, described a new family of translating graphs called $\Delta$-wings:
they are defined over strips, and they have strictly positive Gauss curvature at some points.
Hence, as argued in  the paragraph before Proposition~\ref{strictly convex graph},
  they have positive Gauss curvature at all points.
  With Ilmanen, we gave an existence proof for these surfaces and  proved the following classification theorem for graphical solitons:
 
\begin{theorem}[\cite{graphs}] \label{th:graphs}Up to isometries of $\RR^2$ and vertical translation, the only complete translating graphs in $\RR^3$ are the grim reaper surface, the tilted grim reaper surfaces, the $\Delta$-wings, and the bowl soliton.
\end{theorem}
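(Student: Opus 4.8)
The plan is to split according to the sign of the Gauss curvature, using the dichotomy recorded just before Proposition~\ref{strictly convex graph}: a complete translating graph has $K\ge 0$ by Spruck--Xiao \cite{spruck-xiao}, and since $k_1/H$ satisfies a strong maximum principle on translators, either $K\equiv 0$ or $K>0$ everywhere. The flat case is exactly Remark~\ref{rmk:flat-implies GR}: by Massey's theorem the surface is a cylinder, and matching a tilted grim reaper surface along a ruling and applying Cauchy--Kowalevski identifies it, up to an isometry of $\RR^2$ and a vertical translation, with one of the tilted grim reaper surfaces \eqref{TGR2} (the untilted member being the grim reaper surface). So it remains to treat a complete translating graph $u\colon\Omega\to\RR$ with $K>0$.

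The first substantive step is to pin down $\Omega$. Being locally strictly convex, the graph forces $\Omega$ to be a convex open subset of $\RR^2$, and the point is that only $\Omega=\RR^2$ and infinite strips actually occur. Here the Spruck--Xiao analysis is indispensable, supplemented by barriers built from the surfaces \eqref{TGR2}: near a boundary point of $\Omega$ at which $\partial\Omega$ has a tangent line the graph becomes asymptotically vertical, modeled on a grim reaper, so $\partial\Omega$ can contain no corner and no boundary arc of non-constant tangent direction --- hence $\partial\Omega$ is a union of straight lines --- and a half-plane is excluded because translating $M$ parallel to $\partial\Omega$ and passing to a subsequential limit (using the interior estimates available for translators) produces a translator invariant under those translations, i.e., the product of $\RR$ with a complete planar translator graph over a half-line, which does not exist since the grim reaper curve is defined only over a bounded interval. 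I expect this reduction, together with the $\Delta$-wing uniqueness below, to be the main obstacle.

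If $\Omega=\RR^2$ the graph is entire. By X.-J.~Wang's theorem an entire translator in $\RR^3$ is convex, hence in dimension two rotationally symmetric about a vertical axis --- alternatively one may obtain the symmetry by an Alexandrov moving-planes argument using the unique maximum of $u$ --- so $u$ solves the ordinary differential equation for radial translating graphs, whose only complete solution up to vertical translation is the bowl soliton \cite{CSS}.

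If instead $\Omega$ is a strip, an isometry of $\RR^2$ puts it in the form $\RR\times(-b,b)$, and Proposition~\ref{strictly convex graph} applies: $b>\pi/2$ (so no such surface exists over thin strips, cf.\ Remark~\ref{remark: no-thin-graphs}), $u(x,-y)=u(x,y)$, and $u(x+t,\cdot)-u(t,0)$ converges as $t\to-\infty$ to the tilted grim reaper surface of width $2b$ and as $t\to+\infty$ to the negatively tilted one. For each $b>\pi/2$, the existence of at least one such surface --- the $\Delta$-wing over $\RR\times(-b,b)$ --- is the construction carried out with Ilmanen in \cite{graphs}, realized as a limit of vertical translates of translating graphs over finite rectangles. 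For uniqueness, given two such graphs $u_1,u_2$ over the same strip I would normalize by a vertical translation so that their limits as $t\to-\infty$ agree; then $w=u_1-u_2$ is a bounded solution, tending to $0$ as $x\to-\infty$, of a linear second-order elliptic equation with no zeroth-order term (the linearization of the translator graph equation along $u_2$), and a comparison argument on the strip --- controlling $w$ near the two lateral edges via the common grim-reaper asymptotics of $u_1$ and $u_2$ and at the $x\to+\infty$ end via uniqueness of the negatively tilted grim reaper of width $2b$ --- forces $w\equiv 0$. Assembling the three cases yields exactly the list in the statement, and conversely each of these surfaces is a complete translating graph, which proves the theorem.
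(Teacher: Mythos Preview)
This theorem is not actually proved in the present paper; it is quoted from \cite{graphs}, and what the paper offers is only an outline of the strategy (the paragraph before Proposition~\ref{strictly convex graph}, Remark~\ref{rmk:flat-implies GR}, Proposition~\ref{strictly convex graph}, Remark~\ref{remark: no-thin-graphs}, and Section~\ref{sec:DeltaWings} through Theorem~\ref{th:DWing}). Your proposal follows that outline essentially verbatim: the Spruck--Xiao dichotomy $K\equiv 0$ versus $K>0$, Massey plus Cauchy--Kowalevski for the flat case, reduction of the domain in the strictly convex case to $\RR^2$ or a strip, identification with the bowl soliton in the entire case, and existence and uniqueness of the $\Delta$-wing in the strip case.

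Two places where you supply more than the paper does, and where some caution is warranted. First, the domain reduction (that $\Omega$ must be $\RR^2$ or a strip, with the half-plane excluded) is not carried out here; in \cite{graphs} the cleanest route is through X.-J.~Wang's theorem that a complete convex translator is either entire or contained in a slab, which you invoke only later. Your direct argument via boundary barriers and a translation limit is plausible, but note that your phrasing ``by X.-J.~Wang's theorem an entire translator in $\RR^3$ is convex'' inverts the logic: convexity is already in hand from $K>0$, and what Wang supplies is the dichotomy and, in the entire case, the rotational symmetry. Second, your uniqueness sketch for $\Delta$-wings via the linearized equation for $w=u_1-u_2$ is not the route indicated here; the paper says only that ``uniqueness is more complicated to prove, but it is true,'' and the argument in \cite{graphs} is more geometric. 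Your sketch would need real work near the lateral edges $y=\pm b$, where both $u_i\to-\infty$ and the coefficients of the linearized operator degenerate, so controlling $w$ there via ``common grim-reaper asymptotics'' is exactly the delicate point.
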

 \begin{figure}[htbp]\label{fig:DeltaWing}
\begin{center}
\includegraphics[height=4.5cm]{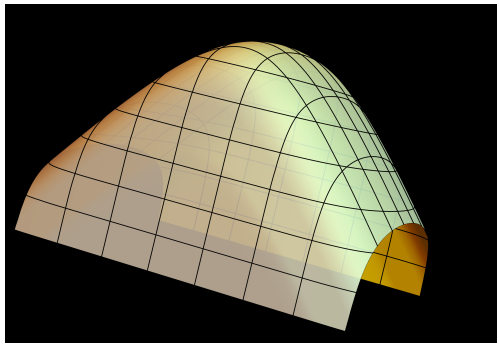}
\caption{\small A $\Delta$-wing.}
\label{fig:delta}
\end{center}
\end{figure}

In the next section, we will outline our construction  of $\Delta$-wings in order to set the stage for the  construction of the annular translators described in Sections~\ref{sec:compact-annuli} and \ref{sec:Aa}.
Here is what we know about them at this point.
\begin{enumerate}
\item They must have strictly positive curvature.
\item They are symmetric with respect to reflection in the coordinate plane $\{y=0\}$.
\item They are asymptotic to positively tilted grim reaper surfaces as $x\rightarrow -\infty$ and to negatively tilted grim reaper surfaces as  $x\rightarrow +\infty$.
Hence, they are bounded above.
\noindent

\end{enumerate}

\section{$\Delta$-wings: complete translating graphs that are limits of graphs over rectangles}\label{sec:DeltaWings}

Consider a rectangle $R$ in the  horizontal plane $\{z=0\}$. Define $D(R)$ to be the translating graph over $R$ with zero boundary values. 
(As observed in Section~\ref{sec:intro}, a surface $M\subset\RR^{3}$ 
is a translator if and only if
it is minimal with respect to the Riemannian metric $g_{ij} =  e^{-z}\delta_{ij}$.)

 For $0\leq t \leq 1$, let $g(t)$ be the metric $g(t)_{ij}=e^{-tz } \delta_{ij}$.  We may use the continuity method to find a 
graph that is a  $g(t)$-minimal surface with $0$ boundary values.  (If $t=0$, $g(0)$ is the standard euclidean metric, then $R$ itself is the required graph. The solutions are bounded below (by $0$) and above by a  bowl soliton.) Vertical translations of the $g$-minimal graph foliate $R\times \RR$. It then follows from the maximum principle for minimal surfaces that $D(R)$ is the unique graphical solution (indeed the unique compact $g$-minimal surface) with the same boundary as $D(R)$.

Let $R_{L,b}$ be the rectangle $[-L, L] \times [-b,b]\subset \{z=0\}\subset\RR^2$. It follows from uniqueness that $D(R_{L,b})$ is symmetric with respect to reflection in the coordinate planes $\{x=0\}$ and $\{y=0\}$. Denote by
\begin{equation}  \label{eqn:D(L,b)}
 u_{L,b} :    [ -L, L] \times [-b,b] \longrightarrow \RR
\end{equation}
 the function with  $0$ boundary values whose graph  is $D(R_{L,b})$.

{\bf Limits of the  disks $D(R_{L,b})$  as $L\rightarrow \infty$.} We want to produce complete graphical solitons by fixing $b>0$ and taking limits as $L\rightarrow\infty$ of the vertically-translated surfaces $$D(R_{L,b})-(0,0,u_{L,b}(0,0)).$$ 
 
 It is not hard to show that the maximum value of $u_{L,b}$ is achieved at $(0,0)$. Therefore these surfaces are bounded above by $0$ and contain the origin.  In order  to insure that one gets (subsequential) limits, we prove a gradient estimate 
for $u_{L,b}$:  the norm $|Du_{L,b}|$ is bounded, independent of $L$,  on compact subsets of $\RR\times(-b,b)$. Therefore subsequential limits exist and give examples of graphical translators passing through the origin and lying in $\{z\leq0\}$.

 Let $\Sigma$ be a limit translator. The surface $\Sigma$ is not complete unless $u_{L,b}(0,0)\rightarrow \infty$ in this subsequence. (By the maximum principle, if $L'>L$, then $u_{L',b}(x,y)> u_{L,b}(x,y)$ on the interior of $R_{L,b}$. Therefore  $u_{L,b}(0,0)$ is  a monotonically increasing as a function of $L$ and a limit exists (possibly infinite).) If, as $L\rightarrow\infty$, $u_{L,b}(0,0)$ is finite, the limit surface is bounded by parallel lines in a horizontal plane.  If  as $L\rightarrow\infty$, $u_{L,b}(0,0))\rightarrow \infty$, the limit surface is complete. In both cases, it is symmetric with respect to reflection in the vertical coordinate planes.
 \begin{figure}
 \begin{center}
\includegraphics[height=4.5cm]{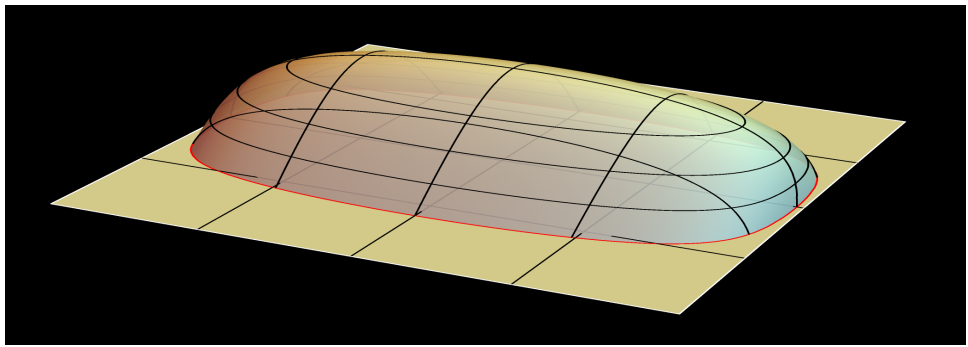}
\caption{\small A translating disk bounded by a convex planar curve.}
\label{fig:disk}
\end{center}
\end{figure}
 
\begin{proposition} [\cite{graphs}*{Corollary~3.3}]\label{prop:bounded-limit} Suppose $\lim_{L\rightarrow\infty}u_{L,b}(0,0)=C <\infty$. Then $b<\pi/2$. 

\end{proposition}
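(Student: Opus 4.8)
The plan is to argue by contradiction: suppose $b \ge \pi/2$ but $C := \lim_{L\to\infty} u_{L,b}(0,0) < \infty$. Under this hypothesis, the vertically-translated surfaces $D(R_{L,b}) - (0,0,u_{L,b}(0,0))$ already lie in $\{z \le 0\}$, pass through the origin, and (by the uniform gradient estimate on compact subsets of $\RR \times (-b,b)$) subconverge to a limit translator $\Sigma$. Because $u_{L,b}(0,0)$ converges to a finite constant while $u_{L,b}$ has zero boundary values on the segments $\{y = \pm b\}$, the limit $\Sigma$ is a translator whose boundary is the pair of parallel lines $\{y = \pm b\} \cap \{z = -C\}$, lying in the closed slab $\{|y| \le b\}$ and in $\{z \le 0\}$. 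Moreover, since each $D(R_{L,b})$ is a graph, $\Sigma$ is a graph over (a subset of) $\RR \times (-b,b)$; in particular it is simply connected, and each slice $\Sigma \cap \{x = c\}$ is a bounded curve (it is a graph over the bounded interval $\{c\} \times [-b,b]$).

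With $\Sigma$ in hand, I would invoke Theorem~\ref{th:bounded-by-lines} (after the harmless vertical shift sending $\{z = -C\}$ to $\{z = 0\}$, which does not affect the translator equation): $\Sigma$ is a connected translator in a slab $\{|y| < B\}$, bounded by two parallel lines in a horizontal plane, and it is simply connected (and has bounded vertical slices) — so the theorem applies and forces $b < \pi/2$, contradicting $b \ge \pi/2$. Alternatively, and more in the spirit of what is available at this point in the paper, one can argue directly with a grim reaper comparison: by Remark~\ref{remark: no-thin-graphs} / Remark~\ref{rmk:flat-implies GR} there is no complete flat translating graph over a strip of width less than $\pi$, but the relevant statement here needs the non-complete, bounded-by-lines version, which is exactly the content of Theorem~\ref{th:bounded-by-lines}. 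So the cleanest route is the appeal to Theorem~\ref{th:bounded-by-lines}.

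The main obstacle is justifying that the subsequential limit $\Sigma$ is genuinely a \emph{nonempty} translator with the stated boundary — i.e., that compactness is not vacuous and that no part of the surface escapes to infinity in the $z$-direction near the boundary lines. This is handled by the monotonicity $u_{L,b}(0,0) \nearrow C$ together with the uniform interior gradient bound: on any compact $K \subset \RR\times(-b,b)$ the functions $u_{L,b}$ have uniformly bounded gradient and are pinned between $-C + o(1)$ (near the center) and $0$ (on the boundary), hence are uniformly bounded, so Arzelà–Ascoli plus standard elliptic/quasilinear estimates give smooth subconvergence to a translating graph $\Sigma$ over $\RR\times(-b,b)$ with the correct boundary behavior. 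One should also check that $\Sigma$ is not all of a grim reaper surface translated to touch the boundary (which would be a complete example, contradicting finiteness of $C$ in a different way) — but this, too, is subsumed by the conclusion of Theorem~\ref{th:bounded-by-lines}, since that theorem's conclusion ($M$ a piece of $z = \log\cos y - \log\cos b$) combined with $\Sigma$ being bounded above and bounded by the two lines is consistent only with $b < \pi/2$.
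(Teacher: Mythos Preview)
Your proposal is correct and follows essentially the same approach as the paper: pass to the (subsequential) limit of the vertically translated graphs $D(R_{L,b})-(0,0,u_{L,b}(0,0))$ to obtain a translating graph bounded by the two lines $\{y=\pm b\}$ in a horizontal plane, then apply Theorem~\ref{th:bounded-by-lines} (after a vertical shift) to conclude $b<\pi/2$. The paper's proof is terser but identical in content; your added remarks about nonemptiness, simple connectivity, and bounded slices are correct justifications that the paper leaves implicit.
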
 

\begin{proof}   Consider the functions

\begin{equation} \label{eqn:u-L-b}
u_{L,b}(x,y)-u_{L,b}(0,0):R_{L,b}\rightarrow\RR.
\end{equation}
Notice that the value of these functions is $0$  at $(0,0)$, where the tangent planes to their graphs are horizontal.
As $L\rightarrow\infty$, a subsequence converges to a graph of  a function  $$v_b:\RR \times(-b,b)\rightarrow \RR.$$
The function has boundary values $-C$ on the lines $\{z=-C\}\cap\{y=\pm b\}$. 
Applying Theorem~\ref{th:bounded-by-lines} to the vertical translation of the surface by $(0,0,C)$ completes the proof.
 \end{proof}
 \begin{remark}\label{rem:b<pi/2}
It follows from Theorem~\ref{th:bounded-by-lines} that 
$$\lim_{L\rightarrow\infty} u_{L,b} (x,y) =\log(\cos y) +C,$$ 
where $C=-\log(\cos b)$, when
$b< \pi/2$. It is not hard to prove that in fact, one does not need to take subsequential limits: all the limits are the same. 
 \end{remark}

What happens when $b\geq\pi/2\,$? the functions  $$u_{L,b}$$  in \eqref{eqn:u-L-b}  have subsequential limits that produce 
 complete translating graphs. When $b=\pi/2$, it is easy to show that in fact all the limits are the same: the grim reaper surface. When $b>\pi/2$ there are two important questions to be answered:
 \begin{enumerate}
\item If a subsequential limit 
 exists, is it defined over a strip of width $2b$, or possibly over thinner  strip 
 inside $\RR\times(-b,b)$ ?
\item Are there different subsequential limits?

 \end{enumerate}
 The answers are the best possible:  the limit function is defined over the full strip  $\RR\times(-b,b)$ and it is unique. The fact that the limit functions are defined over the full strip follows from a gradient estimate for $u_{L,b}$ that depends only on an upper bound for $b$ and a lower bound for $b-|y|$. Uniqueness is  more complicated to prove, but it is true. We arrive at the following result:
 
 \begin{theorem}(\cite{graphs})\label{th:DWing}
 Let $b>\pi/2$. Then, modulo translations, there is a unique complete translator $f_b:\RR\times(-b,b)\rightarrow \RR$ that is not a tilted grim reaper surface. 
 These surfaces, called $\Delta$-wings, are symmetric with respect to reflection in the vertical coordinate planes: $f_b(x,y)=f_b(-x,y)= f_b(x,-y)$.
 They form a smooth family. As $b\rightarrow\pi/2$ they converge to the grim reaper surface. As $b\rightarrow\infty$,  they converge to a bowl soliton.
 \end{theorem}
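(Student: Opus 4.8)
The plan is to prove the assertions in the order: (i) existence of a complete, non-grim-reaper translator over each strip of width $2b>\pi$; (ii) its symmetries; (iii) its uniqueness; (iv) the limiting behavior as $b\to(\pi/2)^+$ and as $b\to\infty$ — each step using the previous ones.

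\emph{Existence.} Fix $b>\pi/2$ and reconsider the surfaces $\Sigma_L:=D(R_{L,b})-(0,0,u_{L,b}(0,0))$ of Section~\ref{sec:DeltaWings}, which lie in $\{z\le0\}$ and pass through the origin. The contrapositive of Proposition~\ref{prop:bounded-limit} gives $u_{L,b}(0,0)\to\infty$, so a subsequential limit $\Sigma$ is a \emph{complete} translating graph, and the gradient estimate recalled there (depending only on an upper bound for $b$ and a lower bound for $b-|y|$) shows $\Sigma$ is a graph over the \emph{entire} strip $\RR\times(-b,b)$ with horizontal tangent plane at the origin. Each $\Sigma_L$, hence $\Sigma$, is invariant under $(x,y)\mapsto(-x,y)$ and $(x,y)\mapsto(x,-y)$. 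A tilted grim reaper surface over a strip of width $2b>\pi$ has slope $\tan\theta=\sqrt{(2b/\pi)^2-1}\ne0$ in \eqref{TGR2} and so is not $(x,y)\mapsto(-x,y)$-symmetric, while the (untilted) grim reaper surface lives over a strip of width exactly $\pi$; by Remark~\ref{rmk:flat-implies GR} these are all the complete flat translating graphs, so $\Sigma$ is not flat. By Spruck--Xiao together with the strong maximum principle for $k_1/H$ (as recalled before Proposition~\ref{strictly convex graph}), $K>0$ everywhere on $\Sigma$, so $\Sigma$ is a $\Delta$-wing $f_b$ with the stated symmetries, and by Proposition~\ref{strictly convex graph} it is asymptotic to the positively/negatively tilted grim reaper surfaces of width $2b$ as $x\to\mp\infty$.

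\emph{Uniqueness and $x$-symmetry.} Let $g$ be any complete translator over $\RR\times(-b,b)$ that is not a tilted grim reaper surface. By the same argument $K>0$ on $g$, so Proposition~\ref{strictly convex graph} applies: $g$ is $y$-even, and $g(x+t,y)-g(t,0)$ converges as $t\to\mp\infty$ to the positively/negatively tilted grim reaper of width $2b$ — the same asymptotic models as $f_b$, since the tilt is determined by $b$. The translations preserving graphicality over this fixed strip form a $2$-parameter group (shifts in $x$ and $z$), which acts transitively on the pair (asymptotic vertical level at $x=-\infty$, asymptotic vertical level at $x=+\infty$); so after such a translation $g$ and $f_b$ have identical asymptotic grim reaper surfaces, and $w:=g-f_b\to0$ as $x\to\pm\infty$. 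One further shows $w\to0$ as $y\to\pm b$, both graphs approaching the same tilted grim reaper cylinder near each lateral edge with quantitative rates from \cite{spruck-xiao}; then $\sup w$ and $\inf w$ are attained at interior points, and if either were nonzero, $g$ would touch a vertical translate of $f_b$ from one side at an interior point, contradicting the strong maximum principle for surfaces minimal in $g_{ij}=e^{-z}\delta_{ij}$. Hence $w\equiv0$. Taking $g=f_b(-x,y)$ (again a $\Delta$-wing) then gives $f_b(x,y)=f_b(-x,y)$. \emph{This contact-at-infinity step is the crux:} non-compactness of the strip allows the extrema of $w$ to escape to $x=\pm\infty$ or $y=\pm b$ a priori, and excluding this requires precise asymptotics of complete translating graphs over strips.

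\emph{Smooth family and limits.} Rescaling $y$ to a fixed strip, the $\Delta$-wing solves $\mathcal F_b(u)=0$ for a $b$-dependent operator; in weighted H\"older spaces adapted to the two grim reaper ends, the linearization $D\mathcal F_b|_{f_b}$ is Fredholm with kernel spanned by the Jacobi fields of the $x$- and $z$-translations, so after fixing the normalization $f_b(0,0)=0$ it becomes an isomorphism and the implicit function theorem yields a smooth family $b'\mapsto f_{b'}$. For the limits, note that on a $\Delta$-wing $K>0$ and $|H|=|\langle\nu,\ee_3\rangle|\le1$, so $|A|^2\le H^2\le1$ everywhere; together with $f_b\le0=f_b(0,0)$ and horizontal tangent plane at the origin, this gives uniform local control, so subsequential limits exist as complete translating graphs through the origin. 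As $b\to(\pi/2)^+$ the domains decrease to $\RR\times(-\pi/2,\pi/2)$, over which — by Theorem~\ref{th:graphs} and Remark~\ref{remark: no-thin-graphs} — the grim reaper surface is the only complete translating graph; as $b\to\infty$ the domains exhaust $\RR^2$, over which the bowl soliton is the only one. Since the limiting object is unique in each case, no subsequence is needed, so $f_b$ converges to the grim reaper surface as $b\to(\pi/2)^+$ and to the bowl soliton as $b\to\infty$ — modulo the routine check that the limit does not degenerate, which the origin normalization and the bound $|A|\le1$ prevent.
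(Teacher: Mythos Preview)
The paper does not itself prove this theorem; it cites \cite{graphs} and supplies only the outline in the paragraphs preceding the statement (existence via limits of $D(R_{L,b})$, full-strip domain from a gradient estimate, and the remark that ``uniqueness is more complicated to prove, but it is true'').  Your existence and symmetry arguments track that outline faithfully, and your observation that the $x$-symmetry of the limit rules out a tilted grim reaper is clean.

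The genuine gap is in uniqueness, and you have located it yourself.  Matching the two tilted-grim-reaper ends by an $(x,z)$-translation so that $w:=g-f_b\to0$ as $x\to\pm\infty$ is fine; that is exactly what Proposition~\ref{strictly convex graph} delivers.  But the claim that $w\to0$ as $y\to\pm b$ is not supported: both $g$ and $f_b$ diverge to $-\infty$ at the lateral edges, and neither Proposition~\ref{strictly convex graph} nor anything quoted from \cite{spruck-xiao} controls the \emph{difference} there.  Even the convergence as $x\to\pm\infty$ is only given pointwise; to trap an interior extremum you would need it uniformly in $y\in(-b,b)$, which is a separate (and nontrivial) assertion.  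Without both of these, $\sup w$ and $\inf w$ can escape to the boundary of the strip and the strong maximum principle does not apply.  The argument in \cite{graphs} avoids comparing two arbitrary solutions directly; it works instead with the monotone approximating family $u_{L,b}$ as barriers, so one never has to control a difference of two unknown solutions near $y=\pm b$.  Two smaller points: your smooth-dependence step asserts but does not verify that the linearization is Fredholm in the weighted spaces with kernel exactly the translation Jacobi fields; and for the $b\to\infty$ limit, the gradient estimate you invoked explicitly depends on an upper bound for $b$, so you need a separate reason (e.g., convexity plus the origin normalization and $|A|\le1$) to conclude that the limit is an entire graph rather than a graph over a proper strip.
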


 \section{Compact translating annuli bounded by convex  curves}\label{sec:compact-annuli}
We produced $\Delta$-wings as the limits of vertical translates of the graphs $D(R_{L,b})$ for fixed $b\geq \pi$ and $L\rightarrow\infty$.
 (Recall that $R_{L,b} $  is the rectangle $[-L,L]\times[-b,b]$ in the plane $\{z=0\}$.) 
 The surfaces in Theorem~\ref{annuloids Aa} are produced by taking limits of  vertical translates  of compact translating annuli bounded by a pair of disjoint, nested,  symmetrically placed 
 rectangles in the plane $\{z=0\}$.  In this section and the next one,  we will describe how we produce such annuli with  desired neck size.

 \begin{definition}\label{def:Acomp} We define  $\Cc$ to be  the  space of compact, properly embedded translating annuli $M$  in  the upper halfspace 
$\RR^2\times [0,\infty)$ such that:
\begin{enumerate}[1)]
\item   $\partial M$ is a pair of disjoint, nested, $C^{2,\alpha}$ convex curves with strictly positive curvature  in $\RR^2\times\{0\}$.
\item  $M$ is invariant under reflection in the planes $\{x=0\}$ and $\{y=0\}$.
\end{enumerate}

 Note that if $M \in \Cc$, then the curves in $\partial M$ will  also  be invariant under reflection in the planes $\{x=0\}$ and $\{y=0\}$.
  We denote  these boundary curves by
   $\partialin M$ and $\partialout M$. (See Fig. \ref{intro-1}.)
 \end{definition} 
 
 \begin{remark}\label{rmk:area-estimate}
\begin{enumerate}
	\item It is not hard to show that an annulus $M \in \Cc$ is disjoint from the $z$-axis and, from that, it follows that a curve in $M$ 	is homotopically trivial if and only if its winding number around $Z$ is zero. 

	\item For the annuli  $M\in \Cc$ or for smooth limits of these surfaces we have the following area and curvature estimates \cite{white-entropy,annuloids}: there exists finite 		constants $c_1,c_2$, for which
		\begin{itemize}
			\item $\area(M\cap \BB(p,r))\leq c_1r^2$, 
			\item $|A(M,p)|\min\{1, \dist(p,\partial M),\dist(p, Z)\}\leq c_2.$

		\end{itemize}
\end{enumerate}
where $A$ is the second fundamental form of  $M$ and $\BB(p,r)$ is the ball of radius $r$ in $\RR^3$ centered at  $p$.
 \end{remark}

\begin{definition} \label{curves}
We denote by $\curves$  the space of pairs  $\Gamma=\Gamma_{\rm in} \sqcup \Gamma_{\rm out}$  in $\RR^2\times\{0\}$ of disjoint, Jordan curves in $\RR^2 \times \{0\}$ that are
  nested, symmetric, $C^{2,\alpha}$, convex and have strictly positive curvature.

The map $M\mapsto \partial M$ defines a natural projection

$$\Pi: \Cc \longrightarrow\curves .$$
\end{definition}

  \begin{figure}[htbp]
\begin{center}
\includegraphics[height=5.5cm]{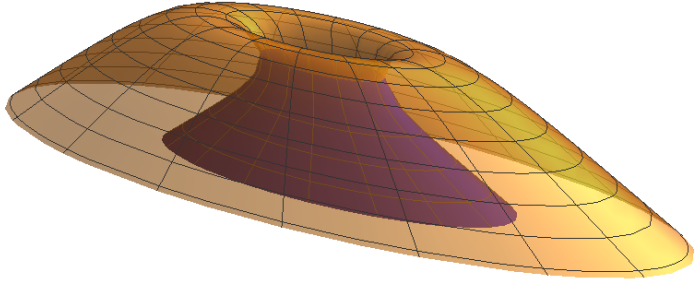}
\caption{\small A translating annulus with compact boundary consisting of two nested, planar, convex curves. \label{fig:compact-annular-translator}}
\label{intro-1}
\end{center}
\end{figure}

At this point, we do not know very much about the space $\Cc$. Indeed, for a pair of curves corresponding to a point  in $\curves$, there may not  be  any annuli in $\Cc$ with that pair of curves as boundary. 
 Looking again at the example discussed in Remark~\ref{rem: R3-gap}, consider a curve $\Gamma$ in the plane  $\{z=0\}$, and let  $\Gamma_t= \Gamma+t\ee_3$  be its translate in the plane  $\{z=t\}$.  Define $\Gamma(t) = \Gamma  \sqcup \Gamma_t$.  For small $t$, there is stable a minimal annulus that is  close to the  vertical ribbon connecting $\Gamma$ to
$ \Gamma_t$. There is also an unstable minimal annulus that looks like  the union of disjoint  planar disks joined by a  small catenoidal neck. For large enough $t$, there is no connected minimal surface whose boundary equals $\Gamma (t)$. 
 But there is a  smooth family $M(t)$, of minimal annuli parametrized by an open interval,with    $\partial M(t)=\Gamma (t)$.  When compactified, we get a closed interval. The annuli going to one endpoint  converge, as sets, to $\Gamma$. At the other end, the convergence is to the disk $D$ bounded by $\Gamma$, and that convergence is smooth, of multiplicity $2$,  away from $\Gamma$ and the point where the catenoid-like handle collapses. Moreover, the length $l(t)$ of the shortest geodesic in $M(t)$, is a continuous function on this interval, which implies that all values in the interval $(0,l(\Gamma))$ 
are assumed.

 We will take the discussion above for minimal annuli  in $\RR^3 $ as a model.

  Fix $\Gamma_0$,  a  smooth,  closed, convex 
  curve in the plane $\{z=0\}$, symmetric with respect to the coordinate axes and with strictly positive curvature. Let
\begin{equation}\label{eq:Gamma(t)}
\Gamma(t)=\Gamma_{\rm in}(t)\sqcup\Gamma_{\rm out}(t),\,\,\, t\in[0,1]
\end{equation}
 be a smooth family of pairs of closed, convex, symmetric curves with the following properties:
 \begin{enumerate}
  \item  For $t\in(0,1)$, $\Gamma(t) $ is a smooth path in $\curves$ 
 \item $\Gamma(0)=\Gamma\sqcup\Gamma$

 \item $\Gamma_{\rm in}(t)$ is a compact curve for all $t\in[0,1]$
 \item $\Gamma_{\rm out}(1)$ is the boundary of a strip $[-d, d]\times \RR$, where  $d$ is large enough so that the distance between $\Gamma_{\rm out}(1)$ and  $\Gamma_{\rm in}(1)$ is at least 
 $\pi$.
 \end{enumerate}
   \begin{figure}[htbp]
\begin{center}
\includegraphics[height=5.5cm]{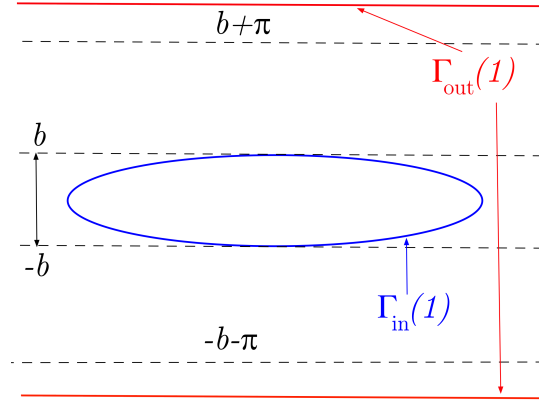}
\caption{\small $\Gamma(1)=\Gamma_{\rm in}(1)\sqcup\Gamma_{\rm out}(1)$, an endpoint of  the path $\Gamma(t)$ in \eqref{eq:Gamma(t)}.}
\label{fig:gamma-t}
\end{center}
\end{figure}

In previous sections we defined $D(\Omega)$ to be the unique translating graph over a convex planar region $\Omega$. with zero boundary values.  In this section 
we will abuse that notation and allow ourselves, for a convex plane curve $\Gamma$,  to denote by
  $D(\Gamma)$ the unique graphical translator with boundary $\Gamma$.  Our goal is to prove that there exists a connected family $\Ff '$ of translating annuli in $\Cc$, each one of which
 has boundary equal to $\Gamma (t)$ for some value $t\in(0,1)$.  Furthermore, when considered as subsets of $\RR^3$ the closure of $\Ff'$ is compact and is equal to $\Ff:=\Ff'\cup\{\Gamma, D\}$. 
 In addition,  the necksize $x(M)$ (see Definition~\ref{x(M)})  is a continuous function that takes on all values in the interval $(0,x(\Gamma))$, where
 $x(\Gamma)$ the distance from the origin to $\Gamma\cap\{y=0\}$.

 We will do this by using the path-lifting theorem, which follows  from White \cite{white87}:
 \begin{theorem}\label{th:lifting} Let $$\mathcal M=\Pi ^{-1}(\Gamma (0,1)).$$ If $\Gamma(t)$ is transverse to $$\Pi:\Cc\rightarrow \curves,$$ then $\mathcal M$ has the structure of a smooth $1$-manifold. More over the map
 $$t:\mathcal M\rightarrow (0,1), \mbox{ where } \partial M=\Gamma (t),$$
is smooth.
 \end{theorem}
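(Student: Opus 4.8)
\textbf{Proof plan for Theorem~\ref{th:lifting}.}

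The plan is to realize $\Cc$ and $\curves$ as (infinite-dimensional) Banach manifolds modeled on Hölder spaces, exhibit $\Pi$ as a smooth Fredholm map of index $1$ between them, and then invoke the parametric transversality / path-lifting machinery of White~\cite{white87}. First I would set up the functional-analytic framework: parametrize $\curves$ near a given pair $\Gamma = \Gamma_{\rm in}\sqcup\Gamma_{\rm out}$ by $C^{2,\alpha}$ normal graphs over $\Gamma$ subject to the symmetry constraints (invariance under reflection in $\{x=0\}$ and $\{y=0\}$) and the requirement that the perturbed curves remain disjoint, nested, convex and have strictly positive curvature — all of which are open conditions, so $\curves$ is a smooth Banach manifold. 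Similarly, using the area and curvature estimates in Remark~\ref{rmk:area-estimate} together with standard elliptic boundary regularity, every $M\in\Cc$ is a compact $C^{2,\alpha}$ surface with boundary; I would coordinatize a neighborhood of $M$ in $\Cc$ by symmetric $C^{2,\alpha}$ normal graphs $u$ over $M$ that are $g$-minimal (where $g_{ij}=e^{-z}\delta_{ij}$), which is the zero set of the mean-curvature operator — a second-order quasilinear elliptic operator. The embeddedness, annulus topology, disjointness from the $Z$-axis, and the boundary conditions defining $\Cc$ are again open, so $\Cc$ is a smooth Banach manifold and $\Pi=(\,M\mapsto\partial M\,)$ is a smooth map.

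Next I would analyze the linearization $d\Pi_M$. At a point $M\in\Cc$ with $\partial M = \Gamma\in\curves$, the tangent space $T_M\Cc$ consists of symmetric Jacobi fields $\phi$ on $M$ (solutions of the linearized $g$-minimal equation $L_M\phi=0$, i.e. $\Delta_M\phi + (|A|^2 + \text{lower order})\phi = 0$ in the appropriate weighted sense), and $d\Pi_M(\phi) = \phi|_{\partial M}$, the restriction of the normal component to the boundary. The relevant fact is that the Dirichlet problem for $L_M$ on the compact surface $M$ is Fredholm: given symmetric boundary data $\psi\in C^{2,\alpha}(\partial M)$ respecting the reflection symmetries, the map $\phi\mapsto(L_M\phi,\phi|_{\partial M})$ is a Fredholm operator of index $0$, so $d\Pi_M$ has finite-dimensional kernel (the symmetric Jacobi fields vanishing on $\partial M$) and finite-codimensional image, with index equal to $1$. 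The index count is the key bookkeeping step: one uses that the space of boundary curves $\curves$ has one more degree of freedom than is pinned down by the minimal-surface equation — concretely, after quotienting by the symmetries, $\Pi$ restricted to the relevant stratum gains exactly one net dimension, which is the source of the $1$-manifold structure. This is where I would be most careful, because the precise index computation must account for the reflection symmetries (which cut down both domain and target) and for the weighting near $Z$ and near $\partial M$ implicit in the area/curvature estimates; getting the symmetry-reduced Fredholm index to come out to $+1$ rather than $0$ is the main obstacle.

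Finally, assuming $\Gamma(t)$, $t\in(0,1)$, is transverse to $\Pi$ — meaning $\dot\Gamma(t) + \operatorname{image}(d\Pi_M) = T_{\Gamma(t)}\curves$ for every $M\in\Pi^{-1}(\Gamma(t))$ — the pullback bundle argument of White~\cite{white87} applies verbatim: $\Mm = \Pi^{-1}(\Gamma(0,1))$ is the transverse preimage of the path $\{\Gamma(t)\}$ under the Fredholm map $\Pi$ of index $1$, hence $\Mm$ is a smooth manifold of dimension $1$. The map $t\colon\Mm\to(0,1)$ sending $M$ to the parameter value with $\partial M = \Gamma(t)$ is then the composition of the smooth inclusion $\Mm\hookrightarrow\Cc$, the smooth map $\Pi$, and the smooth chart $\Gamma(\cdot)^{-1}$ on a neighborhood of the path, and is therefore smooth. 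The only remaining technical points are: verifying that White's framework (originally phrased for minimal surfaces in Riemannian manifolds) covers the present weighted / free-boundary-in-a-plane setting, which it does once one notes that $g$-minimal surfaces in $\RR^3$ with $g_{ij}=e^{-z}\delta_{ij}$ are genuine minimal surfaces in the Riemannian manifold $(\RR^3,g)$ and that prescribing $\partial M$ to lie in the totally geodesic plane $\{z=0\}$ with the stated curve regularity is a standard boundary condition; and checking that the compactness built into $\Cc$ (via Remark~\ref{rmk:area-estimate}) prevents the model from degenerating, so that the abstract manifold $\Mm$ has no hidden boundary over $(0,1)$.
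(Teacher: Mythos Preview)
Your overall strategy is exactly what the paper does: the paper simply states that Theorem~\ref{th:lifting} ``follows from White~\cite{white87}'' and gives no further argument, so invoking that framework is the right move. However, there is a genuine error in your index computation. In White's setup the projection $\Pi:\Cc\to\curves$ is Fredholm of index~$0$, not index~$1$: the linearized Dirichlet problem for the Jacobi operator $L_M$ on a compact surface with boundary is self-adjoint elliptic, so $\dim\ker(d\Pi_M)=\dim\operatorname{coker}(d\Pi_M)$ and the index vanishes. Your sentence ``the space of boundary curves $\curves$ has one more degree of freedom than is pinned down by the minimal-surface equation'' has no content, since both $\Cc$ and $\curves$ are infinite-dimensional Banach manifolds; the symmetries cut down domain and target equally and do not shift the index.

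This matters for the conclusion. For a Fredholm map of index~$k$ transverse to a $d$-dimensional submanifold, the preimage has dimension $k+d$. With your claimed index~$1$ and a $1$-dimensional path, you would get $\mathcal M$ of dimension~$2$, not~$1$. The correct accounting is: index~$0$, path of dimension~$1$, transversality gives $\dim\mathcal M=0+1=1$. Once you fix this, the rest of your outline (Banach-manifold charts via $C^{2,\alpha}$ normal graphs, openness of the defining conditions, smoothness of $t$ as a composition) is fine and matches the standard application of~\cite{white87}.
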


 We will use this theorem to find an open, connected component of $\mathcal M$ whose closure consists of the curve with the addition of $\Gamma$ at one end and $D (\Gamma)$ at the other.
  \begin{figure}[htbp]
\begin{center}
\includegraphics[height=4.5cm]{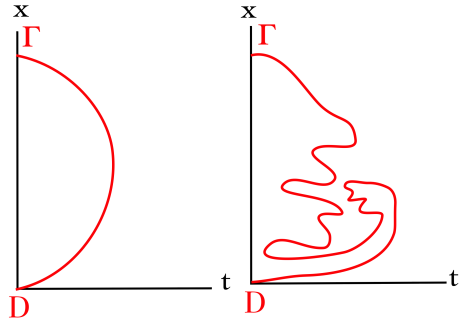}
\caption{\small Left: Lift of a path consisting of pairs of circles $\Gamma\sqcup\Gamma+t\ee_3$, as discussed in Remark~\ref{rem: R3-gap} and before equation \eqref{eq:Gamma(t)}. 
Right:  A fanciful illustration of  $\mathcal M$, the lift of the path $\Gamma_{\rm in} (t)\sqcup\Gamma_{\rm out} (t) $, defined in\eqref{eq:Gamma(t)}. }
\label{fig:spruck-3}
\end{center}
\end{figure}

 \begin{theorem} \label{th:Ff}There exists a connected family $\Ff '$ of  compact translating annuli   with the following properties
 \begin{enumerate}
 \item Each surface $M\in \Ff '$ is a compact annulus in $\Cc$ and there exists a $t=t(M)\in (0,1)$ for which $\partial M=\Gamma (t)$.
 \item $\Ff'=\Ff \cup\{\Gamma, D(\Gamma)\}$ is compact. Specifically, if $M_i$ is a divergent sequence in $\Ff'$ then 
 \begin{enumerate}
 \item Either  the $M_i$ converge to $\Gamma$ as sets, 
 \item Or  the $M_i$ converge to $D(\Gamma)$. The convergence is smooth with multiplicity $2$ on compact subsets of $\RR^3\setminus\{\Gamma, D(\Gamma)\}$.
 \end{enumerate}
 \end{enumerate}

 \vspace{0.1 in}
 
\noindent Moreover, the function $M\rightarrow x(M)$ takes on every value in the interval $(0,x(\Gamma))$.
 \end{theorem}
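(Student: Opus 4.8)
The plan is to apply the path-lifting theorem (Theorem~\ref{th:lifting}) to the path $\Gamma(t)$ of boundary curves, extract a suitable connected component of the lifted $1$-manifold $\mathcal M$, and then identify the two ends of that component with $\Gamma$ and $D(\Gamma)$ respectively. Throughout I will use, without further comment, the a priori area and curvature estimates of Remark~\ref{rmk:area-estimate}, which are exactly what is needed to pass to smooth subsequential limits of surfaces in $\Cc$. The first step is to arrange transversality: the family $\curves$ can be perturbed so that $t\mapsto\Gamma(t)$ meets $\Pi:\Cc\to\curves$ transversally, whence $\mathcal M=\Pi^{-1}(\Gamma(0,1))$ is a smooth $1$-manifold and $t:\mathcal M\to(0,1)$ is smooth. (One must check that the desired endpoint behavior at $t=0$ and the slab condition at $t=1$ are unaffected by such a perturbation; the conditions (1)--(4) imposed on $\Gamma(t)$ are stable.)

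Next I would analyze the behavior of the lift near $t=0$ and near $t=1$, using the minimal-annuli-in-$\RR^3$ picture of Remark~\ref{rem: R3-gap} as the guiding model. Near $t=0$, when $\Gamma(t)=\Gamma_{\rm in}(t)\sqcup\Gamma_{\rm out}(t)$ degenerates to the doubled curve $\Gamma\sqcup\Gamma$, there is a ``thin'' branch of $\mathcal M$ consisting of annuli close to a vertical ribbon joining the two nearby curves, whose limit as $t\to 0$ is $\Gamma$ itself (as sets); and there is a ``fat'' branch consisting of annuli close to the union of the graph $D(\Gamma)$ with itself, joined across a small neck, whose limit as $t\to 0$ is $D(\Gamma)$ with multiplicity $2$, the convergence being smooth away from $\Gamma$ and the point where the neck collapses — exactly alternative~(b) in the statement. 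At $t=1$, Theorem~\ref{th:gap3} (applied with $U_n'$ converging to the strip $[-d,d]\times\RR$ and the separation $\ge\pi$ guaranteed by condition (4)) forbids any connected translator in $\{z\ge0\}$ with boundary $\Gamma(1)$; hence $\mathcal M$ contains no point over $t=1$, so near $t=1$ the lift must ``turn around.'' Combining this with the compactness coming from the area/curvature estimates, I would argue that the connected component $\Ff'$ of $\mathcal M$ containing the fat branch has both ends escaping to $t\to 0^+$: one end limiting to $D(\Gamma)$ as in (b), and the other end limiting either to $\Gamma$ as in (a) or — and this must be excluded — again to $D(\Gamma)$. Ruling out the latter forces the component to be a genuine arc with the two distinct ends $\{\Gamma, D(\Gamma)\}$, giving $\Pi(\overline{\Ff'})=\Gamma([0,1])$ and $\overline{\Ff'}=\Ff'\cup\{\Gamma,D(\Gamma)\}$ compact.

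For the final assertion, I would show that $x:\mathcal M\to(0,\infty)$ extends continuously to $\overline{\Ff'}$ with $x\to 0$ on the $D(\Gamma)$ end (the neck collapses onto the $Z$-axis) and $x\to x(\Gamma)$ on the $\Gamma$ end (the annuli converge as sets to $\Gamma$, so $M\cap\{y=0\}$ approaches $\Gamma\cap\{y=0\}$). Since $\overline{\Ff'}$ is a compact connected arc and $x$ is continuous on it, the intermediate value theorem gives that $x$ attains every value in $(0,x(\Gamma))$. The main obstacle is the global topology-of-$\mathcal M$ argument in the middle paragraph: transversality and compactness alone only tell us that each component of $\mathcal M$ is an arc or circle with ends limiting to degenerate configurations, and one must rule out (i) closed loops that never reach the endpoints, (ii) a component with both ends on $\Gamma$, or (iii) a component with both ends on $D(\Gamma)$, in order to conclude that the specific component $\Ff'$ connects $\Gamma$ to $D(\Gamma)$. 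This is precisely where the non-existence result at $t=1$ (Theorem~\ref{th:gap3}) is essential — it is the ``wall'' that pins down the structure of the lift — together with a careful local analysis of which branches of $\mathcal M$ emanate from the degenerate limit at $t=0$ and a monotonicity or mutual-disjointness argument (in the spirit of the foliation/maximum-principle arguments used for graphs) to control the fat branch.
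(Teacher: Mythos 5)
Your high-level plan matches the paper's: lift the path $\Gamma(t)$ via Theorem~\ref{th:lifting}, use Theorem~\ref{th:gap3} plus the area/curvature estimates and Lemma~\ref{boundedness-lemma} to show $t:\mathcal M\to(0,1)$ is proper and bounded away from $1$, invoke the dichotomy of Theorem~\ref{both} for the behavior as $t\to0$, and finish with the intermediate value theorem. Where the proposal genuinely falls short is the step you yourself flag as ``the main obstacle'': pinning down the global topology of $\mathcal M$. You try to start from the ``fat'' branch, which you neither construct nor show to be a well-defined end, and you need to rule out a component with both ends limiting to $D(\Gamma)$ --- but you offer only a gesture at ``monotonicity or mutual-disjointness arguments.''

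The paper closes this gap by working from the other end, where there is a concrete construction and a uniqueness statement. Proposition~\ref{prop: minimizing-graph} shows that for $t\in(0,\delta)$ the $g$-area-minimizing translator with boundary $\Gamma(t)$ is a graph over the planar annulus, and that it is the \emph{unique} graphical translator with that boundary; these graphs $M_g(t)$ converge to $\Gamma$ as $t\to0$. Hence $\mathcal E=\{M_g(t):t\in(0,\delta)\}$ is a well-defined end of a component $\mathcal F'$ of $\mathcal M$. If $M_i$ diverges to the \emph{other} end of $\mathcal F'$, properness forces $t(M_i)\to0$; but once $t(M_i)<\delta$, uniqueness forbids $M_i$ from being a graph (it is distinct from $M_g(t(M_i))\in\mathcal E$). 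Theorem~\ref{both} then leaves only one possibility: the $M_i$ converge to $D(\Gamma)$ smoothly with multiplicity $2$ away from $\Gamma\cup(Z\cap D(\Gamma))$. This is the mechanism you are missing; without the minimization construction and the small-$t$ uniqueness of the graph, the exclusion of the bad configurations (both ends on $\Gamma$, both ends on $D(\Gamma)$, or closed loops) does not follow, and your argument does not close.
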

 \section*{The proof of Theorem~\ref{th:Ff}}

 We will use Theorem~\ref{th:lifting} for the path $\Gamma(t)$ given in \eqref{eq:Gamma(t)}.
   The first step in achieving our goal is to establish properness for the function $t(M)$ defined on $\mathcal {M}$.
\begin{proposition} \label{prop:proper}The map $t:\mathcal M\rightarrow (0,1)$ is proper and bounded above by a constant $c<1$.
\end{proposition}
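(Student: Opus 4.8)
The proposition asserts two things about $t\colon\mathcal M\to(0,1)$: that it is bounded above by some constant $c<1$, and that it is proper. For the bound I would argue by contradiction from Theorem~\ref{th:gap3}. If there were $M_i\in\mathcal M$ with $t(M_i)\to1$, let $U_i\subset U_i'$ be the open convex regions bounded by $\Gamma_{\rm in}(t(M_i))$ and $\Gamma_{\rm out}(t(M_i))$. By properties (3)--(4) of the family $\Gamma(t)$ in \eqref{eq:Gamma(t)} (and the fact that $t\mapsto\Gamma(t)$ is a smooth family), the $U_i$ converge to the bounded convex region $U$ with $\partial U=\Gamma_{\rm in}(1)$, the $U_i'$ converge to the infinite strip $U'=(-d,d)\times\RR$, and $\dist(\partial U,\partial U')\ge\pi$. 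Theorem~\ref{th:gap3} then says that for all large $i$ there is no connected translator in $\{z\ge0\}$ with boundary $(\partial U_i\cup\partial U_i')\times\{0\}=\Gamma(t(M_i))$; since $M_i$ is such a surface, this is a contradiction. Hence $c:=\sup_{\mathcal M}t<1$.

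For properness it suffices, because $t\le c<1$, to show that any sequence $M_i\in\mathcal M$ with $t(M_i)\to t_\infty\in(0,1)$ has a subsequence converging to a surface in $\mathcal M$; indeed every compact $K\subset(0,1)$ satisfies $t^{-1}(K)=t^{-1}(K\cap[a,c])$ for some $a>0$ with $[a,c]\subset(0,1)$, so the displayed claim then gives compactness of all $t^{-1}(K)$. The compactness input is the uniform local area bound together with the curvature bound $|A(M,p)|\min\{1,\dist(p,\partial M),\dist(p,Z)\}\le c_2$ from Remark~\ref{rmk:area-estimate}, and the fact that every surface of $\Cc$ lies below a fixed bowl soliton: since $t(M_i)\in[a,c]$ keeps the boundary curves $\Gamma(t(M_i))$ in a bounded region, all the $M_i$ lie in one compact subset of $\RR^3$. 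After passing to a subsequence the $M_i$ therefore converge as integral varifolds, in the Hausdorff sense, and smoothly on compact subsets of $\RR^3\setminus(Z\cup\partial)$, to a translator $M_\infty\subset\{z\ge0\}$ of finite area with $\partial M_\infty=\Gamma(t_\infty)$.

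It then remains to show $M_\infty\in\mathcal M$. Away from $Z$ and the boundary the curvature bound rules out any ``thin tube'' collapse, so there $M_\infty$ is an embedded surface of multiplicity one with bounded second fundamental form; near $\partial M_\infty$, boundary regularity for the translator equation (the curves $\Gamma(t(M_i))$ being uniformly $C^{2,\alpha}$-convex and converging smoothly to $\Gamma(t_\infty)$) gives multiplicity one and smoothness up to the boundary. The only thing that can go wrong is along $Z$. If $M_\infty$ is in fact a smooth embedded compact annulus, then it lies in $\Cc$, hence is disjoint from $Z$ by the argument behind Remark~\ref{rmk:area-estimate}(1), hence lies in $\mathcal M$, and transversality of $\Gamma(t)$ to $\Pi$ (Theorem~\ref{th:lifting}) upgrades the smooth convergence $M_i\to M_\infty$ to convergence in $\mathcal M$ --- which is what we want. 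So the whole matter reduces to excluding a degeneration of the neck near $Z$ --- equivalently, excluding $x(M_i)\to0$ --- while $t_\infty>0$. Such a collapse would, by the structure of annuli in $\Cc$, make $M_\infty$ split into the two translating graphs $D(\Gamma_{\rm in}(t_\infty))$ and $D(\Gamma_{\rm out}(t_\infty))$; but for $t_\infty\in(0,1)$ these nested curves are distinct, so the comparison principle for the translator graph equation gives $D(\Gamma_{\rm in}(t_\infty))<D(\Gamma_{\rm out}(t_\infty))$ over the inner region, the two disks are disjoint, and the limit is a disconnected pair of graphs rather than the multiplicity-two disk that occurs as $t\to0$.

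The compactness and boundary-regularity ingredients above are standard and are quoted from \cite{white-entropy} and \cite{annuloids}. The genuinely delicate step is the last one --- proving that the neck of the $M_i$ cannot pinch while $t(M_i)$ is bounded away from $0$. This is exactly the phenomenon responsible, at the two ends of the relevant component of $\mathcal M$, for the appearance of $\Gamma$ and of the doubled disk $D(\Gamma)$ in Theorem~\ref{th:Ff}, and carrying it out rigorously requires the fine description of the surfaces of $\Cc$, and of their possible limits, established in \cite{annuloids}.
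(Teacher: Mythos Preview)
Your upper-bound argument via Theorem~\ref{th:gap3} is correct and matches the paper's exactly. For properness, however, the paper is far terser than you: after observing that $t\le T<1$ forces all boundaries $\bigcup_{M\in\mathcal M}\partial M$ into a compact set $K$, it invokes Lemma~\ref{boundedness-lemma} to put $\bigcup_{M\in\mathcal M}M$ into a compact region of $\RR^3$, and then simply cites the curvature and area estimates of Remark~\ref{rmk:area-estimate}(2) to conclude. There is no discussion of neck pinching in the paper's proof of this proposition.

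The neck-pinch scenario you isolate is a legitimate subtlety: the curvature estimate allows blow-up as points approach $Z$, so a priori $x(M_i)\to 0$ with $t(M_i)\to t_\infty\in(0,1)$ is not excluded by those estimates alone. Your observation that the limit would then be two \emph{disjoint} graphical disks (rather than a doubled disk) is correct but, as you yourself note, does not by itself yield a contradiction---a sequence of embedded annuli can in principle degenerate to two disjoint sheets joined by a shrinking tube. The paper's proof leaves this point implicit (the present paper is summarising results from \cite{annuloids}, where the detailed analysis lives), and your final paragraph correctly locates where the real work is. So your approach is the paper's, with an honest extra layer of scrutiny that the paper itself elides; but as written your argument is not a complete proof either, and you should not expect to extract the missing step from the area and curvature bounds alone.
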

 \begin{proof}  For the family $\Gamma (t)$ in  \eqref{eq:Gamma(t)},
we imposed condition $(4)$ in order assert, using Theorem~\ref {th:gap3}, that 
 for $t$ sufficiently close to $1$, $\Gamma(t)$ bounds no connected translator in the halfspace $\{z\geq 0\}$. Therefore, there
 is a $T\in (0,1)$ such that if $M$ is a symmetric translating annulus with $\Pi (M) =\Gamma (t)$, then $t<T$. Consequently 
 $$\bigcup_{M\in \mathcal M}\partial M \subset \bigcup_{t\in[0,T]}\Gamma(t)$$
lies in a compact set $K$. By Lemma~\ref{boundedness-lemma} (in Subsection~\ref{sub:gap}), it follows that $\bigcup_{ M\in\mathcal M}M$ also lies in a compact region of $\RR^3$.
Using the curvature and area estimates in Remark~\ref{rmk:area-estimate} (2), it follows that the map  $t:\mathcal M\rightarrow (0,1)$ is proper.
 
 \end{proof}
 
 Theorem~\ref{th:lifting} and Proposition~\ref{prop:proper}  tell us that the only way that a sequence in $\{M_i\}\subset\mathcal M$ with  $\partial M_i =\Gamma(t_i)$ can diverge is if
 $t_i\rightarrow 0$, and therefore the two curves in $\Gamma (t_i)$ are collapsing to $\Gamma$. So we are in a situation similar to that of  catenoidal minimal surfaces in $\RR^3$ discussed after Definition~\ref{curves}.
 
   If $t$ is small, the two plane curves
 $\Gamma (t)$ are very close to $\Gamma$ and the one can show  that there is   a $g$-area-minimizing minimal surface that is a graph over  the flat planar ribbon between the  two curves in $\Gamma(t)$, with  the height  of the graph going to zero with the distance between the two curves. For such a surface $M$, $x(M)$ will be close to the distance from the origin to $\Gamma\cap\{y=0\}$,  a quantity we will denote by  $x(\Gamma)$.  Also one expects that that there will be an (unstable) $g$-minimal annulus with the same boundary, and that annulus will be close  to  union of the two graphical $g$-minimal disks $D(\Gamma_{\rm in}(t))$ and $D(\Gamma_{\rm out}(t)$)),  together with a small neck joining them. And that small neck will be close to the $z$-axis. If $M$ is such a surface, then $x(M)$ is close to zero.  In fact, that is what happens.

\begin{theorem}\label{both} 
Let $M_i\in\Cc$ be a sequence with $\partialin M_i$ and $\partialout M_i$, the boundary components of $M_i$, 
both smooth curves with nonvanishing curvature.  Suppose they converge  in $C^{2,\alpha}$ to the same  smooth convex curve $\Gamma$. 
Denote by $\sigma(M_i)$ the length of the shortest homotopically nontrivial curve in $M_i$.
\begin{enumerate}
\item If $\sigma(M_i)\to 0$, then $M_i$ converges to $D(\Gamma)$, and the convergence is    
  $C^{2,\alpha}$  away from $\Gamma\cup (Z\cap D(\Gamma))$, with multiplicity $2$.
\item If $\inf_i\sigma(M_i)>0$, then $M_i$ converges to $\Gamma$.  For large $i$, 
$M_i$ is the graph of a function $u_i$ on the annular region between
   $\partialin M_i$ and $\partialout M_i$.  
Furthermore, 
\[
   \sup |Du_i| \to 0.
\]
\end{enumerate}
\end{theorem}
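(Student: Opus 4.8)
\textbf{Proof proposal for Theorem~\ref{both}.}

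The plan is to treat the two cases by a compactness argument based on the uniform area and curvature bounds in Remark~\ref{rmk:area-estimate}(2), combined with the uniqueness of the graphical translator $D(\Gamma)$ and the maximum principle. First I would observe that the hypotheses of Remark~\ref{rmk:area-estimate}(2) apply: the $M_i$ lie in $\Cc$, they are disjoint from $Z$ by Remark~\ref{rmk:area-estimate}(1), and their boundary curves converge in $C^{2,\alpha}$, so a fixed compact set contains all $\partial M_i$ and (by Lemma~\ref{boundedness-lemma}) all $M_i$. Hence the $M_i$ subconverge, in the sense of varifolds and smoothly away from $\Gamma\cup Z$, to a translator $M_\infty$ (possibly with multiplicity and possibly singular only where the neck collapses onto $Z$) whose boundary is $\Gamma$ (taken with the appropriate multiplicity). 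The task is to identify $M_\infty$ and then upgrade the convergence.

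\emph{Case (1): $\sigma(M_i)\to 0$.} A homotopically nontrivial curve of length tending to zero forces the neck to pinch off near the $Z$-axis (by Remark~\ref{rmk:area-estimate}(1) a nontrivial curve has nonzero winding number around $Z$, so it cannot shrink to a point away from $Z$; by the curvature estimate it must be approaching $Z$). Passing to the limit, $M_\infty$ is a translator with $\partial M_\infty=\Gamma$ and no neck, i.e.\ a translating \emph{disk} bounded by $\Gamma$; by the uniqueness statement recalled in Section~\ref{sec:DeltaWings} (the foliation/maximum-principle argument showing $D(\Gamma)$ is the unique compact $g$-minimal surface with boundary $\Gamma$) we get $M_\infty=D(\Gamma)$. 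To see the multiplicity is exactly $2$: away from the neck each $M_i$ is, for large $i$, a bigraph over a subdomain of the region bounded by $\Gamma$ — indeed since $M_i$ is an annulus disjoint from $Z$ and close to $D(\Gamma)$, cutting along $M_i\cap\{y=0\}$ and using the reflection symmetry shows $M_i$ consists of two graphical sheets that glue along the neck — so the density of the limit varifold is $2$ at interior points. The curvature estimate then promotes varifold convergence to $C^{2,\alpha}$ convergence on compact subsets of $\RR^3\setminus(\Gamma\cup(Z\cap D(\Gamma)))$, and since every subsequence has the same limit $D(\Gamma)$, the full sequence converges.

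\emph{Case (2): $\inf_i\sigma(M_i)=\delta>0$.} Now the neck cannot pinch, so the limit $M_\infty$ cannot be a disk; but $M_\infty$ is a translator in $\{z\ge 0\}$ with boundary (as a set) contained in $\Gamma$. The key point is that the height of $M_i$ must tend to zero: because $\partialin M_i$ and $\partialout M_i$ both converge to $\Gamma$, the annular region between them is collapsing to the curve $\Gamma$, and the maximum height of $M_i$ over that thin region is controlled — one can slide a tilted grim reaper surface (or simply use the sub/supersolutions $z=\pm\tfrac12\,\mathrm{dist}(\cdot,\Gamma)^{2}$–type barriers, or the bowl soliton as an upper barrier as in Section~\ref{sec:DeltaWings}) to force $\sup_{M_i} z\to 0$. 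Hence $M_\infty=\Gamma$ as a set, with no interior. Since $M_i$ is squeezed into a shrinking slab above a shrinking planar annular region, the curvature estimate (now with $\mathrm{dist}(p,\partial M_i)$ small only near $\partial M_i$, and $\mathrm{dist}(p,Z)\ge\delta'>0$ bounded below because $\sigma(M_i)\ge\delta$ keeps the surface away from $Z$) gives a uniform curvature bound on $M_i$ near any interior point, so for large $i$ each $M_i$ is a graph $u_i$ over the annular region between $\partialin M_i$ and $\partialout M_i$; and $\sup|Du_i|\to 0$ follows from $\sup_{M_i}z\to 0$ together with the interior gradient estimate for the translator equation (Remark~\ref{rmk:area-estimate}(2)) applied on the thin annular domain with small boundary data.

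\emph{Main obstacle.} The delicate step is Case~(1): ruling out that the limit picks up a piece of vertical plane or some other translator along the collapsing neck, and showing the multiplicity is exactly $2$ rather than higher. This requires controlling the geometry of $M_i$ in a shrinking neighborhood of $Z$ — showing the neck is, at small scales, modeled on a translating catenoid $W(\lambda_i)$ with $\lambda_i\sim\sigma(M_i)\to0$ (cf.\ the catenoid degeneration recalled after Definition~\ref{curves}), so that after removing a small cylinder around $Z$ the surface really is a bigraph converging to $D(\Gamma)$ with multiplicity $2$. The area bound $\area(M_i\cap\BB(p,r))\le c_1 r^2$ is what prevents extra sheets; making this quantitative near $Z$, and matching it with the outer bigraph region, is the technical heart of the argument.
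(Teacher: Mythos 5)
Your overall strategy---compactness from the area and curvature bounds of Remark~\ref{rmk:area-estimate}, identification of the limit via the uniqueness of $D(\Gamma)$, and upgrading to smooth convergence via the curvature estimate---is reasonable, and you rightly flag the multiplicity control near $Z$ in Case~(1) as the technical core. The present paper does not reproduce the proof of Theorem~\ref{both} (it is established in \cite{annuloids}), so I will assess the argument on its own merits.

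There is a genuine gap in Case~(2), namely the claim that $\sup_{M_i} z\to 0$. You argue this from the collapse of the planar annulus between $\partialin M_i$ and $\partialout M_i$, but that does not constrain $M_i$: Corollary~\ref{convex-corollary} only puts $M_i$ over the convex hull of $\partial M_i$, i.e.\ over the \emph{whole} disk bounded by $\partialout M_i$, and indeed in Case~(1) the $M_i$ do rise over that entire disk to approximate $D(\Gamma)$. None of your proposed barriers yields a vanishing height bound: $z=\pm\tfrac12\dist(\cdot,\Gamma)^2$ is not a translator and so cannot be used with the maximum principle for the Ilmanen metric; a (tilted) grim reaper surface is a graph over an infinite strip and has no way to be slid down onto a closed convex curve from above; and the bowl soliton gives only an $i$-independent height bound. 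What Case~(2) actually requires is the contrapositive of Case~(1): if $\sup_{M_i}z\not\to 0$, then a subsequence converges as varifolds with positive (indeed even) multiplicity to $D(\Gamma)$, and then the bigraph/collapsing-neck analysis near $Z\cap D(\Gamma)$ forces $\sigma(M_i)\to 0$, contradicting $\inf_i\sigma(M_i)>0$. So the two cases are not logically independent, and the neck-pinching analysis you defer as the ``main obstacle'' of Case~(1) must be done before Case~(2) can be closed. As a secondary point, once $\sup_{M_i} z\to 0$ is known, the conclusion $\sup|Du_i|\to 0$ is not an immediate consequence of the curvature bound in Remark~\ref{rmk:area-estimate}(2) (which controls $|A|$, not the gradient); one needs either a boundary gradient barrier on the thin annular domain combined with the interior curvature bound and the smallness of the height, or an explicit rescaling argument, to conclude that the graphs flatten out.
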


From Theorem~\ref{both} we have the following consequence: Suppose $t_i>0$ is a sequence converging to $0$ and $M_i$ is  connected  translator with $\partial M_i =\Gamma(t_i)$. Then after passing to a subsequence,  the $M_i$ converge, as sets, to either $\Gamma$ or $D(\Gamma)$. Moreover,

\begin{enumerate}
\item[i)] If the convergence is to $\Gamma$, then for suitably large, $i$,  $M_i$ is a graph.
\item[ii)] If the convergence is to $D(\Gamma)$, then the convergence is smooth  and with multiplicity $2$ on compact subsets of $\RR^3\setminus\{\Gamma\cup (D(\Gamma)\cap Z)\}$.
\end{enumerate}

\begin{proposition}\label{prop: minimizing-graph}There exists a $\delta>0$ such that if $t\in(0,\delta)$  then a  $g$-area-minimizing  translator with boundary equal to $\Gamma(t)$ is a graph over
the  planar annulus between $\Gamma_{\rm in} (t)$ and $\Gamma_{\rm out} (t)$. Moreover, there is a unique such translating surface $M(t)$ with these properties (graphical or $g$-area-minimizing) and  the $M(t)$ converge as sets to $\Gamma$.
\end{proposition}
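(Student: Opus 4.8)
\textbf{Setup and strategy.} The plan is to combine the compactness/regularity result Theorem~\ref{both} with a standard stability and maximum-principle argument to identify the minimizer as a graph, and then to upgrade this to uniqueness by exploiting the foliation of the ambient cylinder by vertical translates of the graphical minimizer. Throughout we work with the Ilmanen metric $g = e^{-z}\delta_{ij}$, for which translators are exactly the $g$-minimal surfaces, so ``$g$-area-minimizing translator with boundary $\Gamma(t)$'' means a $g$-area-minimizer in the class of integral currents (or $\mathbb Z_2$ currents, since $\Gamma(t)$ is connected in homology as a pair of curves bounding an annulus) with boundary $\Gamma(t)$. For each small $t>0$ such a minimizer $M(t)$ exists by standard compactness in geometric measure theory, is smooth up to the boundary (the boundary curves are smooth with nonvanishing curvature, and $g$ is smooth), and has the symmetries of $\Gamma(t)$ by uniqueness of the minimizer modulo the reflection group — or rather, we first produce a minimizer, then symmetrize.

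\textbf{Step 1: the minimizer is a graph for $t$ small.} Argue by contradiction. Suppose there is a sequence $t_i \to 0$ and $g$-area-minimizing translators $M_i = M(t_i)$ with $\partial M_i = \Gamma(t_i)$ that are not graphs over the planar annulus $A_i$ between $\Gamma_{\rm in}(t_i)$ and $\Gamma_{\rm out}(t_i)$. Since $M_i$ is area-minimizing, it is in particular stable, so by the curvature estimates available for stable minimal surfaces (together with the area bound: the $g$-area of $M_i$ is at most the $g$-area of the flat ribbon $A_i$, which tends to $0$), the shortest homotopically nontrivial curve $\sigma(M_i)$ cannot stay bounded below — if it did, Theorem~\ref{both}(2) would force $M_i$ to be, for large $i$, the graph of a function $u_i$ on $A_i$ with $\sup|Du_i|\to 0$, contradicting our assumption. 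Hence (after passing to a subsequence) $\sigma(M_i)\to 0$, and Theorem~\ref{both}(1) applies: $M_i \to D(\Gamma)$ smoothly with multiplicity $2$ away from $\Gamma \cup (Z\cap D(\Gamma))$. But this contradicts area-minimality: the multiplicity-$2$ disk has $g$-area roughly $2\,\area_g(D(\Gamma))$, whereas the flat ribbon $A_i$ is a competitor with $g$-area $\to 0$, so for large $i$ the minimizer cannot be close to $D(\Gamma)$ (quantitatively, $\mathbf{M}_g(M_i) \le \area_g(A_i) \to 0 < 2\area_g(D(\Gamma)) = \lim \mathbf M_g(\text{two-sheeted limit})$, using lower semicontinuity of mass). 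This rules out both alternatives of Theorem~\ref{both}, so no such sequence exists: there is $\delta>0$ such that for $t\in(0,\delta)$ every $g$-area-minimizer with boundary $\Gamma(t)$ is a graph over $A_t$.

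\textbf{Step 2: uniqueness and convergence to $\Gamma$.} Once we know the minimizer $M(t)$ is a graph, uniqueness follows by the standard foliation argument already used in Section~\ref{sec:DeltaWings} for $D(R)$: the vertical translates of a graphical $g$-minimal surface over $A_t$ with boundary $\Gamma(t)$ foliate the solid cylinder $A_t \times \RR$, so by the maximum principle for $g$-minimal surfaces any compact $g$-minimal surface with the same boundary — in particular any other $g$-area-minimizer, graphical or not — must coincide with it; this simultaneously shows the minimizer is unique and that it is automatically graphical (consistent with Step~1) and symmetric. Finally, convergence of $M(t)$ to $\Gamma$ as sets: since $M(t)$ is the graph of a function $u_t$ on $A_t$ with $u_t = 0$ on $\partial A_t = \Gamma(t)$, and the annular domains $A_t$ collapse to the curve $\Gamma$ as $t\to 0$ while $\sup|u_t|\to 0$ (by the height estimate — e.g. comparison with shrinking grim-reaper or bowl barriers over the thin ribbon, or directly from $\mathbf M_g(M(t))\to 0$ plus the graph property), the graphs $M(t)$ converge to $\Gamma\times\{0\} = \Gamma$ in Hausdorff distance.

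\textbf{Main obstacle.} The delicate point is Step~1: ruling out that the area-minimizer develops a collapsing neck and limits onto the double disk $D(\Gamma)$. The resolution hinges on the \emph{strict} inequality $\area_g(A_t)\to 0$ versus $\mathbf M_g$ of any multiplicity-$2$ limit being bounded below by $2\area_g(D(\Gamma))>0$; the flat ribbon is the crucial competitor that a genuine minimizer must beat, and this is exactly what forbids the double-sheeted degeneration. The other alternative of Theorem~\ref{both} (convergence to $\Gamma$ with $M_i$ eventually graphical) is precisely the conclusion we want, so the whole argument reduces to showing that for a \emph{minimizer} only that alternative can occur — which is the content of the area comparison above.
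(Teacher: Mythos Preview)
Your argument is correct and follows essentially the same line as the paper's: the key competitor is the flat planar ribbon between $\Gamma_{\rm in}(t)$ and $\Gamma_{\rm out}(t)$, whose $g$-area tends to $0$, so the minimizer's $g$-area tends to $0$ as well; this forces the good alternative of Theorem~\ref{both} (convergence to $\Gamma$ and eventual graphicality), after which uniqueness is the foliation/maximum-principle argument. The only cosmetic difference is that the paper rules out the double-disk alternative more directly by invoking the monotonicity formula (area $\to 0$ implies Hausdorff convergence to $\Gamma$, since a limit point away from $\Gamma$ would force a uniform lower area bound), whereas you pass through the dichotomy of Theorem~\ref{both} and then use lower semicontinuity of mass to derive the same contradiction; these are two phrasings of the same mechanism.
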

\begin{proof} Denote by $\Omega_{out}$ and $\Omega_{in}$ the planar domains bounded by $\Gamma_{\rm out}$ and $\Gamma_{\rm in}$, respectively
Suppose $t_i\rightarrow 0$ and  $M(t_i)$ is a sequence of $g$-area-minimizing  annuli with $\partial M_i=\Gamma(t_i)$. Then
$$\area_g(M(t_i))\leq \area_g(\Omega_{out}\setminus \Omega_{in}).$$
Note that  the right-hand side  of the above inequality  converges to $0$ as $t_i\rightarrow 0$.  Hence,   $\area_g(M(t_i))$ converges to zero. This implies that  the $M_i$ converge
to $\Gamma$ (in Hausdorff distance). If not it would violate the  the monotonicity formula. 
This means we may use statement $(i)$ above to conclude that for $t_i$ suitably small, $M(t_i)$ is a graph. One can now use the maximum principle to show that there is only one graphical translator with the same boundary as $M(t_i)$
\end{proof}

For $0<t<\delta$ denote by $M_g(t)$ the unique graphical translator with boundary equal to $\Gamma(t)$. Then

$$\mathcal E =\bigcup\{M_g(t)\, :\, t\in(0,\delta)\}\subset \mathcal M$$
is an end of a component, $\mathcal F '$, of $\mathcal M$, and that component is a connected $1$-manifold.   Let  $M_i$ be a sequence diverging to the other 
end of $\mathcal F '$. By the properness result above (Proposition~\ref{prop:proper}), we must have $t(M_i)\rightarrow 0$.  For $t(M_i)<\delta$, $M$
cannot be a graph. By Statement $(ii)$, above, the surfaces  $M_i$ have the property that as $t(M_i)\rightarrow 0$, the $ M_i$ converge to the simply connected graph $D(\Gamma)$ 
and the convergence is  smooth, with multiplicity $2$ on compact subsets of $\RR^3\setminus\{\Gamma\cup (D(\Gamma)\cap Z)\}$. Let
 $\mathcal F=\mathcal F'\cup\{\Gamma, D(\Gamma)\}$ and note that from $(ii)$ we have that $x(M_i)\rightarrow 0$. Morover, the neckwidth 
 $x(M)$ is a continuous function on $\mathcal F$, taking the value $0$ at $D(\Gamma)\}$ and $x(\Gamma)$ at $\Gamma$.

This completes the proof of Theorem~\ref{th:Ff}.

\section*{Compact Translating Annuli bounded by Rectangles}

In Definition~\ref{def:Acomp}, if we remove  from condition (1) the requirements that the boundary curves of $M\in\Cc$ are smooth and have strictly positive curvature, we may consider annular translators bounded by nested, symmetric, convex curves. These are boundaries described in  Definition~\ref{curves} of $\curves$ of the surfaces in $\Cc$ with the corresponding smoothness and positive curvature requirements  removed. Can we still assert the conclusions of  Theorem~\ref{th:Ff} for this expanded class?  

Consider a curve of boundaries  \eqref{eq:Gamma(t)}, for $t\in [0,1]$, which are convex but not necessarily  smooth or possessing  strictly positive curvature. Approximate them by 
one-parameter families 

$$t\in[0,1]\rightarrow \Gamma^n(t)=\Gamma^n_{in}(t)\sqcup \Gamma^n_{out}(t),$$
where all the curves in question satisfy the smoothness and positive curvature conditions of Definition~\ref{curves}. We choose these families so that for each $t\in[0,1]$, 
$\Gamma^n_{in}(t)$ converges to $\Gamma_{\rm in}(t)$ and $\Gamma^n_{out}(t)$ converges to $\Gamma_{\rm out}(t)$. We may choose $\Gamma^n(t)$ to be transverse to 
the projection $\Pi$.
Let $\mathcal M^n$, $\Gamma^n$ and $D^n$ be as in Theorem~\ref{th:Ff}.  That theorem asserts the existence of a compact, connected set $\Ff^n$ of closed subsets of $\RR^3$
that have the structure of a compact curve with endpoints $\Gamma^n$ and $D^n$, and interiors points $M$, each of which has boundary 
$\Gamma^n(t)=\Gamma^n_{in}(t)\sqcup \Gamma^n_{out}(t) $ for some $t=t(M)\in(0,1)$.  Passing to a subsequence, the $\Ff^n$ converge to a limit set $\Ff$  which is connected  and contains $\Gamma$ and $D(\Gamma)$. If $M\in\Ff\setminus\{\Gamma, D(\Gamma)\}$, then $M$ is the limit of surfaces in $M^n\in\Ff^n$ . Taking subsequential limits  we may assume that $t\in[0,1]$. By Theorem~\ref{th:gap3}, $t\neq 1$. By Proposition~\ref{prop: minimizing-graph} we can conclude that $t>0$.  
 We can approximate convex, symmetric  curves  by smooth convex curves with strictly positive curvature, so we have proved Theorem~\ref{th:Ff} for such families. There is one class of such curves that is natural and important for our construction.

\begin{definition}\label{Rr-definition}
We define $\Rr$ to be the space of  compact, translating annuli with the following properties:
\begin{enumerate}[\upshape(1)]
\item $\partialin M$ are $\partialout M$ are rectangles whose sides are
  parallel to the coordinate axes, and
\item $M$ is the limit of a sequence of translators $M_n\in \Cc$ 
  such that, for each $n$, $\partialin M_n$ and $\partialout M_n$ are smooth with nowhere vanishing curvature.
\end{enumerate}
\end{definition}

For use in the next section, we state the following theorem, an immediate consequence of the discussion above.

\begin{theorem} \label{th:a,b,x} For $a,b >0$, let $\Gamma_{a,b}$ be the symmetric rectangle that is the boundary of $[-a,a]\times[-b,b]\subset\RR^2$. Then for every $x\in(0,a)$, there
exists a compact annular translator $M_{a,b}(x)$ whose necksize is $x$ and whose boundary consists of  two rectangles $\Gamma_{a,b}$ and $\Gamma_{A,B}$, where
$A\in(a,\infty)$ and $ B\in(b, b+\pi)$.

\end{theorem}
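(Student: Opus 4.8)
The plan is to deduce Theorem~\ref{th:a,b,x} directly from Theorem~\ref{th:Ff} (in the form valid for convex, possibly non-smooth, boundary curves, as established in the discussion of $\Rr$) by constructing an appropriate path $\Gamma(t)$ in the space of pairs of nested symmetric convex curves, with the inner curve fixed and equal to $\Gamma_{a,b}$, and reading off the conclusion from the continuity of the necksize function $x(\cdot)$ along the family $\Ff'$.

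\medskip\noindent\textbf{Step 1: Set up the path.} Fix $a,b>0$ and let $\Gamma_{a,b}=\partial([-a,a]\times[-b,b])$. I want a smooth (after smoothing, as in the $\Rr$-discussion) one-parameter family $\Gamma(t)=\Gamma_{\rm in}(t)\sqcup\Gamma_{\rm out}(t)$, $t\in[0,1]$, of nested symmetric convex curves such that: $\Gamma_{\rm in}(t)\equiv \Gamma_{a,b}$ is constant; $\Gamma_{\rm out}(0)$ also equals $\Gamma_{a,b}$ (so $\Gamma(0)=\Gamma_{a,b}\sqcup\Gamma_{a,b}$, matching hypothesis (2) of \eqref{eq:Gamma(t)} with $\Gamma=\Gamma_{a,b}$); $\Gamma_{\rm out}(t)$ grows monotonically with $t$ and is a rectangle $\Gamma_{A(t),B(t)}$ for each $t$, with $A(t)\to\infty$ and $B(t)$ increasing; and $\Gamma_{\rm out}(1)=\partial([-d,d]\times\RR)$ a strip whose distance to $\Gamma_{a,b}$ is at least $\pi$ — i.e., we can arrange $B(1)-b\ge\pi$ and $d-a$ large. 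The one delicate point is condition $B(t)\in(b,b+\pi)$ for the \emph{relevant} values of $t$: we should keep $B(t)-b$ strictly below $\pi$ as long as possible, and only let it exceed $\pi$ near $t=1$ where condition (4) of \eqref{eq:Gamma(t)} is needed. Concretely, parametrize so that $B(t)=b+$ (something approaching $\pi$ but $<\pi$) for $t\in[0,t_0)$ and only cross $\pi$ on $[t_0,1]$; and let $A(t)\to\infty$ already on $[0,t_0)$, say $A(t)=a/(1-t/t_0)$. Since the only surfaces that survive in $\Ff'$ have $t(M)$ bounded away from $1$ (indeed $t(M)<c<1$ by Proposition~\ref{prop:proper}, whose proof invokes Theorem~\ref{th:gap3} precisely via condition (4)), the inner width of every $M\in\Ff'$ has $B(t(M))-b<\pi$ automatically, giving $B\in(b,b+\pi)$, provided we choose the parametrization so that $B(t)-b$ stays below $\pi$ on $[0,c]$ — which we can, since $c<1$ is fixed and we control $B(t)$.

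\medskip\noindent\textbf{Step 2: Apply Theorem~\ref{th:Ff}.} With $\Gamma:=\Gamma_{a,b}$ (smoothed and made strictly convex, then passing to the limit as in the $\Rr$-discussion so the conclusion holds for the actual rectangle), Theorem~\ref{th:Ff} gives a connected family $\Ff'$ of compact annular translators, each with boundary $\Gamma(t)=\Gamma_{a,b}\sqcup\Gamma_{A(t),B(t)}$ for some $t\in(0,1)$, such that $M\mapsto x(M)$ is continuous on $\Ff'$ and takes every value in $(0,x(\Gamma_{a,b}))$. Since the inner curve is the symmetric rectangle with $y$-halfwidth $b$ and $x$-halfwidth $a$, the distance from the origin to $\Gamma_{a,b}\cap\{y=0\}$ is exactly $a$, so $x(\Gamma_{a,b})=a$; hence $x$ takes every value in $(0,a)$.

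\medskip\noindent\textbf{Step 3: Extract $M_{a,b}(x)$ and read off the ranges of $A,B$.} Given $x\in(0,a)$, pick $M=M_{a,b}(x)\in\Ff'$ with $x(M)=x$; set $t=t(M)\in(0,1)$, $A=A(t)$, $B=B(t)$. Then $\partial M=\Gamma_{a,b}\sqcup\Gamma_{A,B}$ as required. By Proposition~\ref{prop:proper}, $t\le c<1$, so $A=A(t)$ is finite, i.e.\ $A\in(a,\infty)$ (it exceeds $a$ since $A(t)>A(0)=a$ for $t>0$ by monotonicity), and $B=B(t)$ satisfies $B\in(b,b+\pi)$ by the choice of parametrization in Step~1 keeping $B(\cdot)-b<\pi$ on $[0,c]$. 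This is exactly the assertion of the theorem.

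\medskip\noindent\textbf{Main obstacle.} The substantive work is all in Step~1: arranging a single path that (i) satisfies the four conditions of \eqref{eq:Gamma(t)} required to invoke Theorems~\ref{th:Ff} and hence the path-lifting and properness machinery, while (ii) being calibrated so that the a priori bound $t(M)<c<1$ translates into the \emph{sharp} inclusion $B\in(b,b+\pi)$ rather than merely $B<\infty$. Since $c$ depends (through Theorem~\ref{th:gap3} and Proposition~\ref{prop:proper}) only on the tail of the path near $t=1$, one must first fix how fast $B(t)$ is allowed to approach and then cross $\pi$, extract the resulting $c$, and then — if necessary — reparametrize so that $B(t)-b<\pi$ throughout $[0,c]$; this is a matter of ordering the choices correctly, not a deep difficulty, but it is where the proof must be written carefully. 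Everything else is a direct citation of the preceding results together with the elementary identification $x(\Gamma_{a,b})=a$.
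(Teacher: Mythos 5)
Your overall strategy is the same as the paper's: invoke Theorem~\ref{th:Ff} (in the extended form valid for rectangular boundaries, via the smoothing discussion leading to $\Rr$) along a path with fixed inner curve $\Gamma_{a,b}$ and growing outer rectangle, and read off the conclusion from the fact that $x(\cdot)$ takes every value in $(0,x(\Gamma_{a,b}))$, together with the identification $x(\Gamma_{a,b})=a$. That much is correct, and Steps~2 and~3 are fine.

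The problem is the way you engineer the bound $B\in(b,b+\pi)$ in Step~1, and it is a genuine gap as written. First, the parametrization $A(t)=a/(1-t/t_0)$ forces $A(t)\to\infty$ as $t\to t_0^-$ with $t_0<1$; this exits the space $\curves$ at $t_0$, whereas condition~(1) of \eqref{eq:Gamma(t)} requires $\Gamma_{\rm out}(t)$ to be a compact Jordan curve for every $t\in(0,1)$, and the properness argument (Proposition~\ref{prop:proper}) is phrased for $t$ near $1$, not near some interior $t_0$. Second, the ``extract $c$, then reparametrize so $B(t)-b<\pi$ on $[0,c]$'' step is circular in a way that reparametrization cannot fix: the constant $c$ from Proposition~\ref{prop:proper} is attached to the geometric path, and relabeling $t$ relabels $c$ correspondingly, so nothing is gained; genuinely changing the curve family changes $c$ in a way you have not controlled. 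The clean fix is to notice that Theorem~\ref{th:gap3} makes no assumption on the direction of the limiting strip, so you should not copy condition~(4) of \eqref{eq:Gamma(t)} verbatim (which uses the $y$-infinite strip $[-d,d]\times\RR$). Instead take $\Gamma_{\rm out}(t)=\Gamma_{A(t),B(t)}$ with $A(t)\nearrow\infty$ and $B(t)\nearrow b+\pi$ monotonically as $t\to 1^-$; then $\Gamma_{\rm out}(t)$ converges to the boundary of the $x$-infinite strip $\RR\times(-(b+\pi),b+\pi)$, which is at distance exactly $\pi$ from $\Gamma_{a,b}$, so Theorem~\ref{th:gap3} applies and Proposition~\ref{prop:proper} gives $t(M)\le c<1$ as before. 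With this choice, $B(t)\in(b,b+\pi)$ and $A(t)\in(a,\infty)$ hold for \emph{every} $t\in(0,1)$, so the desired ranges for $A$ and $B$ are automatic, with no need to reason about $c$ at all. Once Step~1 is corrected this way, your Steps~2 and~3 go through verbatim and the proof is complete.
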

Note that there is no statement of uniqueness for $M_{a,b}(x)$ or for the values $A$ and $B$.

\section{Annuloids  as limits of compact translating annuli with rectangular boundaries}\label{sec:Aa}
 In Section~\ref{sec:DeltaWings}, the $\Delta$-wings  were constructed as limits of sequences of graphical  disks. Here we take limits of sequences of compact annular translators,
$M_{a,b}(x)$, whose existence was established in Theorem~\ref{th:a,b,x}. 

Let  
$$M_i=M_{a_i,b_i}(x_i), \mbox{ with \,} b_i\rightarrow b,
 \,\, x_i\rightarrow\hat x,
  \mbox{\, and \,} a_i\rightarrow\infty.$$ 
Define $z_i=z(M_i)$  to be the largest value of $z$ such that $(x_i,0,z)\in M_i$.

\begin{proposition}\label{prop:Mbx} Let
\[
  M_i':= M_i - (0, 0, z_i).
\]
 A subsequence converges smoothly to a complete translator $M$ that has the following properties  
\begin{enumerate}
 \item $M$ is symmetric with respect to the planes $\{x=0\}$ and $\{y=0\}$.
\item $x(M)=\hat x$, and $(\hat x, 0,0)\in M$.

 \end{enumerate}
 \end{proposition}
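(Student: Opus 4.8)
## Proof proposal for Proposition~\ref{prop:Mbx}

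The plan is to extract a smooth subsequential limit from the vertically renormalized surfaces $M_i'$ using the uniform area and curvature estimates available for elements of $\Cc$, and then to transfer the symmetries and the necksize normalization to the limit. First I would invoke the estimates of Remark~\ref{rmk:area-estimate}(2): each $M_i\in\Cc$ (indeed, each $M_i = M_{a_i,b_i}(x_i)\in\Rr$, which by Definition~\ref{Rr-definition} is a limit of elements of $\Cc$, so the estimates pass to it and to further limits) satisfies $\area(M_i\cap\BB(p,r))\le c_1 r^2$ and a curvature bound $|A(M_i,p)|\min\{1,\dist(p,\partial M_i),\dist(p,Z)\}\le c_2$. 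After the translation by $-(0,0,z_i)$, the boundary $\partial M_i'$ has been pushed down to height $-z_i\to-\infty$ (this needs the observation that $z_i\to\infty$, which I address below), so on any fixed ball $\BB(0,R)$ the quantity $\dist(p,\partial M_i')$ is eventually $\ge 1$; thus the curvature bound on $M_i'\cap\BB(0,R)$ degenerates only near $Z$. Away from $Z$ we therefore get uniform local area and curvature bounds, so by standard compactness for sequences of surfaces with bounded second fundamental form (Arzel\`a--Ascoli on the local graph representations, plus Schauder estimates from the quasilinear translator equation to upgrade $C^{1,\alpha}$ convergence to smooth convergence), a subsequence of $M_i'$ converges smoothly on compact subsets of $\RR^3\setminus Z$ to a translator $M$. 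To handle a neighborhood of $Z$: one shows $M_i'$ stays uniformly away from $Z$ near height $0$ — indeed $\dist(Z,M_i)= x(M_i)$ restricted to $\{y=0\}$ is $x_i\to\hat x>0$, and an embeddedness/barrier argument (comparing with a vertical cylinder of radius slightly less than $\hat x$, or using that the surfaces in $\Cc$ are disjoint from $Z$ together with the curvature estimate) keeps $M_i'$ out of a fixed solid cylinder around $Z$ in a slab about $z=0$; hence the limit $M$ is actually smooth across that region too, and the convergence is smooth on all of $\RR^3$.

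Completeness of $M$: the surfaces $M_i$ are properly embedded annuli in $\{z\ge 0\}$ with $\partial M_i=\Gamma_{a_i,b_i}\sqcup\Gamma_{A_i,B_i}$, and $z_i=z(M_i)$ is by definition the top height over the point $(x_i,0)$. Because $M_i$ lies in a slab $\{|y|\le B_i\}$ with $B_i\in(b_i,b_i+\pi)$ bounded, and because a compact translator in a bounded slab cannot be too tall unless its horizontal extent in the $x$-direction is large (quantitatively, via the grim-reaper comparison in the $y$-direction combined with the fact that $a_i\to\infty$ forces the surface to extend far in $x$) — more directly, the height at the neck must tend to infinity as $a_i\to\infty$ because otherwise a subsequence of the \emph{un-renormalized} $M_i$ would converge to a complete translator contained in a slab of width $<2\pi$ with bounded height over $\{y=0\}$, contradicting Theorem~\ref{th:narrow-slabs} (or, if one prefers to avoid that theorem, contradicting Remark~\ref{remark: no-thin-graphs} after establishing graphicality). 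So $z_i\to\infty$, which is exactly what was used above, and moreover it guarantees $\partial M_i'\to\emptyset$, so the limit $M$ is boundaryless, and a boundaryless smooth limit of properly embedded surfaces with uniform local area bounds is itself properly embedded and complete.

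It remains to record the two listed properties. For (1): each $M_i$ is invariant under reflection in $\{x=0\}$ and $\{y=0\}$ by the definition of $\Cc$ (condition (2) of Definition~\ref{def:Acomp}), and the vertical translation $-(0,0,z_i)$ commutes with both reflections since they fix the $z$-axis pointwise only on $\{x=y=0\}$ but in any case act linearly preserving the $z$-coordinate; hence each $M_i'$ is invariant under both reflections, and invariance passes to the smooth limit $M$. For (2): the point $(x_i,0,z_i)\in M_i$ becomes $(x_i,0,0)\in M_i'$, and since $x_i\to\hat x$ and the convergence is smooth near this point (it is bounded away from $Z$), we get $(\hat x,0,0)\in M$; and $z_i$ being the \emph{largest} $z$ with $(x_i,0,z)\in M_i$ means $(x_i,0,t)\notin M_i$ for $t>0$ after renormalization, i.e.\ $(\hat x,0,t)\notin M$ for $t>0$, so in particular $M\cap\{y=0\}$ meets the ray from $Z$ through $(\hat x,0,0)$ exactly at that point at the top — combined with the reflection symmetry in $\{x=0\}$ and embeddedness this forces $x(M)=\dist(Z,M\cap\{y=0\})=\hat x$. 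The main obstacle I expect is the analysis near the $z$-axis: one must rule out that the renormalized necks pinch onto $Z$ or that curvature concentrates there, and this is where the lower bound $\dist(M_i',Z)\ge c>0$ near $z=0$ — ultimately a consequence of $x_i\to\hat x>0$ together with embeddedness and the curvature estimate — does the essential work; everything else is routine compactness and symmetry bookkeeping.
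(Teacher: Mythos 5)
Your overall plan (extract a smooth subsequential limit from $M_i'$ using the area and curvature estimates of Remark~\ref{rmk:area-estimate}, transfer the symmetries and necksize, and reduce everything to showing $z_i\to\infty$ so that the boundary drifts away) matches the paper's strategy. The symmetry transfer, the identification of $(\hat x,0,0)\in M$, and the use of $x_i\to\hat x>0$ to keep curvature from concentrating on $Z$ are all correct. But there are two real gaps.

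The first is in the argument that $z_i\to\infty$. You appeal to Theorem~\ref{th:narrow-slabs}, which requires the slab $\{|y|<B\}$ to have half-width $B<\pi$. In the situation at hand, $B_i\in(b_i,b_i+\pi)$ with $b_i\to b\ge\pi/2$, so the limiting half-width $B$ lies in $[b, b+\pi]$ and can easily exceed $\pi$; in that regime Theorem~\ref{th:narrow-slabs} says nothing. You also describe the hypothetical limit (when $z_i\not\to\infty$) as a ``complete translator,'' but it is not: $\partial M_i'$ sits at height $-z_i\to-\hat z$, so the limit is a translator \emph{with boundary} lying in a horizontal plane, with $\partial M_i'$ converging to lines at $|y|=b$ (from the inner rectangles, since $a_i\to\infty$) and at $|y|=B$ (from the outer ones). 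The paper exploits precisely this: the limit $M''$ has no boundary in the open slab $\{|y|<b\}$ of width $2b\ge\pi$, so Proposition~\ref{prop:gap} forces $M''$ to be disjoint from (a width-$\pi$ sub-strip of) that slab --- contradicting $(\hat x,0,0)\in M''$, since $y=0$ is in the slab. This is the correct mechanism, it is insensitive to how large $B$ is, and it is where the hypothesis $b\ge\pi/2$ is actually used. Your proposed fallback (``Remark~\ref{remark: no-thin-graphs} after establishing graphicality'') has the same width obstruction, and graphicality would itself need to be established.

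The second gap is the verification of $x(M)=\hat x$. Your argument reasons from the maximality of $z_i$, concluding something about the vertical ray above $(\hat x,0,0)$; but Definition~\ref{x(M)} defines $x(M)$ as the distance from the $z$-axis to $M\cap\{y=0\}$, which is a statement about all of $M\cap\{y=0\}$, not about one vertical ray. The maximality of $z_i$ is not what is needed. What gives $x(M)\ge\hat x$ is that $x(\cdot)$ is invariant under vertical translation, so $x(M_i')=x(M_i)=x_i\to\hat x$, and each $M_i'$ is symmetric in $\{y=0\}$; smooth convergence then implies $M\cap\{y=0\}$ is a limit of the curves $M_i'\cap\{y=0\}$, all of which stay at distance $\ge x_i$ from $Z$. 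Combined with $(\hat x,0,0)\in M$, which gives $x(M)\le\hat x$, one gets equality. The appeal to the reflection in $\{x=0\}$ and to embeddedness at the end of your argument does not supply the missing step.
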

\begin{proof}First, we will show that
\begin{equation}\label{pushed-up}
  z_i\to \infty.
\end{equation}

Note that $ (x_i, 0, 0)\in M_i'$.  If \eqref{pushed-up}  fails, 
then (after passing to a subsequence) $z(M_i)$ would 
converge to a finite limit $\hat z$ and $M'_i$ would converge to a translator $M''$ in $\{z\ge 0\}$.(By the curvature and area estimates for these surfaces (Remark~\ref{rmk:area-estimate} (2)), the convergence is smooth in $\{z>0\}$.)
Moreover,  $(\hat x,0,0)\in M''$.  The boundary of $M''$ is contained in the plane $\{z=-\hat z\}$ and lies outside the strip  $\{|y|<b\}$.
 Therefore $M''$ has no boundary in the
slab $\{|y|<b\}$, and we are assuming $b\geq \pi/2$.   By Theorem~\ref{th:gap3}, $M''$ is disjoint from that slab, which is impossible since
$(\hat x, 0, 0)\in M''$.  Thus $z_i\to \infty$.

It follows now that ,
\begin{equation}\label{boundary-away}
  \dist( 0, \partial M_i') \to \infty.
\end{equation}
Consequently, the curvature and area bounds in Remark~\ref{rmk:area-estimate} (2) give smooth convergence (after passing to a subsequence)
of $M_i'$ to a limit translator $M$.  By~\eqref{boundary-away}, $M$ has no boundary.
  From the construction, $(\hat x, 0,0)\in M$.
Also, $M$ is disjoint from the strip $\{0\}\times (-\hat x, \hat x)\times \RR$.  Thus $x(M)=\hat x$. Furthermore, the symmetry of all the surfaces $M_i'$ is inherited by the subsequential limit surface $M$.
\end{proof}

\begin{definition}  \label{def:Aa}

Let $\pi/2\leq b\leq B\leq b+\pi$.  
We define $\Aa (b,B,\hat{ x})$  to be the collection of  limit surfaces $M$ in Proposition~\ref{prop:Mbx} with the property that $b(M_i)\to b$,  $x(M_i)\to \hat x$, and $B(M_i)\to B$ .  (See Theorem~\ref{th:a,b,x}   for the definition of $B(M_i)$). We let
$$\Aa:=\bigcup_{b, B,\hat x} \Aa (b,B,\hat{ x}).$$

\end{definition}

\begin{theorem}\label{th:Aa-as-limit}  
The surfaces in $\Aa$ are annuloids (that is, they satisfy all the conditions of Definition~\ref{def: annuloid})  If $M\in\Aa (b,B, \hat{ x})$, it has inner width $b$, outer width $B$ and necksize $\hat x$.
Moreover:
\begin{enumerate}

\item[i.] $M\cap\{x>\hat x\}$ consists of two simply connected components, $M_{lower}$ and $M_{upper}$.
\item[ii.] $M\cap\{y=0\}\cap\{x\geq0\}$ is a connected curve that consists of the graphs of two functions,   $\phi_{lower}$ and  $\phi_{upper}$, both defined on the interval $[\hat x,\infty)$.
	\begin{itemize}
	\item [$\cdot$] $\phi_{lower}(\hat x)=\phi_{upper}(\hat x)$, and $\phi_{lower}( x)<\phi_{upper} (x)$ for $x>\hat x$.
	\item [$\cdot$] The graph of $\phi_{lower}$ lies in $M_{lower}$ and the graph of  $\phi_{upper}$ lies in $M_{upper}$.
	\end{itemize}
	
\noindent As $x\rightarrow\infty$,
\item[iii.] $M_{lower}-(x,0,\phi_{lower}(x))$ converges to the negatively tilted grim reaper surface defined over the strip $\RR\times (-b,b)$ 
\item[iv.]  $M_{upper}-(x,0,\phi_{upper}(x))$ converges to a  tilted grim reaper surface defined over the strip $\RR\times (-B,B)$. The tilt is positive if $B>b$ and can be either positive or negative if $B=b$.
\end{enumerate}
\end{theorem}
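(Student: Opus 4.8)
The plan is to start from Proposition~\ref{prop:Mbx}, which already provides a smooth subsequential limit $M$ of the translated annuli $M_i'=M_i-(0,0,z_i)$ that is complete, properly embedded, invariant under reflection in $\{x=0\}$ and in $\{y=0\}$, contains $(\hat x,0,0)$, satisfies $x(M)=\hat x$, and (by the argument in its proof) is disjoint from the open solid slab $\{0\}\times(-\hat x,\hat x)\times\RR$, hence (as $\hat x>0$) from the $Z$-axis. This already yields conditions (3) and (6) of Definition~\ref{def: annuloid}. Everything else is reduced to understanding $M$ far from the neck: I would first establish the asymptotic description of $M$, which gives claims i--iv and conditions (4)--(5), and then deduce from it the slab condition (2), the identification of the widths, and the annulus property (1), using in addition that the convergence $M_i'\to M$ has multiplicity one.

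For the asymptotic step I would work on the compact surfaces $M_i$ before passing to the limit. Fix $x_0>\hat x$. On $\{x\ge x_0\}$ the $M_i$ are far from the neck, so the curvature and area bounds of Remark~\ref{rmk:area-estimate}(2), together with the gradient estimate for translating graphs over strips used in the construction of the $\Delta$-wings (Section~\ref{sec:DeltaWings}) --- the one depending only on an upper bound for the $y$-width and a lower bound for the distance to the long sides --- should force $M_i\cap\{x\ge x_0\}$ to consist of two graphical pieces, one over a neighbourhood of the long sides of the inner rectangle $\Gamma_{a_i,b_i}$ and one over those of the outer rectangle $\Gamma_{A_i,B_i}$. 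Sending $a_i,A_i\to\infty$ and using $b_i\to b$, $B_i\to B$, these become two components of $M\cap\{x>\hat x\}$ --- the inner (narrow) one $M_{lower}$ and the outer (wide) one $M_{upper}$, as in Figure~\ref{fig:capped-uncapped} --- each a disk (a graph over a half-strip), which is claim~i. Translating each component by $-(x,0,\phi_\bullet(x))$ along its $\{y=0\}$ trace and letting $x\to\infty$, the area and curvature bounds (with the gradient estimate now applied over ever-larger $x$-ranges) give complete translating graphs over the full strips $\RR\times(-b,b)$ and $\RR\times(-B,B)$; by the classification Theorem~\ref{th:graphs} these are tilted grim reaper surfaces or $\Delta$-wings, and in either case re-centring at $x\to\infty$ produces a tilted grim reaper by Proposition~\ref{strictly convex graph}, so the limits are tilted grim reaper surfaces --- claims iii and iv. The tilt of the lower sheet must be negative, and that of the upper one positive when $B>b$, for otherwise $\phi_{upper}-\phi_{lower}$ would tend to $-\infty$, forcing the two embedded sheets to cross, while when $B=b$ both signs survive (the capped/uncapped dichotomy of Figure~\ref{fig:capped-uncapped}). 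Restricting the whole picture to the mirror plane $\{y=0\}$ gives claim~ii: there $M$ is a curve lying in $\{x\ge\hat x\}$ that touches $\{x=\hat x\}$ only at $(\hat x,0,0)$, its unique closest point to $Z$ on that slice, so it is the union of the graphs of two functions $\phi_{lower}<\phi_{upper}$ on $[\hat x,\infty)$ meeting at $x=\hat x$. Finally, the four vertical half-planes that are the walls of the limiting grim reaper cylinders are exactly $\{y=\pm b\}$ and $\{y=\pm B\}$, so $M+(0,0,z)$ converges to these planes as $z\to\infty$, while near $Z$ the surface lies in the bounded-above neck region, so $M-(0,0,z)\to\varnothing$; in particular $b(M)=b$ and $B(M)=B$, giving (4)--(5).

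For the remaining conditions: each $M_i$ is connected with boundary in $\{|y|\le B_i\}$, and since the vertical planes $\{y=\pm c\}$ are themselves translators, the strong maximum principle forces $M_i\subset\{|y|\le B_i\}$; vertical translation does not change $y$-coordinates, so $M\subset\{|y|\le B\}$, and the sharpness of $B$ (the parenthetical remark in Definition~\ref{def: annuloid}) follows from the asymptotic planes $\{y=\pm B\}$. As for the topology: because $\hat x>0$ the neck of $M_i$ does not collapse, so the convergence $M_i'\to M$ has multiplicity one and $M$ carries a homotopically nontrivial simple closed curve; combining this with the structure above --- $M_{lower}$ and $M_{upper}$, together with their mirror images in $\{x=0\}$, form two topological planes joined through the neck region, which is the multiplicity-one limit of the annular necks of the $M_i$ --- a cut-and-paste argument, organised by the reflection symmetries and an Euler-characteristic count, identifies $M$ with two planes joined by a tube, i.e.\ with an annulus. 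With the widths already computed, $M$ is therefore an annuloid of inner width $b$, outer width $B$ and necksize $\hat x$, completing the proof.

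I expect the asymptotic step to be the main obstacle: concretely, proving that $M\cap\{x>\hat x\}$ breaks into \emph{exactly} two graphical sheets over strips of the \emph{correct} widths $b$ and $B$ (no thinner strips, no extra sheets), and handling the tilt of the upper sheet in the borderline case $B=b$. The gradient estimates from the $\Delta$-wing construction and the classification of complete translating graphs carry the weight here, and it is this same structural information that makes the topological bookkeeping in the last step go through.
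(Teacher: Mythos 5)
The paper does not actually prove Theorem~\ref{th:Aa-as-limit} here: after stating it, the text explicitly defers with ``These properties and more are proved in \cite{annuloids},'' and then spends the rest of the section describing the machinery that proof rests on, namely the Morse--Rad\'o critical-point bounds of Proposition~\ref{prop:N-estimates}, $\mathsf{N}(F_\vv|M)\le 2$ for every horizontal direction $\vv$ and $\mathsf{N}(H|M)\le 8$ for every grim-reaper foliation function $H$, together with the lower-semicontinuity Theorem~\ref{semicontinuity-theorem}, the structure theorem \cite{annuloids}*{Theorem~5.4}, and Lemma~\ref{lem:graphical}. In that scheme, the $\mathsf{N}$-bounds are the engine: they say that for each $\vv$ there are at most two interior points of $M$ at which the tangent plane contains $\vv$, and at most eight points of tangency with any grim-reaper foliation; this is what forces any translate-limit of $M$ to be a union of vertical planes and translating graphs, controls the number of sheets, and (via Lemma~\ref{lem:graphical}) upgrades ``no vertical tangent planes'' to ``graph.''

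Your proposal takes a genuinely different route and, as you yourself flag, it has a gap precisely at the load-bearing step. You argue that the curvature/area bounds of Remark~\ref{rmk:area-estimate} plus ``the gradient estimate for translating graphs over strips'' should force $M_i\cap\{x\ge x_0\}$ into exactly two graphical sheets, and you then invoke the classification Theorem~\ref{th:graphs} to identify the $x\to\infty$ limits. But gradient estimates for graphs presuppose that one already has a graph; they provide no mechanism to exclude a non-graphical piece at large $x$ (say another catenoid-like neck), a superfluous component, or a sheet over a strip narrower than $\RR\times(-b,b)$, and Theorem~\ref{th:graphs} only classifies surfaces once they are known to be complete graphs. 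The thing that rules all of this out in the paper's scheme is exactly $\mathsf{N}(F_\vv|M)\le 2$: the $x$-coordinate (and every horizontal linear function) has at most two interior critical points on $M$, which is what pins down claim~(ii) and, combined with $\mathsf{N}(H|M)\le 8$, yields the two-sheet decomposition of claim~(i), the grim-reaper asymptotics of (iii)--(iv), and the convergence $M+(0,0,z)\to$ four vertical planes needed for Definition~\ref{def: annuloid}(4). Your subsequent topological bookkeeping (multiplicity one, cut-and-paste, Euler characteristic) is plausible, but it takes the two-sheet decomposition as input, so it inherits the same gap. In short: the missing idea is the Morse--Rad\'o critical-point count; without it, the step ``exactly two graphical sheets over strips of widths $2b$ and $2B$'' is asserted rather than proved.
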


 \noindent Since the  annuloids  $\Aa$ 
 are symmetric with respect to reflection in the coordinate plane, $\{x=0\}$, there are  parallel statements for $M\cap\{x\leq 0\}$.

\vspace{0.2in}

These properties and more are proved in \cite{annuloids}. 
In the remainder of this section, we describe one of the techniques used in establishing that these limit surfaces are  in $\Aa$ and have these properties. For example, the next section indicates how one approaches 
Property~4 in Definition~\ref{def: annuloid}  (that the limit of vertical translates are planes) and properties~$(iii)$ and $(iv)$ of Theorem~\ref{th:Aa-as-limit} above.

\section*{Minimal Foliation Functions}
Let $M$ be a translator.  There are a number of standard foliations $\Ff$ of $\RR^3$  of open subsets
of $W$ of $\RR^3$ by translators.  For example, $\Ff$ could be a family of parallel vertical planes with $W=\RR^3$, or $\Ff$ could be all vertical translates
of a tilted grim reaper surface or of a $\Delta$-wing, with $W$ equal to a vertical slab of width $b\geq \pi/2$.
  Motivated by problems in the study of minimal annuli, we developed  in \cite{morse-rado} general results that allow one to bound the number of points of tangency (counting multiplicity) of $M$ with the leaves of $\Ff$ in terms of the boundary data and the Euler characteristic of $M\cap W$.
    Such bounds can be used to prove  geometric results.

\begin{definition} Let $\Ff$ be a foliation of an open subset $W$ of a Riemannian $3$-manifold $N$ by minimal surfaces. 
For a minimal surface $M\subset N$, a {\bf critical point of $M$ with respect to $\Ff$} is an interior point $p$ where a leaf of $\Ff$ is tangent to $M$
but $M$ is not equal to that leaf in a neighborhood of $p$. The multiplicity of the critical point is the order of contact of $M$ with the leaf. We denote by

\[
   \mathsf{N}(\Ff|M)
\]
 the total number of interior critical points, counting multiplicity.
\end{definition}
\begin{definition}
 A {\bf minimal foliation function} is a continuous function $F$ from an open subset $W\subset N$ to an open interval $I\subset \RR$
such that 
\begin{itemize}
\item $ F^{-1}(t)$ is a minimal surface.
\item $ F^{-1}(t)$ is in the closures of $\{F>t\}$ and $\{F<t\}$.
\end{itemize}
We define  
\[
   \mathsf{N}(F|M)= \mathsf{N}(\Ff|M),
\]
where $\Ff$ is the foliation whose leaves are the level sets of $\Ff$.
\end{definition}

\begin{theorem}[\cite{morse-rado}*{Theorem~4}]
Suppose $F:W\subset N\rightarrow I$ is a proper minimal foliation function on an open domain $W$ of a Riemannian manifold $N$  
and that  $M\subset N$  is a minimal surface with finite genus.  Assume that $(\partial M) \cap \{F < t\}$ is empty for some $ t\in I$.
Let 
\begin{itemize}
\item $Q$ equal the set of local minima of $F|\partial M$ (presumed to be finite).
\item $A $ equal the set of local minima of $F|\partial M$ that are not local minima of $F|M$.
\item $\chi (M\cap W)$ be  the Euler characteristic of $M\cap W$. 
  \end{itemize}
Then
\begin{equation}\label{critical point-est}
\mathsf{N}(F| M)\leq |Q|-|A|-\chi (M\cap W),
\end{equation}
where $|\cdot|$ denotes the number of elements in a set.
\end{theorem}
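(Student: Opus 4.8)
The plan is to count critical points via a Morse-theoretic argument adapted to the foliated setting, converting the geometric tangency count $\mathsf{N}(F|M)$ into an Euler-characteristic computation on the sublevel sets of $F|M$. First I would observe that since $(\partial M)\cap\{F<t\}$ is empty for some $t\in I$ and $F$ is proper, there is a value $t_0$ below which $M\cap W$ is empty, while for $t$ near the top of $I$ all of $M\cap W$ is captured; so $F|(M\cap W)$ is a proper function and we may study the topology change of $M_s:=(M\cap W)\cap\{F\le s\}$ as $s$ increases. The critical points of $F|M$ come in two flavors: interior tangencies with leaves (these are exactly the points counted by $\mathsf{N}(F|M)$, once multiplicity is accounted for) and the local minima of $F|\partial M$ that fail to be local minima of $F|M$, which is the set $A$ — each such point is a ``boundary critical point'' where a new piece of $M$ is born that is not detected by interior critical points. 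The local minima of $F|\partial M$ that \emph{are} local minima of $F|M$ contribute to the birth of components but are bookkept in $Q$.

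The key technical input is that because each leaf $F^{-1}(t)$ is a minimal surface and $F^{-1}(t)$ lies in the closure of both $\{F>t\}$ and $\{F<t\}$, an interior critical point of $F|M$ of multiplicity $m$ contributes, in a Morse-theory-with-minimal-surfaces sense, like a critical point of index contributing $m$ to the alternating count; this is the standard fact (from the theory behind \cite{morse-rado}, ultimately a consequence of the strong maximum principle and the structure of nodal sets of the difference of two solutions) that near such a point $M$ and the leaf cross along $2m$ arcs, so passing the critical level attaches cells whose net effect on $\chi$ is $-m$ (a ``saddle of multiplicity $m$''). Meanwhile each point of $A$ behaves like an index-$0$ critical point that attaches a $0$-cell not already present, contributing $+1$ to $\chi$ from a boundary handle; and each local minimum of $F|\partial M$ that is also a local minimum of $F|M$ likewise contributes $+1$. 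Summing the Morse(-type) relation over all critical levels and using that $M_s$ starts empty and ends equal to $M\cap W$, one gets
\[
\chi(M\cap W) = |Q| - |A| - \mathsf{N}(F|M) + (\text{correction terms}),
\]
where the correction terms vanish provided $M$ has finite genus (which rules out infinitely many handles and guarantees the alternating sum converges and that there are no hidden index-$2$ contributions from interior maxima — these are excluded because a minimal surface cannot have an interior point of $F|M$ that is a strict local max relative to the foliation, again by the maximum principle). Rearranging gives precisely $\mathsf{N}(F|M)\le |Q|-|A|-\chi(M\cap W)$, with equality in the generic case and the inequality absorbing any degenerate tangencies.

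I expect the main obstacle to be making the Morse-theory bookkeeping rigorous in the presence of (a) noncompactness of $M\cap W$ — handled by properness of $F$ plus the finite-genus hypothesis, which bounds the total topology — and (b) the possibility of degenerate or non-isolated critical points, where the ``multiplicity'' must be defined via order of contact and one must verify that the local model (crossing along $2m$ arcs) persists; this is exactly where one invokes that $M$ is not a leaf, so the difference function $F|M - t$ has an isolated zero of finite order at $p$ by unique continuation, giving the nodal-arc picture. A secondary subtlety is the precise handling of boundary minima that are \emph{not} interior minima of $F|M$: one must check that near such a point the surface genuinely dips below the level locally on one side, so that the handle attachment is a $0$-cell rather than something that cancels against an adjacent saddle — this is what distinguishes $A$ from $Q\setminus A$ and is the source of the $-|A|$ term. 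Since the full argument is carried out in \cite{morse-rado}, in the paper one would simply cite Theorem~4 there; the sketch above indicates why the stated inequality has exactly this shape.
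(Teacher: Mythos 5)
First, a point of comparison: this paper does not prove the statement at all. It is quoted verbatim from \cite{morse-rado}*{Theorem~4} and used as a black box, so there is no in-paper argument to measure your sketch against; your closing remark that one would simply cite that reference is exactly what the authors do. Judged on its own terms, your outline has the right skeleton: run Morse theory on the sublevel sets of $F|M$, use the strong maximum principle (a minimal surface tangent to a leaf and locally on one side of it must coincide with the leaf) to rule out interior local extrema of $F|M$, and use the nodal-set structure of the difference of two minimal graphs to see that an interior tangency of multiplicity $m$ is a generalized saddle dropping $\chi$ by $m$.

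There is, however, a genuine gap in your bookkeeping, and it sits precisely where the theorem's content lies: the $-|A|$ term. You assign a $+1$ (birth of a component) both to the points of $A$ and to the points of $Q\setminus A$, which totals $+|Q|$; summing your own contributions then yields only $\mathsf{N}(F|M)\le |Q|-\chi(M\cap W)$, and the displayed formula with $|Q|-|A|$ does not follow from the accounting you gave. The correct statement is the opposite of what you wrote for $A$: a new component of $\{F|M\le s\}$ is born exactly at a local minimum of $F|M$, and since interior local minima are impossible (maximum principle again), these are exactly the points of $Q\setminus A$; a point of $A$ is by definition \emph{not} a local minimum of $F|M$, the surface already dips below that level nearby, and the local model shows the sublevel set changes by a homotopy equivalence or by attaching a $1$-cell, i.e.\ contributes $0$ or $-1$, never $+1$. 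Relatedly, you assert that the remaining ``correction terms'' vanish and attribute the inequality to degenerate tangencies; in fact the other boundary critical points (local maxima and saddle-type points of $F|\partial M$) need not contribute zero — one must check they contribute nonpositively to $\chi$, and that sign check is what turns the Morse identity into the stated inequality. A smaller issue: the hypothesis is that $\partial M\cap\{F<t\}$ is empty, not that $M\cap\{F<t_0\}$ is eventually empty, so the claim that the sublevel sets start out empty needs an argument (again via the maximum principle and properness) rather than being read off from the hypotheses.
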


We want to apply this theorem to the surfaces of Proposition~\ref{prop:Mbx} by computing or estimating  $\mathsf{N}(F| M)$
for finite annuli, and then taking limits. For that we use the following lower-semicontinuity result.
\begin{theorem}\label{semicontinuity-theorem}
Suppose that $M_i$ are minimal surfaces in a $3$-manifold $N$ and that $M_i$ converges smoothly to a minimal surface $M$.
Suppose $F_i$ are minimal foliation functions defined on open subsets $W_i\subset N$ such that that the $F_i$
converge smoothly to a minimal-foliation function $F$ defined on an open subset $W$ of $N$.  Then
\[
   \mathsf{N}(F|M) \le \liminf \mathsf{N}(F_i|M_i).
\]
In particular, if $p$ is a critical point of $(F,M)$, then $p$ is a limit of critical points $p_i$ of $(F_i,M_i)$.
\end{theorem}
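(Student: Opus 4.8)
The plan is to reduce the statement to a purely local fact: at each critical point $p$ of $(F,M)$ with multiplicity $m$, there are at least $m$ critical points of $(F_i,M_i)$ (counted with multiplicity) accumulating at $p$ for all large $i$. Once this local statement is established, the global inequality follows by choosing finitely many disjoint neighborhoods around a finite collection of critical points of $(F,M)$ whose multiplicities sum to any number not exceeding $\mathsf{N}(F|M)$ (if $\mathsf{N}(F|M)=\infty$, one takes arbitrarily large finite subsums), and then summing the local lower bounds. The ``in particular'' clause is immediate once one knows critical points persist locally.

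First I would set up convenient coordinates near a critical point $p$ of $(F,M)$. Since $p$ is an interior point where the leaf $L=F^{-1}(F(p))$ is tangent to $M$ but $M\ne L$ nearby, I would write $M$ and $L$ locally as graphs over the common tangent plane $T_pM$, say $M=\graph(u)$ and $L=\graph(v)$ with $u(0)=v(0)=0$ and $Du(0)=Dv(0)=0$. The order of contact $m$ is the vanishing order of $w:=u-v$ at $0$. The key analytic input is that $w$ satisfies a second-order linear elliptic equation: subtracting the minimal surface equations for $M$ and $L$ (in the metric of $N$, which here is $g_{ij}=e^{-z}\delta_{ij}$ or any smooth metric) gives a linear elliptic PDE for $w$ with smooth coefficients. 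By the Bers–Hartman–Wintner expansion (or Aleksandrov/Carleman-type unique continuation for elliptic equations in the plane), $w$ has a leading homogeneous harmonic-polynomial-like term of degree exactly $m$, and in particular its nodal set near $0$ consists of $m$ arcs through $0$, dividing a small disk into $2m$ sectors on which $w$ alternates sign. This is the standard picture used to count tangencies, and it is the structural fact I would invoke.

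Next I would transfer this to the approximating sequence. Writing $M_i=\graph(u_i)$ and $L_{i}(t)=F_i^{-1}(t)=\graph(v_{i,t})$ over the same tangent plane (legitimate for large $i$ by smooth convergence $M_i\to M$, $F_i\to F$), the difference $w_{i,t}:=u_i-v_{i,t}$ again solves a linear elliptic equation with coefficients converging smoothly to those of the equation for $w$, and $w_{i,t}\to w$ in $C^k$ on a fixed small disk $D_\rho$ as $i\to\infty$ and $t\to F(p)$. A critical point of $(F_i,M_i)$ in $D_\rho$ is a point where $w_{i,t}$ has a critical point with critical value $0$ for the appropriate $t$, i.e.\ a zero of $Dw_{i,t}$ at which $w_{i,t}$ vanishes; its multiplicity is the order of contact. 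I would count these via a degree/intersection-number argument: the ``critical-with-value-zero'' condition is the vanishing of the map $(x,t)\mapsto (w_{i,t}(x), D w_{i,t}(x))$, and the persistence of $m$ tangencies is exactly the statement that the local intersection multiplicity of this system on $\partial D_\rho$ (which is a homotopy invariant) equals $m$ for the limit and hence for all large $i$. Equivalently—and this is probably the cleanest route—one uses that the nodal set of $w_{i,t}$ on $\partial D_\rho$ has $2m$ sign changes for $t=F(p)$ by $C^0$-closeness to $w$, and a Morse-theory / Euler-characteristic count on $D_\rho$ (the planar Rado-type inequality, which is the main engine of \cite{morse-rado}) forces at least $m$ interior critical points of value $0$ counted with multiplicity inside $D_\rho$, for $t$ in a neighborhood of $F(p)$; since such a $t$ is realized by some leaf (using that $L_i$ passes near $p$), we get the $m$ critical points of $(F_i,M_i)$.

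The main obstacle I anticipate is the boundary step: one must ensure that on $\partial D_\rho$ the function $w$ itself has no zeros (so the sign-change count on the boundary is stable under perturbation) and that the relevant $t$-value is actually attained by a leaf of $\Ff_i$ near $p$—this uses that $F_i^{-1}(F(p))$ intersects a neighborhood of $p$ for large $i$, which follows from $F_i\to F$ smoothly and $F(p)$ lying in the interior of the interval $I$, together with the hypothesis that each level set $F_i^{-1}(t)$ meets both $\{F_i>t\}$ and $\{F_i<t\}$. A secondary technical point is that the multiplicity of a critical point must be counted correctly under perturbation (a degree-$m$ tangency can split into several tangencies whose multiplicities sum to $\ge m$, not $=m$); this is handled by the homotopy invariance of the local degree and is the reason the inequality goes the ``$\le \liminf$'' direction rather than being an equality. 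Apart from these points the argument is a routine combination of smooth convergence, interior elliptic estimates, and the planar tangency-counting machinery already developed in \cite{morse-rado}.
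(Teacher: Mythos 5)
The paper itself states this theorem without proof --- it is part of the Morse--Rad\'o toolkit quoted from \cite{morse-rado} --- so there is no in-text proof to compare your attempt against. Judged on its own, your overall architecture is the right one: isolate a critical point $p$ of $(F,M)$ of multiplicity $m$; write $M$, $M_i$ and the nearby leaves as graphs over $T_pM$; observe that the differences $w=u-v$ and $w_{i,t}=u_i-v_{i,t}$ solve second-order linear elliptic equations with \emph{no zeroth-order term} (being differences of solutions of the same quasilinear graph equation); invoke the Bers--Hartman--Wintner leading-order expansion; and run a persistence argument on a small disk $D_\rho$. The reduction from the global inequality to this local statement, and the ``in particular'' clause, are handled correctly.

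The gap is in closing the local count. Homotopy invariance of the local degree of the map $(x,t)\mapsto (w_{i,t}(x), Dw_{i,t}(x))$ --- equivalently, of $x\mapsto Df_i(x)$ where $f_i:=F_i|_{M_i}$ in coordinates --- only says that the \emph{signed} sum of local indices at the zeros in $D_\rho$ is conserved; by itself this permits cancellation and gives no lower bound on the number of tangencies. What forbids cancellation is exactly the elliptic structure you record but do not exploit: since $w_{i,t}$ has no zeroth-order term, the strong maximum principle bars interior extrema, so by BHW every tangency of $M_i$ with a leaf is a (possibly degenerate) saddle whose local index is $1-k\le -1$, with $k-1$ the order of contact. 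Only with this observation does degree conservation force the orders of contact inside $D_\rho$ to sum to exactly $m$ (not ``$\ge m$, not $=m$'' as you wrote; the inequality $\mathsf{N}(F|M)\le\liminf\mathsf{N}(F_i|M_i)$ comes from possible tangencies of $(F_i,M_i)$ far from every limit critical point, not from local splitting producing extras). A separate problem is with the route you label ``probably cleanest'': fixing the single level $t=F(p)$ and counting zeros-with-critical-value-zero of $w_{i,F(p)}$ does not work, because the $m$ tangencies of $(F_i,M_i)$ near $p$ generically occur at $m$ \emph{distinct} levels $t_j\ne F(p)$, so none of them are tangencies with the fixed leaf $F_i^{-1}(F(p))$; you must keep $t$ as a free variable (as in the degree map), or work directly with $f_i$, using that near each critical point $x_0$ one has $f_i-f_i(x_0)=\phi\cdot w_{i,f_i(x_0)}$ with $\phi>0$, so $Df_i$ inherits the index of $Dw_{i,f_i(x_0)}$ there.
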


\section*{Minimal foliation functions given by vertical planes and by translating graphs}
We will use two   minimal foliation functions. (Recall we are working in $\RR^3$ with the Ilmanen metric $g_{i,j}=e^{-z}\delta_{i,j}$.)

First, if $\vv$ is a horizontal unit vector in $\RR^3$, then
 the function 
\begin{equation}\label{definition-F_v}
\begin{aligned}
&F_\vv: \RR^3\to \RR, \\
&F_\vv(p) = \vv\cdot p
\end{aligned}
\end{equation}
is a minimal foliation function on $\RR^3$.
Second, suppose that $U$ is $\RR^2$ or an open strip in $\RR^2$ and that $h:U\to\RR$ is a function
whose graph is a complete translator.  Then
\begin{equation}\label{general-H}
\begin{aligned}
   &H: U\times \RR \to \RR, \\
   &H(x,y,z) = z - h(x,y)
\end{aligned}
\end{equation}
is a minimal foliation function.

\begin{proposition}\label{prop:N-estimates}The compact translating annuli $M=M_{a,b}(x)$  of Theorem~\ref{th:a,b,x} and the  complete embedded annuloids $M\in\Aa$ of Theorem~\ref{annuloids Aa} that are their limits
satisfy
\begin{enumerate}[\upshape (i)]
\item\label{2-bound-item} For each horizontal unit vector $\vv$, the function $F_\vv$ in \eqref{definition-F_v} satisfies $$\mathsf{N}(F_\vv|M)\le 2.$$
\item\label{4-bound-item} If $H$ is as in~\eqref{general-H}, 
  then $$\mathsf{N}(H|M)\le 8.$$
\end{enumerate} \end{proposition}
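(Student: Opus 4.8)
The plan is to apply the critical-point estimate \eqref{critical point-est} from \cite{morse-rado}*{Theorem~4} to $M = M_{a,b}(x)$ with each of the two foliation functions, obtain the bounds for the compact annuli, and then pass to the limit using the lower-semicontinuity statement (Theorem~\ref{semicontinuity-theorem}) to deduce the same bounds for the annuloids $M\in\Aa$. Throughout we use that $M$ is a compact annulus (so $\chi(M\cap W) \le 0$; more precisely $\chi = 0$ for the full annulus, but restricting to $W$ can only decrease it) and that its boundary consists of the two nested rectangles $\Gamma_{a,b}$ and $\Gamma_{A,B}$, on which the relevant functions can be analyzed explicitly because $\partial M \subset \{z=0\}$.

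First I would treat $F_\vv$. Here $W = \RR^3$, and the hypothesis $(\partial M)\cap\{F_\vv < t\}=\emptyset$ for some $t$ is automatic since $\partial M$ is compact. I need to count the local minima of $F_\vv|\partial M$. Since $\partial M$ is a pair of convex curves in a horizontal plane, and $F_\vv$ restricted to a horizontal plane is a linear function, its restriction to each convex closed curve has exactly one local minimum (and one local maximum), generically — and with care about the degenerate directions $\vv$ parallel to a side of a rectangle, one still gets at most one local minimum per component after perturbing $\vv$ or by a limiting argument. So $|Q|\le 2$. Dropping the nonnegative terms $|A|$ and using $-\chi(M\cap W) = -\chi(M) = 0$, inequality \eqref{critical point-est} gives $\mathsf{N}(F_\vv|M)\le |Q| \le 2$. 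The only subtlety is handling non-generic $\vv$; I would either invoke upper-semicontinuity of the critical-point count under rotation of $\vv$, or note that the two boundary rectangles are strictly convex limits are irrelevant here since the rectangles themselves are convex and a linear function on a rectangle boundary still has a unique minimizing vertex for generic direction and at most an edge (counted as one local-min set) otherwise.

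Next I would treat $H(x,y,z) = z - h(x,y)$, where $h$ is a complete translating graph over a strip or over $\RR^2$ (in the applications, a tilted grim reaper or a $\Delta$-wing or the bowl). Now $W = U\times\RR$ is a slab (or all of $\RR^3$), and one must check $M\cap W \ne \emptyset$ makes sense and that the emptiness hypothesis $(\partial M)\cap\{H<t\}=\emptyset$ holds for some $t$: since $\partial M\subset\{z=0\}$ and $h$ is bounded above on compact subsets (and the whole strip, for grim reapers), $H = -h(x,y)$ is bounded below on $\partial M$, so such a $t$ exists. The key count is again the number of local minima of $H|\partial M = -h|_{\partial M\cap W}$. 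Each boundary rectangle meets the slab $U\times\RR$ in at most two arcs (the two pieces lying over $U$), so $\partial M\cap W$ has at most four arcs; on each arc, $-h$ restricted to that planar arc — $h$ being a translating graph, hence real-analytic and, for the relevant strips, convex in $y$ with controlled behavior in $x$ — has a bounded number of local minima, and one shows this number is at most two per arc. Summing, $|Q|\le 8$, and then $\mathsf{N}(H|M) \le |Q| - |A| - \chi(M\cap W) \le 8 - 0 - 0 = 8$, using that $\chi(M\cap W)\ge 0$ would be needed — here I must be careful: $M\cap W$ is $M$ intersected with a slab, which is still an annulus or a union of disks/annuli, so $\chi(M\cap W)$ could be positive (disjoint disks). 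In fact the correct bookkeeping is that $-\chi(M\cap W)\le 0$ only if $M\cap W$ has no disk components, which one must verify, or else absorb a positive $\chi$ term — I expect the intended argument restricts attention to the component(s) of $M\cap W$ containing critical points and shows those are annular or that disk components contribute their own local minima to $|Q|$, keeping the total $\le 8$.

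The main obstacle, I expect, is precisely this combinatorial/topological accounting for $H$: getting the constant $8$ rather than something larger requires (a) a sharp count — at most two local minima of $-h$ on each of the (at most) four boundary arcs, which uses the specific geometry of tilted grim reapers and $\Delta$-wings (their level sets and the way a horizontal rectangle sits relative to the graph), and (b) controlling the Euler characteristic of $M\cap W$, i.e. ruling out or correctly charging disk components cut off by the slab. The $F_\vv$ bound of $2$ is comparatively routine. After the compact case, the passage to annuloids is immediate from Theorem~\ref{semicontinuity-theorem}: if $M\in\Aa$ is a smooth limit of $M_i = M_{a_i,b_i}(x_i)$ (translated vertically), the functions $F_\vv$ are literally constant along the sequence and the functions $H$ can be taken fixed as well, so $\mathsf{N}(F_\vv|M)\le\liminf\mathsf{N}(F_\vv|M_i)\le 2$ and likewise $\mathsf{N}(H|M)\le 8$.
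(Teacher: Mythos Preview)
Your overall strategy---apply the Morse--Rad\'o inequality~\eqref{critical point-est} to the compact annuli and then pass to the limit via Theorem~\ref{semicontinuity-theorem}---matches the paper. The difference is in how the boundary count is handled, and this matters.

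For~(i), the paper avoids your degenerate-$\vv$ worry entirely by \emph{not} counting on rectangles. Recall (Definition~\ref{Rr-definition}) that each $M_{a,b}(x)\in\Rr$ is itself a limit of annuli $M_n\in\Cc$ whose boundary curves are smooth and \emph{strictly} convex. On such a curve a linear function $F_\vv$ has exactly one local minimum, so $|Q|=2$ with no case analysis, and~\eqref{critical point-est} with $\chi(M_n)=0$ gives $\mathsf{N}(F_\vv|M_n)\le 2$. Then Theorem~\ref{semicontinuity-theorem} is applied \emph{twice}: first to pass from the $M_n$ to the rectangular-boundary annulus $M_{a,b}(x)$, and again to pass to the annuloid limit. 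Your direct attack on rectangles is salvageable, but the two-step limiting argument is cleaner and is the intended one.

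For~(ii), the paper does not give a proof at all; it simply refers to \cite{annuloids}*{Theorem~6.4}. Your sketch is in the right spirit, but the concerns you yourself raise are genuine obstacles, not just bookkeeping: the claim ``$\partial M\cap W$ has at most four arcs'' is not obvious for a strip $U$ at an arbitrary angle to the rectangles, the ``at most two local minima per arc'' is unproved, and $\chi(M\cap W)$ can in principle be negative (a genus-zero subsurface can be a pair of pants), so simply dropping that term is not justified. Getting the sharp constant $8$ requires the more careful analysis carried out in the cited reference; it is not expected that you reproduce it here.
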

\begin{proof} We  prove  the easier case $(i)$ here. For the other case, see Theorem~6.4 in \cite{annuloids}.  A compact annulus $M$ in $\Cc$ is bounded by a pair of strictly convex, nested and symmetric curves. By the definition of strict convexity, 
on each  one of the boundary curves there is precisely one local minimum of $F_\vv$. Since $\chi (M)=0$, it follows from \eqref{critical point-est} that 
$$\mathsf{N}(F_\vv|M)\le 2.$$ We may use the lower-semicontinuity property of Theorem~\ref{semicontinuity-theorem} to assert the same estimate if $M$ is bounded by rectangles as in 
Theorem~\ref{th:a,b,x}. Now taking a limit of compact annular translators with rectangular boundary curves, and using Theorem~\ref{semicontinuity-theorem} again, we get the same estimate for
annuloids that are limits of these sorts or compact annuli, as in Proposition~\ref{prop:Mbx} and Theorem~\ref{th:Aa-as-limit}. (Recall that we have the curvature and area estimates needed in Remark~\ref{rmk:area-estimate}.)

\end{proof}

\begin{theorem} [\cite{annuloids}*{Theorem~5.4}] Suppose $M$ is a properly embedded translator for which we have estimates $(i)$ and $(ii)$ of Proposition~\ref{prop:N-estimates}. and the area and curvature estimates of Remark~\ref{rmk:area-estimate}. Suppose $p_i$ is a sequence in $M$ that diverges in $\RR^3$. Then the sequence of surfaces
$M-p_i$
has a subsequence that converges smoothly to a limit surface $M'$  that is the union of vertical planes and translating graphs. If $p_i=(0,0,z_i)$, then the limit surface is the union of vertical planes.
\end{theorem}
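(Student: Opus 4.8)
The plan is to extract a smooth subsequential limit of $M-p_i$ and then use the two bounds $\mathsf N(F_\vv|M)\le 2$ and $\mathsf N(H|M)\le 8$ to show the limit is flat in a suitable sense. First I would invoke the curvature and area estimates of Remark~\ref{rmk:area-estimate}: since $p_i$ diverges, either $\dist(p_i,\partial M)\to\infty$ (in the complete case there is no boundary, so this is automatic) and $\dist(p_i,Z)$ stays bounded away from $0$ along a subsequence, or it does not — but in all cases the curvature of $M-p_i$ is uniformly bounded on compact sets, so by standard compactness a subsequence converges smoothly (with some finite multiplicity on compact sets, controlled by the area bound $\area((M-p_i)\cap \BB(0,r))\le c_1 r^2$) to a properly embedded translator $M'$, possibly with multiplicity, and the area and curvature estimates pass to the limit. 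The genus of $M'$ is finite (indeed $M$ has finite genus by hypothesis, since the estimates of Proposition~\ref{prop:N-estimates} are invoked for annuli and their limits), and $M'$ has no boundary.

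Next I would translate the foliation bounds to $M'$. For any horizontal unit vector $\vv$, the foliation functions $F_\vv$ are translation-invariant, so $F_\vv$ on $M-p_i$ is (up to an additive constant) the same as on $M$; by Theorem~\ref{semicontinuity-theorem}, $\mathsf N(F_\vv|M')\le\liminf \mathsf N(F_\vv|M-p_i)\le 2$. The key observation is that $M'$ has \emph{no boundary}, so a critical point of $F_\vv$ on $M'$ where $M'$ is strictly on one side of a leaf is a genuine interior tangency. I claim that a connected component $\Sigma$ of $M'$ with $\mathsf N(F_\vv|\Sigma)$ finite for \emph{every} horizontal $\vv$, no boundary, and finite genus, must be either a vertical plane or a translating graph: if $\Sigma$ is not a graph, then projection to the horizontal plane $\{z=0\}$ fails to be injective, which (together with the embeddedness and the maximum principle) forces a vertical line or a vertical tangent plane somewhere; sweeping by the planes $\{F_\vv=t\}$ and using that any downward-then-upward or upward-then-downward oscillation of $F_\vv|\Sigma$ produces critical points, one shows that unboundedly many critical points accumulate unless $\Sigma$ is a cylinder over a curve in a vertical plane or the graph itself is monotone in the sweeping direction. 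Combined with the second bound $\mathsf N(H|M')\le 8$ for $H$ coming from a complete translating graph (e.g.\ a tilted grim reaper surface or a bowl soliton), one pins down exactly which translators can occur: using Theorem~\ref{th:graphs} and Remark~\ref{rmk:flat-implies GR}, any non-planar graphical piece of $M'$ is, after an isometry, one of the surfaces in the classification, and a vertical-plane piece is a vertical plane.

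Finally, for the special case $p_i=(0,0,z_i)$, the translated surfaces $M-p_i$ stay within a fixed vertical slab $\{|y|<B'\}$ (since $M$ does, by the annuloid property or by hypothesis in the general statement), so the limit $M'$ also lies in $\{|y|<B'\}$. A translating graph over $\RR^2$ (a bowl soliton) does not fit in a slab, and a $\Delta$-wing or a tilted grim reaper surface is a graph over a strip of width $2b\ge\pi$ lying in that strip's slab, but such a surface is bounded above — whereas $M-p_i$ contains points with $z$-coordinate both large positive and large negative for $p_i$ deep enough, so $M'$ is not bounded above. The only remaining possibility for each component is a vertical plane, and since $M'$ is a smooth limit in a slab with the uniform area bound, it is a finite union of parallel vertical planes.

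The main obstacle I anticipate is the middle step: rigorously deducing ``$\Sigma$ is a vertical plane or a translating graph'' from the finiteness of $\mathsf N(F_\vv|\Sigma)$ for all horizontal $\vv$ together with $\mathsf N(H|\Sigma)\le 8$, \emph{without} a boundary to organize a Morse-theoretic count. The cited inequality~\eqref{critical point-est} is stated for surfaces with boundary (the terms $|Q|$ and $|A|$ are about $F|\partial M$), so one needs either an exhaustion argument — intersecting $\Sigma$ with large balls or large slabs $\{|F_\vv|<R\}$, applying the boundary estimate there, and controlling the boundary critical-point count as $R\to\infty$ — or a direct argument: if $\Sigma$ is not a graph over its horizontal projection, exhibit for suitable $\vv$ arbitrarily many critical points of $F_\vv$, contradicting the bound $\le 2$. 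Making this ``not a graph $\Rightarrow$ many horizontal-plane tangencies'' implication precise, uniformly over all directions $\vv$, is the crux, and it is presumably exactly the content of \cite{annuloids}*{Theorem~5.4} whose proof the excerpt defers.
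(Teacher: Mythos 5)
Your proposal correctly sets up the compactness step, correctly identifies the need for the foliation bounds, and honestly flags the crux — ``deducing that $\Sigma$ is a vertical plane or a translating graph from finiteness of $\mathsf N(F_\vv|\Sigma)$'' — but does not resolve it. The paper resolves it by a trick your proposal misses: rather than carry the bound $\mathsf N(F_\vv|M)\le 2$ across to the limit (yielding only $\mathsf N(F_\vv|M')\le 2$, which is not obviously enough), one first \emph{removes} the finite set $U$ of critical points of $F_\vv|M$. Then $\mathsf N\bigl(F_\vv\,\big|\,(M\setminus U)-p_i\bigr)=0$ for every $i$, and because $U$ is a fixed finite set while $p_i$ diverges, the points $U-p_i$ escape to infinity, so $(M\setminus U)-p_i$ and $M-p_i$ have the same smooth subsequential limit $M'$. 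Lower semicontinuity then yields the much stronger conclusion $\mathsf N(F_\vv|M')=0$ for every horizontal $\vv$ — no vertical tangent planes at all on non-planar components — and Lemma~\ref{lem:graphical} immediately gives graphicality. No Morse-theoretic exhaustion or ``few tangencies $\Rightarrow$ graph'' argument is needed.

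Your argument for the special case $p_i=(0,0,z_i)$ is also not correct as written: you claim that $M-p_i$ being unbounded in $z$ forces the limit $M'$ to be unbounded above, but that does not follow — the far-away parts of $M-p_i$ can escape to infinity and leave a bounded-above limit (a $\Delta$-wing, say). The paper instead repeats the same removal trick with the set $W$ of critical points of $H|M$ (at most $8$ of them, by estimate~(ii)), concluding $\mathsf N(H|M')=0$ for every such foliation function $H$. Since Lemma~\ref{lem:N(graph)>0} exhibits, for any complete translating graph $G$, some grim-reaper-type foliation $H$ with $\mathsf N(H|G)\geq 1$, no component of $M'$ can be a graph; combined with the first part, every component must be a vertical plane. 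In short: the ingredient you were missing is that the estimates~(i) and~(ii) bound the \emph{total} number of interior critical points on all of $M$, so deleting those finitely many points before translating drives the count to zero in the limit.
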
 
\begin{proof} We may assume that $M$ is connected and not a vertical plane. By hypothesis $F_\vv$ has no more than two critical points. Let $U$ be the set of critical point of
 $F_\vv|M$ 
and define
$$M_i=(M\setminus U)-p_i.$$
 Since the $p_i$ diverge, the curvature and area estimates of Remark~\ref{rmk:area-estimate} guarantee that the $M_i$ converge smoothly to a limit translator $M'$.
 By lower semicontinuity, 
 $$\mathsf{N}(F_\vv|M')\le\liminf \mathsf{N}(F_\vv|M_i)=\liminf \mathsf{N}(F_\vv|(M'-U))=0.$$
Therefore 
$$ \mathsf{N}(F_\vv|M')=0.$$
Let $\Sigma$ be a component of $M'$ that is not a vertical plane. From the equality above we may conclude that $ \mathsf{N}(F_\vv|\Sigma)=0$  for any horizontal unit vector 
$\vv$. In other words, the tangent plane to $\Sigma$ is never vertical. It now follows from Lemma~\ref{lem:graphical} below that $\Sigma$ is a graph.
(Alternatively, Corollary 1.2 in \cite{spruck-xiao} implies that $\Sigma$ is a graph.)
Now let $W$ be the set of critical points of $H$ on $M$. Recall that the level surfaces of $H$ are all graphs of the same type over the same strip in the plane $\{z=0\}$. For a divergent sequence of points $p_i=(0,0,z_i)$  a parallel argument to the one in the previous paragraph (with $W$ instead of $U$) shows that the sequence of surfaces
 $$M_i=(M\setminus W)-p_i.$$ has a subsequence that converges smoothly to a limit translator $M'$  with 
 $$ \mathsf{N}(F_\vv|M')=0.$$
Suppose that a component $\Sigma$ of $M'$ is a graph. This contradicts Lemma~\ref{lem:N(graph)>0} below, which asserts that for a translating graph $G$, we can find a grim reaper surface or tilted grim reaper surface so that its associated minimal foliation function $H$ satisfies
$$ \mathsf{N}(H|G)\geq 1.$$
\end{proof}

\begin{lemma}\label{lem:graphical} Suppose $\Sigma$ is a connected and properly embedded surface lying in a convex domain of $W\subset\RR^3$ with the property that 
$W\setminus \Sigma$ has two connected components. If the tangent planes to $\Sigma$ are never vertical, then $\Sigma$ is a graph over a horizontal plane.
\end{lemma}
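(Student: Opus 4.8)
The plan is to show that the horizontal projection $\pi:\Sigma\to P$, where $P=\{z=0\}$, is a local diffeomorphism that is also a covering map onto its image, and then upgrade this to injectivity using the hypothesis that $W\setminus\Sigma$ is disconnected. First I would observe that since the tangent plane to $\Sigma$ is never vertical, the restriction $d\pi_p$ is an isomorphism at every $p\in\Sigma$, so $\pi|_\Sigma$ is an open local diffeomorphism and $\pi(\Sigma)$ is an open subset $U_0\subset P$. The non-vertical condition also gives a global unit normal field $\nu$ on $\Sigma$ with $\nu\cdot\ee_3>0$ (choosing the sign), so $\Sigma$ is two-sided, and the two sides correspond to ``above'' ($\nu$-side) and ``below''. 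Properness of $\Sigma$ in the convex domain $W$ ensures that vertical lines meet $\Sigma$ in a closed set, which is the key to promoting the local diffeomorphism to something with good fiber structure.

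The main step is to prove that each vertical line $\ell_q=\{q\}\times\RR$ (for $q\in U_0$) meets $\Sigma$ in exactly one point. Suppose not: then $\ell_q\cap\Sigma$ contains at least two points $p_1,p_2$ with, say, $z(p_1)<z(p_2)$, and between consecutive such intersection points the open segment lies in $W\setminus\Sigma$. Because $\nu\cdot\ee_3>0$ everywhere, at each intersection point the line $\ell_q$ crosses $\Sigma$ transversally from the ``below'' side to the ``above'' side as $z$ increases. Hence as one traverses $\ell_q$ upward, one passes alternately through components of $W\setminus\Sigma$, but the sign of the crossing is always the same — always below-to-above — which is impossible if there are two consecutive crossings, since after the first crossing one is on the ``above'' side and cannot re-enter a point where $\Sigma$ is crossed in the below-to-above sense without an intervening above-to-below crossing. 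More carefully: the function $p\mapsto\nu(p)\cdot\ee_3>0$ together with $W\setminus\Sigma$ having only \emph{two} components forces a consistent global labelling of the two sides of $\Sigma$; a vertical segment of $\ell_q$ joining $p_1$ to $p_2$ must start in one labelled component and end in the other, yet both endpoints are approached from the ``below'' side, a contradiction. Therefore $\ell_q\cap\Sigma$ is a single point, so $\pi|_\Sigma$ is injective, and combined with the local-diffeomorphism property it is a diffeomorphism onto the open set $U_0\subset P$; thus $\Sigma$ is the graph of a smooth function on $U_0$.

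The hard part will be making the ``two sides of $\Sigma$, two components of $W\setminus\Sigma$'' bookkeeping rigorous — i.e. verifying that the disconnectedness hypothesis genuinely prevents a vertical line from crossing $\Sigma$ twice with the same transversal orientation. The clean way to do this is to use that $W$ is convex (hence simply connected) and that $\Sigma$ is a properly embedded, two-sided hypersurface with $W\setminus\Sigma$ having exactly two components $W^+,W^-$: then $\Sigma$ separates $W$, and one can define a continuous ``side function'' $s:W\setminus\Sigma\to\{\pm1\}$ constant on each component. Moving along $\ell_q$ in the direction of increasing $z$, the value of $s$ must change sign at every transversal crossing of $\Sigma$; but the fixed sign $\nu\cdot\ee_3>0$ means every crossing changes $s$ in the \emph{same} direction ($-\to+$, say), so $\ell_q$ can cross $\Sigma$ at most once. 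This is the crux and deserves a careful couple of sentences; everything else — openness of $U_0$, smoothness of the graph, completeness of the argument — is routine once this separation fact is in hand.
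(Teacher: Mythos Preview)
Your argument is correct and is essentially the paper's: both fix a global unit normal on $\Sigma$ (possible since tangent planes are never vertical and $\Sigma$ is connected), use the two-component hypothesis to tie the normal consistently to one component of $W\setminus\Sigma$, and derive a contradiction from a vertical line meeting $\Sigma$ in two consecutive points. The paper orients $\nu$ to point into the component containing the open segment between those two points and then applies the intermediate value theorem to $\nu\cdot\ee_3$ along a path in $\Sigma$ joining them, whereas you orient $\nu$ so that $\nu\cdot\ee_3>0$ everywhere and read off the contradiction from the forced crossing direction --- this is the same idea in contrapositive form.
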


\begin{proof} 
By assumption, the projection
$$\Pi:\Sigma\rightarrow \RR^2=\{z=0\}$$
is locally one-to-one. If it is not globally one-to-one, there exists a vertical line $L$ that intersects $\Sigma$ in two or more points. Choose $p$ and $q$ in $L\cap \Sigma$ with the property
that the interval $l\subset L$ between $q$ and $p$ contains no other points in $L\cap \Sigma$. Relabel the points if necessary so that $q$ is above $p$, i.e. $q\cdot\ee_3>p\cdot\ee_3$.
Now, by assumption,  $W\setminus\Sigma$ has two components. We may orient $\Sigma$ so that its unit normal $\nu$ points into the component that contains the line segment $l$. Therefore, 
$$\nu(p)\cdot\ee_3>0 \mbox{\,\, and\,\,} \nu(q)\cdot\ee_3<0.$$ 
Since $\Sigma$ is connected, there is a path in $\Sigma$ between $p$ and $q$. By the intermediate-value theorem, there must be a point on this path where $\nu\cdot\ee_3=0$.  At this point, the tangent plane to $\Sigma$ is vertical, a contradiction.

\end{proof}

\begin{lemma}\label{lem:N(graph)>0} 
Suppose $G$ is a complete translating graph. Then there is a grim reaper surface for which the associated minimal foliation function $H$ in 
\eqref{general-H} satisfies

$$  \mathsf{N}(H|G)\geq 1.$$
\end{lemma}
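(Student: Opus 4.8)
The plan is to produce a grim reaper surface (possibly tilted) whose associated foliation function $H$ has a critical point on $G$, and to do this by a topological/degree argument rather than by explicit construction. First I would recall the classification of complete translating graphs (Theorem~\ref{th:graphs}): $G$ is either a tilted grim reaper surface, a $\Delta$-wing, or a bowl soliton. I would handle these cases. If $G$ is itself a grim reaper surface or tilted grim reaper surface, one can simply choose $H$ to be the minimal foliation function coming from a \emph{different} grim reaper surface in the foliating family of the upper hemisphere described in Remark~\ref{rmk:flat-implies GR}; since the two cylinders have nonparallel rulings, their Gauss images (half-circles) cross, so there must be a point where the tangent planes agree, i.e. a critical point of $H$ on $G$, giving $\mathsf{N}(H|G)\ge 1$. (One must check the contact is not along an entire curve making $G=H^{-1}(t)$; this is ruled out because distinct leaves of that foliation are distinct surfaces.)

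For the substantive cases ($\Delta$-wing or bowl soliton), the key geometric fact is that $G$ has strictly positive Gauss curvature at some point, hence its Gauss map $\nu_G$ is a local diffeomorphism near that point and its image $\nu_G(G)\subset S^2$ is an open subset of the upper hemisphere. Since the half-circles corresponding to tilted grim reaper surfaces foliate the open upper hemisphere, I can pick a tilted grim reaper surface $M$ whose Gauss image is a half-circle $\gamma$ that \emph{passes through an interior point of} $\nu_G(G)$. The contact points of $M$ (suitably translated) with $G$ are exactly the points $p\in G$ where $\nu_G(p)\in\gamma$ \emph{and} the appropriate first-order matching holds — but the right way to see that such a point exists is: the level set $H^{-1}(t)$ is a vertical translate of $M$, and as $t$ ranges over $\RR$ these sweep out a slab $U\times\RR$ containing points of $G$ both above and below the leaves (since $G$ is bounded above over its strip while the grim reaper leaves are not, or vice versa depending on strip widths — here one should choose $M$ to be a grim reaper surface over a strip \emph{narrower} than $G$'s domain so that the leaves, being defined only over a substrip, can be slid to first touch $G$ from above). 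At the first touching height, $G$ meets a leaf tangentially at an interior point, which is a critical point of $H$; and it is not identically that leaf because $G$ is not flat near a point of positive curvature. Hence $\mathsf{N}(H|G)\ge 1$.

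Concretely the cleanest route: fix any point $q\in G$ with positive Gauss curvature and with $\nu_G(q)$ not horizontal; its tangent plane at $q$ agrees with the tangent plane of some tilted grim reaper surface $M_0$ along a ruling through (a translate of) $q$. Translate $M_0$ vertically so that it is tangent to $G$ at $q$. If $M_0$ and $G$ do not coincide near $q$ — which holds since $M_0$ is flat and $G$ is not — then $q$ is by definition a critical point of $H_{M_0}$ on $G$, so $\mathsf{N}(H_{M_0}|G)\ge 1$, and we are done. This is the argument already used in the proof of Theorem~\ref{th:graphs} / Remark~\ref{rmk:flat-implies GR} to distinguish $G$ from the grim reaper family, so it is available to us.

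The main obstacle is the bookkeeping around the word ``grim reaper surface'' in the statement versus ``tilted grim reaper surface'': if the lemma really insists on an \emph{untilted} grim reaper surface, the tangent-plane-matching trick no longer works verbatim, because an untilted grim reaper surface has a horizontal ruling direction and its foliation function $H$ is defined only over a width-$\pi$ strip, so I would instead need the sweeping argument: choose the untilted grim reaper surface over a strip $\{|y|<\pi/2\}\subset$ (domain of $G$ after a rigid motion), translate it far up so it is disjoint from the compact-in-$x$ cap region of $G$ where the maximum is attained, then lower it until first contact. Because $G$ is bounded above over every compact set while the grim reaper leaf tends to $-\infty$ at $y=\pm\pi/2$, a first-contact interior point must occur, giving the critical point. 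Verifying that first contact is interior (not escaping to the boundary of the strip, where $H\to-\infty$) and not along a whole curve is the one place that needs care, and it is exactly where the strict positivity of the curvature of $G$ (available for $\Delta$-wings and bowl solitons, and handled separately for the flat case) is used.
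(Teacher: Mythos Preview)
Your proposal is correct and follows the same overall architecture as the paper: invoke the classification of complete translating graphs and treat the flat and strictly convex cases separately, producing in each case a grim reaper leaf tangent to $G$ at a point where the two surfaces do not coincide.

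For the flat cases your Gauss-image crossing argument is exactly what the paper does. For the strictly convex cases ($\Delta$-wing and bowl soliton) the paper's route is considerably simpler than either your tangent-plane matching at a generic positive-curvature point or your sweeping argument. The paper just observes that each of these surfaces has a point where the height function attains its maximum; at that point the tangent plane is horizontal, and the \emph{untilted} grim reaper $h(x,y)=\log(\cos y)$ also has horizontal tangent plane along $y=0$, so the leaf of $H$ through that maximum is tangent to $G$ there. This single observation disposes of both cases with an untilted grim reaper and renders your bookkeeping worry about ``grim reaper'' versus ``tilted grim reaper'' moot. Your tangent-plane-matching argument is valid but yields a possibly tilted grim reaper, and your fallback sweeping argument, while workable, carries exactly the boundary-escape and first-contact checks you flag as needing care; the paper sidesteps all of that by going straight to the height maximum.
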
 
\begin{proof}According to the classification in Theorem~\ref{th:graphs},   we may assume that either $G$ is a bowl soliton or, after a rotation,  $G$ is a grim reaper surface, a tilted grim reaper surface or a $\Delta$-wing defined  over a strip $\{|y|<b\}$ for some value of $b\geq \pi/2$.  Both the $\Delta$-wing  and the bowl soliton contain a  point where the height is maximized. In these two cases, let $H$ be the minimal foliation function associated with the standard grim reaper surface: $h(x,y)=\log(\cos y)$. At these maxima, $H|G$ has a critical point. Therefore 
$ \mathsf{N}(H|G)\geq 1$.

Next, suppose that $G$ is the grim reaper surface. Rotate $G$ by a nonzero angle around the $z$-axis, $Z$, to produce $\tilde G$, and let 
$H$ be the minimal foliation function associated with vertical translates of $\tilde G$. At the origin, $\tilde G$ and $G$ are tangent. Hence $ \mathsf{N}(H|G)\geq 1 $.

The last case to consider is when $G$ is a tilted grim reaper surface. Observe that the Gaussian image of $G$ is a semi-circular arc in the  hemisphere of $S^2\cap\{z<0\}$ with endpoints at  $(0, \pm1,0)$ on the equator. This arc does not pass through the point $(0,0-1)$.  Now rotate the {\em untilted} grim reaper surface by a nonzero angle $\theta$ about $Z$. Its Gaussian image is a semi-circular arc that passes through  $(0,0-1)$ and has endpoints at $\pm(\cos\theta, \sin\theta,0)$ on the equator, for some $\theta\neq 0$. These two semi-circular arcs cross. Therefore, there is a  translate of the this rotated grim reaper surface that is tangent to $G$ at some point. Hence, if $H$ is the minimal foliation function associated with the translated and  rotated grim reaper, we must have, again, $ \mathsf{N}(H|G)\geq 1 $.
\end{proof}
 \section{The proofs of the theorems in Section~\ref{sec:intro}}\label{sec:proofs}
 \section*{A gap theorem for translators}\label{sub:gap}

In the remaining part of this paper we will prove the three theorems in Section~\ref{sec:intro}. In one way or another they depend upon  
 Proposition \ref{prop:gap}.

\begin{proposition} [\cite{annuloids}*{Theorem~18.1}]
\label{prop:gap}
Suppose that $I$ is an infinite open strip in $\RR^2=\{z=0\}$ of width $\pi$.   Suppose that $M$ is a properly immersed translator in $\RR^3\cap\{z\ge 0\}$ with no boundary in the slab $S:=I\times\RR$.  Then $M$ lies in the complement of $S$.
\end{proposition}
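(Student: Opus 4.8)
The plan is to set up a comparison with the grim reaper surface foliation and use the no-boundary hypothesis to run a maximum-principle sweep. First I would reduce to the model situation: after a horizontal rotation and translation, assume $I=\{|y|<\pi/2\}$, so $S=\{|y|<\pi/2\}\times\RR$, and suppose for contradiction that $M$ meets the open slab $S$. Since $M$ is a properly immersed translator in $\{z\ge 0\}$ with no boundary in $S$, the portion $M\cap S$ is a properly immersed translator-with-boundary whose boundary lies in $\{|y|=\pi/2\}$ and in $\{z=0\}$ (the latter only where $M$ itself has boundary, which by hypothesis is outside $S$, so in fact $\partial(M\cap S)\subset\{|y|=\pi/2\}$, together with possibly the ``floor'' $\{z=0\}$ only away from $S$). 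The key point is that $S$ is foliated by vertical translates of the grim reaper surface $\Gamma_c:=\{z=\log\cos y + c\}$, $c\in\RR$, and each $\Gamma_c$ is a translator; moreover $\bigcup_c \Gamma_c$ sweeps out all of $S$ with $\Gamma_c\to\{|y|=\pi/2\}\times\RR$ (going off to $z=-\infty$) as $c\to -\infty$ and $\Gamma_c\to +\infty$ as $c\to +\infty$.

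The main step is the sweep argument. For $c$ very negative, $\Gamma_c$ lies below $M\cap S$ in the region $\{z\ge 0\}$ (because $\Gamma_c$ tends to $z=-\infty$ on compact subsets while $M\subset\{z\ge 0\}$). For $c$ very positive, $\Gamma_c$ lies entirely in $\{z$ large$\}$; since $M$ is properly immersed, if $M\cap S$ were bounded above we would be done immediately, so the real content is when $M\cap S$ is unbounded above. Here I would need a barrier controlling the growth of $M$ near the walls $\{|y|=\pi/2\}$: this is exactly where properness and the boundary-freeness in $S$ enter — one argues that $M\cap S$ cannot ``escape to $+\infty$'' while staying inside the slab, because any such end would have to be asymptotically vertical near $\{|y|=\pi/2\}$, and comparing with a grim reaper surface of large tilt (equivalently, a wider grim reaper truncated to the slab, or the tilted grim reaper surfaces of \eqref{TGR2}) forces a contradiction with the width being exactly $\pi$, using Remark~\ref{remark: no-thin-graphs} in spirit. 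Alternatively, and more cleanly, I would invoke the curvature/area estimates available for translators (as in Remark~\ref{rmk:area-estimate}) to extract a tangential limit at infinity that is a complete boundaryless translator inside the closed slab of width $\pi$, which by Proposition~\ref{strictly convex graph}/Remark~\ref{remark: no-thin-graphs} (no complete translator lives in a strip of width $\le\pi$ except the boundary planes) must be a union of the two bounding planes — contradicting that the limit passes through an interior point.

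Concretely, define $c^*=\sup\{c: \Gamma_c\cap\{z\ge 0\}\text{ lies weakly below }M\cap S\}$. By the discussion above $c^*$ is finite, and at $c=c^*$ the surfaces $\Gamma_{c^*}$ and $M$ have an interior point of contact with $M$ locally on one side of $\Gamma_{c^*}$ — here one must check the contact is genuinely interior, which is where the absence of boundary of $M$ in $S$ is used: a first touching on $\{|y|=\pi/2\}$ is excluded because $\Gamma_c$ recedes from the wall as $c$ increases, and a first touching could not happen ``at infinity'' by the compactness/barrier argument of the previous paragraph. Then the strong maximum principle for the translator equation \eqref{general-translator-equation} (equivalently, the minimal surface equation in the Ilmanen metric $g_{ij}=e^{-z}\delta_{ij}$) forces $M$ to coincide with $\Gamma_{c^*}$ near the contact point, hence on the whole connected component of $M\cap S$; but $\Gamma_{c^*}$ has boundary $\{|y|=\pi/2\}\times\RR$ inside $\overline S$ and is \emph{not} weakly below every $\Gamma_c$ with $c>c^*$, so this component of $M$ would have to equal $\Gamma_{c^*}$ and thus, being a leaf, cannot extend outside $S$ as a translator without boundary there — and since $M$ has no boundary in $S$ at all, we contradict properness of $M$ in $\RR^3$ (the component closes up or limits onto the wall). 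Therefore $M\cap S=\emptyset$, i.e.\ $M$ lies in the complement of $S$.

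The hard part will be making the ``no first contact at infinity'' step rigorous: one must rule out the scenario where $M\cap S$ stays strictly above every $\Gamma_c$ yet still fits in the slab, which requires the uniform curvature and area estimates to produce a legitimate limit translator and then the non-existence of complete translators in strips of width $\le\pi$ (Remark~\ref{remark: no-thin-graphs}) to kill it. Everything else — the setup, the monotonicity of the foliation, and the final application of the strong maximum principle — is routine.
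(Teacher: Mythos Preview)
Your sweep with the grim reaper foliation has a genuine gap at the ``no first contact at infinity'' step, and neither fix you propose works in this generality.

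First, the curvature and area estimates of Remark~\ref{rmk:area-estimate} apply to the compact annuli in $\Cc$ and their smooth limits, not to an arbitrary properly immersed translator $M$; you cannot invoke them to extract a smooth subsequential limit of $M-p_i$. Second, even granting such a limit $M'$, the contradiction you cite is false: the grim reaper surface is itself a complete translator contained in the closed slab of width exactly $\pi$, so Remark~\ref{remark: no-thin-graphs} (which concerns strips of width strictly less than $\pi$) does not exclude it. If the infimum of $z-\log\cos y$ on $M\cap S$ is approached along a sequence diverging in $x$, the expected limit is $\Gamma_{c^*}$ itself, and no contradiction arises. Your alternative suggestion of tilted grim reaper barriers does not help either, since those live over strips strictly wider than $\pi$ and, once truncated to $S$, give no useful one-sided control.

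The paper sidesteps the non-compactness issue entirely by using compact barriers and taking the limit in the barrier rather than in $M$. For each $\beta\in(0,\pi/2)$ and $L>0$ it takes the unique translating graph over the rectangle $R_{L,\beta}=[-L,L]\times[-\beta,\beta]$ with zero boundary values; because this barrier is compact, the maximum principle applies directly, showing $M\cap(R_{L,\beta}\times\RR)$ lies above it. Letting $L\to\infty$ (the graphs are monotone in $L$ and bounded above by the shifted grim reaper $G_\beta$) produces a bounded translating graph $V_\beta$ over the substrip $I_\beta$ with zero boundary values, which a cited uniqueness result (Proposition~\ref{prop:cylindrical-graph}) identifies as $G_\beta$. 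Thus $M$ lies above every $G_\beta$, and letting $\beta\uparrow\pi/2$ forces $M$ out of $S$. No compactness or curvature estimates on $M$ are needed.
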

 \noindent For the reader's convenience, we provide a proof here.
\begin{proof}

 We may assume that the strip $I$ is $\RR\times(-\pi/2,\pi/2)$.  Fix $\beta\in(0,\pi/2)$. Define the substrip $I_\beta:=\RR\times(-\beta,\beta)\subset I$  and, for $L>0$, the rectangle $R_{L,\beta}:=[-L,L]\times[-\beta,\beta]$. As discussed in Section~\ref{sec:DeltaWings}, there is a unique graphical translator over $R_{L,\beta}$ with zero boundary values. By straightforward applications of the maximum principle, 
\begin{itemize}
\item[(i)] $M\cap (R_{L,\beta}\times\RR)$ lies above the graphical translator over $R_{L,\beta}$.
\item[(ii)] If $L'>L$, the graphical translator over $R_{L',\beta}$ lies above the graphical translator over 
$R_{L,\beta}$.
\end{itemize}
Vertically translate by 
$-\log\cos(\beta)$ the standard grim reaper surface $G$, the graph of $\log(\cos(y))$ over $I$.  Restrict attention to $\{z\geq 0\}$ to produce a graph $G_\beta$ over $I_\beta$.  Note that $G_\beta$  is graph with zero boundary values on $I_\beta$. Using the maximum principle again, 
\begin{itemize}
\item[(iii)]  For any $L>0$, the graph $G_\beta$ lies above the graphical translator  over $R_{L,\beta}$.
\end{itemize}
From (ii) and (iii) above, we may conclude that the graphs over $R_{L,\beta}$ converge smoothly, as $L\to\infty$, to a graphical translator $V_\beta\subset \{z\geq 0\}$ over $I_\beta$. 
This graph is zero on $\partial I_\beta$. Moreover, from (i),
\begin{equation}
\label{Mabove}
M\cap (I_\beta\times\RR)\,\,\mbox{lies above}\,\, V_\beta.
\end{equation}
This is true for every $\beta\in(0,\pi/2)$. We claim that $V_\beta=G_\beta$. Assuming the claim we can conclude the proof of the proposition  from \eqref{Mabove} by
 observing that  a point  $p=(x,y,z)\in S=I\times \RR$ is {\em below} $G_\beta$\ provided $|y|<\beta$ and 
$\beta$ is  sufficiently close to $\pi/2$.

To prove the claim  we will use the following proposition.

\begin{proposition} [\cite{graphs}*{Proposition~3.2}] \label{prop:cylindrical-graph}
Let $\Omega$ be a bounded, convex domain in $\RR^p$.   Suppose there
is bounded translating graph W over $ \RR^q \times\Omega$ 
that vanishes on the boundary.   Then $W$ is unique, and therefore is invariant under translation in the first $q$ coordinates
  translation in the 
depends only on the second  $p$ coordinates.
\end{proposition}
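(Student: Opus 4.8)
The plan is to prove uniqueness first and then read off the translation invariance, which is the formal and easy half. Indeed, if $W$ is the graph of a bounded function $u\colon\RR^q\times\Omega\to\RR$ with zero boundary values, then for each $a\in\RR^q$ the horizontally translated function $u_a(x',x'')=u(x'+a,x'')$ is again a bounded translating graph over $\RR^q\times\Omega$ with zero boundary values and the same sup-norm, because horizontal translations preserve equation~\eqref{general-translator-equation} together with all the hypotheses. Uniqueness then forces $u_a=u$ for every $a$, so $u$ does not depend on the first $q$ coordinates. Thus everything reduces to the assertion that two bounded translating graphs over $\RR^q\times\Omega$ with zero boundary values must coincide.

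For the uniqueness I would sweep with vertical translates. Let $u_1,u_2$ be two such solutions, with $|u_i|\le C$. Since a vertical translate of a translating graph is again one, the relevant quantity is $m:=\sup_{\RR^q\times\Omega}(u_2-u_1)$, which is finite; as $u_1$ and $u_2$ agree on the boundary, at least one of $\sup(u_2-u_1)$, $\sup(u_1-u_2)$ is $\ge 0$, so after relabelling assume $m\ge 0$, whence $u_1+m\ge u_2$. If this supremum is attained at an interior point $p_0$, then $v:=(u_1+m)-u_2\ge 0$ vanishes at $p_0$, and, since $u_1+m$ and $u_2$ satisfy the same quasilinear elliptic translator equation, $v$ solves a linear homogeneous elliptic equation with no zeroth-order term whose coefficients are locally uniformly elliptic and locally bounded on the open cylinder (this uses the standard interior gradient estimates for translating graphs). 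The strong maximum principle then gives $v\equiv 0$ on the connected set $\RR^q\times\Omega$, hence $u_1+m\equiv u_2$; evaluating on the lateral boundary, where both vanish, forces $m=0$ and $u_1=u_2$. Running the same argument with $u_1$ and $u_2$ exchanged then yields $u_1\le u_2$ as well, hence equality.

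The remaining possibility --- that $m$ is only approached, never attained, at interior points --- is the heart of the matter, and this is where the two boundedness hypotheses are used. If $m$ is approached along $p_j=(x_j',x_j'')$, then either $|x_j'|\to\infty$ or $\dist(x_j'',\partial\Omega)\to 0$. In the first case I would translate horizontally, setting $u_i^{(j)}(x',x''):=u_i(x'+x_j',x'')$; these are again bounded translating graphs over $\RR^q\times\Omega$ with zero boundary values, so by the interior estimates a subsequence converges in $C^\infty_{\mathrm{loc}}$ to bounded translators $u_1^\infty,u_2^\infty$ over the same cylinder, with $\sup(u_2^\infty-u_1^\infty)\ge m$ attained at a limit of the points $(0,x_j'')\in\{0\}\times\overline\Omega$; if that limit lies in $\Omega$ we are back in the interior case, while if it lies on $\partial\Omega$ we are reduced to the second case. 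The second case requires a modulus of continuity at the lateral boundary that is \emph{uniform in the horizontal variable}: since $\Omega$ is bounded and convex, near any boundary point it lies inside a half-space, and comparison of $u_i$ there with suitable supersolutions built from translating graphs over slightly larger bounded convex domains (whose existence and basic properties are recalled in Section~\ref{sec:DeltaWings}) forces $u_i(x',x'')\to 0$ as $x''\to\partial\Omega$, uniformly in $x'$. That makes $(u_2-u_1)(p_j)\to 0$, contradicting $m>0$; so $m=0$, and by symmetry $u_1\equiv u_2$.

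I expect the genuine obstacle to be precisely this last barrier construction when $\partial\Omega$ is only assumed convex and so may have corners: the interior compactness and the strong maximum principle are routine, but producing a single modulus of continuity near a corner of $\Omega$ that is valid uniformly along the unbounded directions takes care. (When $q=1$ and $\Omega$ is an interval --- the case actually invoked above, for the identity $V_\beta=G_\beta$ --- the boundary is two points and the barriers are explicit; one could even deduce the conclusion from Theorem~\ref{th:bounded-by-lines}, but that route is unavailable for general $\Omega$ and would in any case be circular here, since Theorem~\ref{th:bounded-by-lines} rests on Proposition~\ref{prop:gap}.)
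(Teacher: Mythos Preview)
This proposition is not proved in the present paper; it is quoted from \cite{graphs} and used as a black box inside the proof of Proposition~\ref{prop:gap}. So there is no in-paper argument to compare against directly.

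Your architecture is sound: uniqueness trivially yields translation invariance, and the sliding/strong-maximum-principle route for uniqueness is the right one (the linearized operator has no zeroth-order term because the translator PDE involves only $Du$ and $D^2u$). You have also located the real difficulty correctly---uniform continuity at $\RR^q\times\partial\Omega$, uniform in the unbounded $x'$-directions. But your proposed fix does not close the loop. The barriers you suggest, namely $D(\Omega')$ for $\Omega'$ slightly larger than $\Omega$, do not work as stated: if $\Omega'$ is close to $\Omega$ then $D(\Omega')$ is small near $\partial\Omega$ but need not dominate $u_i$ in the interior, while if $\Omega'$ is large enough that $D(\Omega')\ge C$ on all of $\Omega$ then it is no longer small near $\partial\Omega$. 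More fundamentally, even with a correct one-variable barrier $\tilde w$, comparing $u_i$ to $\tilde w$ on the unbounded strip $\RR^q\times(\Omega\cap\{0<y_1<\delta\})$ is itself a non-compact maximum-principle problem of exactly the same type as the one you started with, so the reduction is illusory. Some genuine Phragm\'en--Lindel\"of type input (or an argument specific to translators) is needed, and that is what the cited reference supplies.

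For the single instance actually invoked in this paper---showing $V_\beta=G_\beta$ in the proof of Proposition~\ref{prop:gap}---there is, however, no gap in your scheme. The construction already gives the sandwich $0\le V_\beta\le G_\beta$, and since $G_\beta\to 0$ at $y=\pm\beta$, the uniform boundary continuity of $V_\beta$ comes for free. Your translate-and-pass-to-the-limit argument (comparing $V_\beta$ with $V_\beta(\cdot+a,\cdot)$) then goes through cleanly, and your remark about the circularity of appealing to Theorem~\ref{th:bounded-by-lines} is correct.
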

With $p=q=1$ and $\Omega=[-\beta,\beta]$, we  have that  $V_\beta \subset I_\beta\times \RR$  is a horizontal cylinder defined over a curve in $\{x=0, |y|<\beta\}$. 
  In particular,  it has Gauss curvature identically equal to $0$.
As argued in Remark~\ref{rmk:flat-implies GR}, this implies that $V_\beta$ is a portion of a tilted grim reaper surface. Since it contains horizontal lines,
  it follows that  $V_\beta$  is a subset of a translate of the untilted grim reaper surface $G$. Hence $V_\beta =G_\beta$  as claimed.

\end{proof}

\begin{corollary}[\cite{annuloids}*{Corollary~18.3}]\label{convex-corollary}
If $M\subset \{z\ge 0\}$ is a translator, then $M$ lies in $C\times [0,\infty)$, where
$C$ is the convex hull of the projection of $\partial M$ to the horizontal plane.
\end{corollary}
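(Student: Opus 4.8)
The plan is to deduce Corollary~\ref{convex-corollary} from Proposition~\ref{prop:gap} by a supporting-halfplane argument. Let $C$ be the convex hull in $\RR^2=\{z=0\}$ of the projection $\pi(\partial M)$, where $\pi(x,y,z)=(x,y)$. Since $C$ is closed and convex, it is the intersection of all closed halfplanes in $\{z=0\}$ that contain it. So it suffices to show that for every closed halfplane $P\subset\{z=0\}$ with $\pi(\partial M)\subset P$, one has $\pi(M)\subset P$; intersecting over all such $P$ then gives $\pi(M)\subset C$, i.e. $M\subset C\times[0,\infty)$.

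So fix such a halfplane $P=\{(x,y):\ \vv\cdot(x,y)\le c\}$ for a horizontal unit vector $\vv$ and a constant $c$. The first step is to reduce to the width-$\pi$ situation of Proposition~\ref{prop:gap}. Suppose, for contradiction, that some point of $M$ projects outside $P$, i.e. there is $p\in M$ with $\vv\cdot p>c$. Then for some $\epsilon>0$ the open slab $S_\epsilon:=\{(x,y,z):\ c<\vv\cdot(x,y)<c+\epsilon\}$... but that slab has width $\epsilon$, not $\pi$; the right move is instead to take the slab $S:=\{c< \vv\cdot(x,y) < c+\pi\}$ of width exactly $\pi$ sitting just beyond $P$. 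Since $\pi(\partial M)\subset P=\{\vv\cdot(x,y)\le c\}$, the translator $M$ has no boundary in $S$. By Proposition~\ref{prop:gap} (with $I$ the width-$\pi$ strip $\{c<\vv\cdot(x,y)<c+\pi\}$ in $\{z=0\}$), $M$ lies in the complement of $S$, hence $M\cap S=\emptyset$.

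The remaining step is a slab-sliding/continuity argument to push past $S$ to all of $\{\vv\cdot(x,y)>c\}$. For each $s\ge c$ consider the width-$\pi$ slab $S_s:=\{s<\vv\cdot(x,y)<s+\pi\}$. We have just shown $M\cap S_c=\emptyset$, so in particular $M$ has no boundary in $S_s$ for any $s\ge c$ (its boundary already lies in $\{\vv\cdot(x,y)\le c\}$ and $M$ avoids the region $c<\vv\cdot(x,y)<c+\pi$). Apply Proposition~\ref{prop:gap} again to each $S_s$: since $M$ has no boundary in $S_s$, $M$ is disjoint from $S_s$. Letting $s$ range over $[c,\infty)$, the union $\bigcup_{s\ge c}S_s$ is all of $\{\vv\cdot(x,y)>c\}$, so $M$ is disjoint from $\{\vv\cdot(x,y)>c\}$, i.e. $\pi(M)\subset P$. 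This contradicts the existence of $p\in M$ with $\vv\cdot p>c$, completing the argument.

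I expect the only real subtlety to be the clean bookkeeping of which slabs $M$ has no boundary in: one must note that $\partial M$ projects into $\{\vv\cdot(x,y)\le c\}$, so $\partial M$ misses \emph{every} slab $S_s$ with $s\ge c$ automatically, and hence Proposition~\ref{prop:gap} applies to each of them without needing the ``$M\cap S_c=\emptyset$'' conclusion as an extra input — in fact one application of Proposition~\ref{prop:gap} to $S_s$ for each $s\ge c$ already yields $M\cap S_s=\emptyset$ directly, and the union of these slabs is the open halfplane complementary to $P$. The passage to the intersection over all supporting halfplanes is then routine convex geometry. Thus the main (and essentially only) content is the correct invocation of Proposition~\ref{prop:gap}; no curvature or area estimates are needed here.
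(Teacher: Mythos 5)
Your argument is correct and is exactly the intended deduction: intersecting over supporting halfplanes reduces the claim to the halfplane case, which follows by applying Proposition~\ref{prop:gap} to the translated family of width-$\pi$ slabs $S_s$ ($s\ge c$), each of which automatically contains no boundary of $M$ because $\partial M$ projects into $\{\vv\cdot(x,y)\le c\}$. The paper states the corollary as an immediate consequence of Proposition~\ref{prop:gap} without spelling out the details, and your cleaned-up final paragraph (which correctly notes that no contradiction setup is needed and one simply sweeps the slab) matches that.
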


 The following lemma follows easily from Proposition~\ref{prop:gap}  and the maximum principle.

\begin{lemma}[\cite{annuloids}*{Lemma 18.4}] \label{boundedness-lemma}
Let $M \subset \RR^3 \cap \{z\ge 0\}$ be a translator such that $\partial M$  lies in a compact set $K$.  Then $M$ is bounded above. In particular, if $K$ lies below a bowl soliton $Q$, then $M$ also lies below $Q$.
\end{lemma}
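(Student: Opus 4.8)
The plan is to deduce Lemma~\ref{boundedness-lemma} from Corollary~\ref{convex-corollary} together with a standard barrier argument using bowl solitons. First I would invoke Corollary~\ref{convex-corollary}: since $\partial M\subset K$ is compact, its horizontal projection lies in a bounded convex set $C$, and the corollary gives $M\subset C\times[0,\infty)$. Thus $M$ is confined to a fixed vertical cylinder of bounded cross-section. The remaining task is the vertical bound: show $M$ cannot escape to $z=+\infty$.

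For the vertical bound, recall that the bowl soliton $Q$ is an entire convex graph that opens upward, so for every $R>0$ there is a vertical translate $Q_R$ of $Q$ whose boundary, when restricted to any bounded region, lies at height greater than $R$; more usefully, the family of vertical translates $\{Q+t\,\ee_3\}_{t\in\RR}$ foliates a region, and each $Q+t\,\ee_3$ lies in $\{z\ge c(t)\}$ for a slice over the bounded set $C$. The key step: choose $t$ large enough (depending only on $K$, $C$, and the fixed bowl $Q$) that $Q+t\,\ee_3$ lies strictly above $K$ over the set $C$. Then I claim $M$ lies below $Q+t\,\ee_3$. If not, push the translate down (decrease $t$) until it first touches $M$. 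By the choice of $t$ and the fact that $\partial M\subset K$ lies below $Q+t\,\ee_3$, the first touching point is an interior point of $M$, and there $M$ lies locally on one side of the translated bowl soliton — both being translators, the strong maximum principle for the translator equation forces $M$ to coincide with a piece of $Q+t\,\ee_3$. But $Q+t\,\ee_3$ is a complete entire graph with no boundary, while $M$ has boundary in $K$ strictly below it and is contained in the bounded cylinder $C\times[0,\infty)$; a connectedness/unique-continuation argument (or simply noting $Q$ is not contained in any vertical cylinder of bounded cross-section) yields a contradiction. Hence $M$ lies below $Q+t\,\ee_3$, so $M$ is bounded above. The final sentence of the lemma is then immediate: if $K$ itself lies below a given bowl $Q$, one takes $t=0$ in the argument and concludes $M$ lies below $Q$.

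I expect the main obstacle to be the careful deployment of the maximum principle at the first-touching point, namely checking that the touching is genuinely interior (not on $\partial M$, which is handled by the height choice, and not "at infinity", which is handled by the cylinder confinement from Corollary~\ref{convex-corollary}) and that tangential interior contact of two translators forces local coincidence. This is the usual Hopf/strong-maximum-principle step for the quasilinear translator equation $\overrightarrow H=-\ee_3^\perp$, written as a graph equation near the contact point since the bowl soliton is locally a graph there; once that is in place, ruling out global coincidence is easy because $M$ is trapped in a bounded cylinder whereas no vertical translate of the bowl soliton is. Everything else — the reduction to a vertical bound, and the monotone family of translated bowls — is routine given Corollary~\ref{convex-corollary}.
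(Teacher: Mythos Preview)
Your barrier argument has a genuine circularity. After invoking Corollary~\ref{convex-corollary} to confine $M$ to the cylinder $C\times[0,\infty)$, you run a first-touching argument with the vertical translates $Q+t\ee_3$. But such an argument requires a starting value of $t$ for which $M$ already lies entirely below $Q+t\ee_3$; since the bowl restricted to the compact set $C$ occupies only a bounded range of heights, that starting value exists exactly when $\sup_M z<\infty$, which is the assertion to be proved. Your claim that touching ``at infinity'' is excluded by the cylinder confinement is precisely where the argument breaks: $C\times[0,\infty)$ is unbounded in $z$, so nothing you have written rules out $\sup_{p\in M}\bigl(z(p)-w(\pi(p))\bigr)=+\infty$, where $w$ is the bowl function and $\pi$ the horizontal projection. (The instruction to ``push the translate down (decrease $t$)'' also points the slide in the wrong direction --- if $M$ already protrudes above $Q+t\ee_3$, lowering the bowl only makes things worse --- but that is a minor slip compared with the structural gap.)

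The paper itself supplies no proof beyond the remark that the lemma follows from Proposition~\ref{prop:gap} together with the maximum principle, deferring to \cite{annuloids}*{Lemma~18.4}. What is missing from your outline is an independent device for producing a disjoint starting configuration. Note that with the paper's sign convention the bowl soliton opens \emph{downward} (it is bounded above and tends to $-\infty$ at infinity; cf.\ the proofs of Theorem~\ref{th:DWing} and Proposition~\ref{prop: quarter-slab}). This suggests sliding the bowl \emph{horizontally}: for a large horizontal displacement the bowl over $C$ sits at very negative heights and is therefore disjoint from $M\subset C\times[0,\infty)$ with no appeal to boundedness of $M$. One then slides back toward the original position and must, in addition to the interior maximum-principle step you describe, rule out first contact at a point of $\partial M$; this is where the hypothesis that $K$ lies below $Q$ is used in earnest. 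Once a genuine disjoint starting position is in hand, the remainder of your maximum-principle reasoning goes through and yields both conclusions of the lemma simultaneously.
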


  \begin{figure}[htbp]
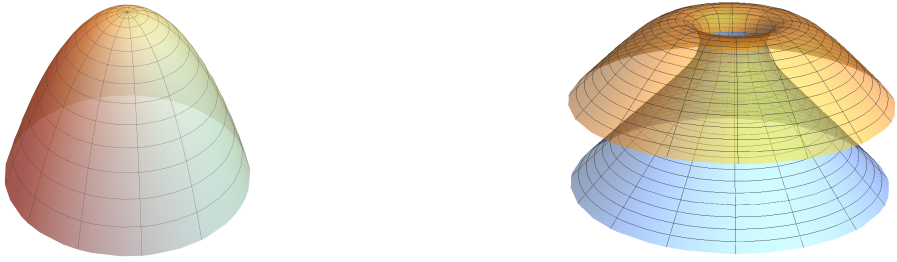

  
\begin{center}
\includegraphics[height=3.5cm]{bowl-spruck.png} \hfill \includegraphics[height=3.5cm]{catenoids.png}
\caption{\small (Left) A bowl soliton. (Right) A translating catenoid of rotation in the family $W(\lambda)$, defined in Section~\ref{sec:annuloids}.  }
\label{fig:catenoid}
\end{center}
\end{figure}

\section*{Proof of Theorem~\ref{th:gap3} of Section~\ref{sec:intro} } 
 We recall the statement of Theorem~\ref{th:gap3}.

\begin{*theorem} \label{th:gap}
 Let $U_n\subset U_n'$ be nested, open, convex regions in $\RR^2$ such that $U_n$ converges to a bounded, open,
convex set $U$ and such that $U_n'$ converges to an infinite strip $U'$.
Suppose that
\[
    \min\{ |p-q|: p\in \partial U, q\in \partial U'\} \ge \pi.
\]
Then for all sufficiently large $n$, there is no connected translator in $\{z\ge 0\}$ whose
boundary is $S_n:=((\partial U_n)\cup (\partial U_n'))\times\{0\}$.
\end{*theorem}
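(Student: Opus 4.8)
The plan is to argue by contradiction, using Proposition~\ref{prop:gap} together with the geometry of the limiting configuration. Suppose that for infinitely many $n$ there is a connected translator $M_n\subset\{z\ge 0\}$ with $\partial M_n = S_n$. Since $U_n'$ converges to the infinite strip $U'$, its width approaches the width $w(U')$ of $U'$; the hypothesis $\min\{|p-q|: p\in\partial U, q\in\partial U'\}\ge\pi$ forces $w(U')$ to be strictly larger than the width of $U$ plus a definite amount, and moreover there is a substrip $I$ of $U'$ of width exactly $\pi$ that is disjoint from $\overline U$. (Concretely: $\partial U'$ consists of two parallel lines; pick the one, say $\ell$, realizing $\dist(\partial U,\ell)\ge\pi$ on the side where $U$ does not extend, and let $I$ be the open strip of width $\pi$ adjacent to $\ell$ lying inside $U'$ on the $U$-side of $\ell$. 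Then $\overline U\cap I=\varnothing$ by the distance hypothesis, since every point of $I$ is within distance $<\pi$ of $\ell$ but every point of $U$ is at distance $\ge\pi$ from $\ell$... wait, one must be a bit careful about which boundary line; I will take $I$ to be the substrip of $U'$ of width $\pi$ whose closure touches the component of $\partial U'$ nearest to $U$, so that $I$ separates $U$ from the far boundary line and $\overline U\cap I=\varnothing$.)

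Now fix a slightly smaller substrip $I_0\subset I$ of width $\pi$ with $\overline{I_0}\subset$ (the open $\pi$-neighborhood structure) so that, for all large $n$, the strip $I_0$ is still contained in $U_n'$ and is disjoint from $\overline{U_n}$ — this is possible by the convergence $U_n\to U$, $U_n'\to U'$ and the fact that $\overline U\cap I=\varnothing$ is an open condition after shrinking $I$ by an $\eps$. For such $n$, the translator $M_n$ has boundary $S_n = (\partial U_n\cup\partial U_n')\times\{0\}$; since $\partial U_n\subset \overline{U_n}$ is disjoint from the slab $I_0\times\RR$ and $\partial U_n'$ is disjoint from $I_0\times\RR$ as well (because $I_0$ is a proper substrip of the region bounded by $\partial U_n'$), we conclude that $M_n$ has no boundary in the slab $S_0 := I_0\times\RR$. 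Proposition~\ref{prop:gap} (with the $\pi$-strip $I_0$) then forces $M_n$ to lie entirely in the complement of $S_0$.

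Finally, I derive a contradiction with connectedness. The complement of the closed slab $\overline{S_0}$ in $\RR^3$ has two components, a ``near'' side $W^-$ (containing $U$, hence containing $\partial U_n$ for large $n$) and a ``far'' side $W^+$ (containing the far boundary line of $\partial U_n'$, for large $n$, since that line lies strictly beyond $I_0$). Thus $\partial M_n$ has points in both $W^-$ and $W^+$, while $M_n\subset W^-\cup W^+$ and $M_n$ is connected — impossible, since $W^-$ and $W^+$ are separated open sets. The one point needing care is to be sure that \emph{both} boundary lines of $\partial U_n'$ (not just one) end up on a consistent side: the inner boundary curve $\partial U_n$ and the near line of $\partial U_n'$ both lie in $W^-$, whereas the far line of $\partial U_n'$ lies in $W^+$; so $\partial M_n$ meets both components regardless. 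This completes the argument.

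\textbf{Main obstacle.} The only genuinely delicate step is the bookkeeping in the first paragraph: extracting from the hypothesis $\min\{|p-q|:p\in\partial U,\,q\in\partial U'\}\ge\pi$ a width-$\pi$ substrip $I$ of $U'$ that is disjoint from $\overline U$ and, crucially, verifying that this disjointness persists (after an $\eps$-shrink) for the approximating regions $U_n\subset U_n'$ for all large $n$. One must make sure the chosen substrip is adjacent to the correct component of $\partial U'$ — namely, positioned so that $U$ lies on one side of it and the opposite boundary line of $U'$ lies on the other — so that the separation argument in the last paragraph actually applies. Everything else is a direct invocation of Proposition~\ref{prop:gap} and elementary point-set topology of slabs in $\RR^3$.
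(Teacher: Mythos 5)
Your proposal applies Proposition~\ref{prop:gap} directly to each $M_n$, but this step fails. You assert that ``$\partial U_n'$ is disjoint from $I_0\times\RR$ as well (because $I_0$ is a proper substrip of the region bounded by $\partial U_n'$)'' --- this is not true for finite $n$. Each $U_n'$ is a convex region whose boundary $\partial U_n'$ converges to the two lines of $\partial U'$ only locally; in the setting of this theorem (and in the paper's application) $\partial U_n'$ is a single connected convex curve, typically a closed curve around a bounded region. An infinite strip $I_0$ cannot be contained in the region bounded by such a curve, so $\partial U_n'$ must cross the slab $I_0\times\RR$. Hence $\partial M_n$ is \emph{not} disjoint from the slab, and Proposition~\ref{prop:gap} does not apply to $M_n$. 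The final step is likewise broken: you speak of the ``near line'' and ``far line'' of $\partial U_n'$, but for finite $n$ there are no such disjoint lines --- $\partial U_n'$ is one connected curve --- so you cannot conclude that $\partial M_n$ meets both components of the complement of the slab while $M_n$ avoids the slab.

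The paper's proof repairs exactly these points. It first passes to a subsequential limit $M$ of the $M_n$; the limit has boundary $(\partial U \cup \partial U')\times\{0\}$, which really \emph{is} disjoint from the $\pi$-strips (the infinite lines of $\partial U'$ are parallel to the strips), so Proposition~\ref{prop:gap} applies to $M$, not to $M_n$. But $M$ need not be connected, so one cannot immediately derive a contradiction from the slab's complement being disconnected. The paper therefore uses \emph{two} disjoint width-$\pi$ substrips $I_1, I_2$ of $U'\setminus U$, one on each side of $U$, so that $M$ splits into three components; the middle component $M^*$, with boundary $(\partial U)\times\{0\}$, is compact by Lemma~\ref{boundedness-lemma}. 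One then takes a compact $K$ containing $M^*$ in its interior and disjoint from $M\setminus M^*$; since each $M_n$ is connected with boundary points both inside and outside $K$ for large $n$, $M_n$ meets $\partial K$, so $M\cap\partial K\neq\varnothing$ --- a contradiction. Your single-slab scheme has no such compact separator: the piece on the near side of the slab contains an unbounded boundary line and is not compact. The missing ingredients are (a) working with the limit $M$ rather than the $M_n$'s, (b) using two strips to isolate a compact middle piece, and (c) the compact separating surface $\partial K$ to transfer the disconnectedness back to the connected $M_n$'s.
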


 \noindent This is Theorem 18.5 in \cite{annuloids}. For the reader's convenience, we provide the proof from that paper.
\begin{proof}
Suppose the result is false.  Then (after passing to a subsequence) each $S_n$ bounds a connected translator $M_n$
in $\{z\ge 0\}$.  Passing to a further subsequence, the $M_n$ converge as sets to a limit set $M$.
Note that $U'\setminus U$ contains two parallel infinite strips $I_1$ and $I_2$ each of width $\pi$.  Thus
by Proposition~\ref{prop:gap}, 
 $M$ is disjoint from $I_1\times\RR$ and $I_2\times \RR$.
Hence, $M$ is the union of three connected components, where one component, $M^*$, has boundary $(\partial U)\times\{0\}$,
and where each of the other two components is bounded by one of the straight lines in 
  $(\partial U')\times \{0\}$.

By Lemma \ref{boundedness-lemma}, $M^*$ is compact.
Let $K$ be a compact set such that $M^*$ is in the interior
of $K$ and such that $M\setminus M^*$ is disjoint from $K$.  For all sufficiently large $n$,
$M_n$ contains a point in $\partial K$, and therefore $M\cap \partial K$ is nonempty, 
a contradiction.
\end{proof}

\section*{Linear bounds on height}

\begin{proposition}\label{prop: quarter-slab}
Given $b\in (0,\infty)$, there is a $\lambda<\infty$ with the following property.
If $M$ is a translator in $\{z\ge 0\}$ with boundary $\partial M$ contained
in 
\[
    [0,\infty)\times [-b,b]\times \{0\},
\]
then $z\le \lambda x$ for all $(x,y,z)\in M$.
\end{proposition}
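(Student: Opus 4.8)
The plan is to use a barrier argument with a tilted grim reaper surface. Since $\partial M$ lies in the quarter-slab $[0,\infty)\times[-b,b]\times\{0\}$, the key point is that $\partial M$ does \emph{not} enter the region $\{x<0\}$, so we have room to place a barrier on that side. First I would take a tilted grim reaper surface $T$ over a strip $\RR\times(-c,c)$ with $c$ slightly larger than $b$ (say $c=b+1$, assuming $2b\geq \pi$ is not automatic — if $b<\pi/2$ we enlarge to ensure the strip has width $\geq\pi$, which only makes $\lambda$ larger and is harmless), oriented so that its graph increases in the positive $x$-direction with slope $\tan\theta$ as in \eqref{TGR2}. After translating $T$ in the $x$-direction so that its "low end" is far to the left, the portion of $T$ over the line $\{x=0\}$ can be made to lie above the plane $\{z=0\}$, hence above all of $\partial M$. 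The strip $\RR\times(-c,c)$ strictly contains the projection of $\partial M$.

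Next I would run the standard maximum-principle / comparison argument: vertical translates of $T$ foliate the slab $\{|y|<c\}\times\RR$, and $M\cap(\{|y|<c\}\times\RR)$ — which by Corollary~\ref{convex-corollary} is all of $M$ since the convex hull of the projection of $\partial M$ lies in $[-b,b]$ in the $y$-coordinate — must lie below the appropriate translate of $T$. More carefully: slide the translate of $T$ upward until it is disjoint from $M$, then lower it; the first point of contact would be an interior tangency violating the strong maximum principle (both are translators, equivalently $g$-minimal surfaces), unless the contact happens on $\partial M$. So the conclusion is that $M$ lies below $T$ itself, once $T$ has been positioned so that $T\cap(\{x=0\}\times\RR)$ lies above $\{z=0\}\supset$ (plane of $\partial M$). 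Over the region $\{x\geq 0\}$, the grim reaper barrier $T$ has height bounded by $(\text{slope})\cdot x + (\text{const depending on }c,b)$, because the $\log\cos$ term is bounded above on the fixed strip and the linear term is exactly $x\tan\theta$. Absorbing the additive constant into the linear term on $\{x\geq 1\}$ and handling $\{0\le x\le 1\}$ by the already-established upper bound (Lemma~\ref{boundedness-lemma}, since near $x=0$ the relevant portion of $\partial M$ lies in a compact set) gives $z\le\lambda x$ on all of $M$ for a suitable $\lambda=\lambda(b)$.

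The main obstacle I anticipate is making the comparison argument fully rigorous when $M$ is noncompact and merely properly immersed: one cannot simply "slide down" a barrier from $+\infty$ without an argument, and one must rule out contact escaping to infinity along $M$. I would handle this by exhausting $M$ with compact pieces $M\cap\{x\le R\}$ and observing that $\partial(M\cap\{x\le R\})$ consists of the original boundary (which lies strictly below $T$ by construction) together with $M\cap\{x=R\}$; on the latter, the barrier inequality $z\le\lambda x$ with a provisional large constant is exactly what one would like to bootstrap, so a cleaner route is to instead compare $M$ directly against a single well-placed translate of $T$ using properness plus the fact that $M-t\ee_3\to\varnothing$ or stays in $\{z\ge0\}$; alternatively, invoke the monotone foliation and the Omori–Yau / tangency-at-infinity principle for translators (properly immersed $g$-minimal surfaces satisfy a maximum principle at infinity). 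A second, smaller technical point is the case $2b<\pi$: there is no tilted grim reaper over a strip that narrow, so the barrier strip must be widened to width $\ge\pi$, which is fine since we only need \emph{some} finite $\lambda$.
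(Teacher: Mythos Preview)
Your tilted grim reaper barrier argument has a genuine gap that you correctly identify but do not fill. To ``slide the translate of $T$ upward until it is disjoint from $M$'' you must first know that \emph{some} vertical translate of $T$ lies entirely above $M$. But both $M$ and the tilted grim reaper are noncompact in the $+x$ direction, and a priori $M$ could contain a sequence of points with $z_n - x_n\tan\theta \to +\infty$; then no translate of $T$ is above $M$, and the sliding never starts. Your proposed fixes do not work: the compact exhaustion by $M\cap\{x\le R\}$ is, as you say, circular (the boundary piece on $\{x=R\}$ is exactly where you need the bound you are trying to prove); the vague appeal to ``$M-t\ee_3\to\varnothing$'' is not a hypothesis here; and a maximum principle at infinity of Omori--Yau type would require curvature bounds or completeness assumptions that are not available (and in any case you would need to justify why the supremum of $z-u_c(x,y)$ on $M$ is attained or nearly attained at an interior point). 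The auxiliary step of handling $\{0\le x\le 1\}$ via Lemma~\ref{boundedness-lemma} is likewise circular: the boundary of $M\cap\{x\le 1\}$ includes $M\cap\{x=1\}$, which is not known to be compact until after the linear bound is established.

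The paper avoids this noncompactness trap by constructing a \emph{compact} barrier. One takes a box $[0,B]\times[-B,B]\times[0,B]$ with $B>b$, lets $Q$ be its three faces $\{x=0\}$ and $\{y=\pm B\}$, and lets $S$ be a $g$-area-minimizing surface with $\partial S=\partial Q$; then $S$ is a smooth, compact translator. Near $\{x=0\}$, $S$ contains the graph of some $u:[0,\eps]\times[-b,b]\to\RR$ with $u(x,y)\le\lambda x$ (since $u_x>0$ up to the boundary by the Hopf lemma). Now if $z\le\lambda x$ fails on $M$, take $\hat x=\inf\{x:(x,y,z)\in M,\ z>\lambda x\}$, translate $M$ so that the failure begins at $x=0$, and slide the compact barrier $S$ (and its vertical translates) in the negative $x$-direction: compactness of $S$ guarantees a first-touching point, and the strong maximum principle gives a contradiction. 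The compactness of the barrier is what makes the sliding argument legitimate, and this is the idea your approach is missing.
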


\begin{proof}[Proof of Proposition~\ref{prop: quarter-slab}]
By Proposition~\ref{prop:gap}, 
\[
  M\subset [0,\infty)\times [-b,b] \times [0,\infty).
\]
Let $B>b$ and consider the rectangular box $[0,B]\times [-B,B]\times [0,B]$.
Let $Q$ be the polyhedral surface consisting of the $x=0$ face and the $y=\pm B$
faces of the box.  Let $w: \RR^2\to \RR$ be a bowl soliton.
By adding a constant to $w$, we can assume that $\partial Q$ lies below the graph of $u$.
Let $S$ be a surface (locally integral current, say) in the region
\[
  K := [0,B]\times [-B,B] \times [0,\infty)
\]
with $\partial S=\partial Q$ that minimizes area with respect to the translator metric.
Then $S$ is a translator that is smooth except perhaps at the corners of $\partial Q$.
By Lemma~\ref{boundedness-lemma}, $S$ has compact support.
By the maximum principle, $S$ is not tangent to $\partial K$ at any non-corner point 
of $\partial S$.

Thus there is an $\eps$ such that $S$ contains the graph of a smooth function
\[
    u: [0,\eps]\times [-b,b] \to \RR.
\]
with $\pdf{u}x>0$ at all points in its domain.  Hence there is a $\lambda>0$ for which
\[
     u(x,y) \le \lambda x.
\]

We claim that $z\le \lambda x$ for all $x\in M$.
Suppose not.  Then there exists  $ \hat{x}\in[0,  \infty)$ with
\[
  \hat x = \inf \{ x: (x,y,z)\in M, \, z>\lambda x\}.
\]
Let
\[
  \hat M = M - (\hat x, 0, \lambda \hat x))\cap \{z\ge 0\}.
\]
Then $\hat M$ satisfies the hypotheses of the Proposition, and
\begin{equation}\label{inf-zero}
  \inf \{ x: \text{$(x,y,z)\in \hat M$ and $z> \lambda x$} \} = 0.
\end{equation}
Furthermore, by Theorem~\ref{prop:gap}, 
\begin{equation}\label{walled-in}
  \hat M \subset [0,\infty)\times [-b,b]\times [0,\infty).
\end{equation}
If $S+(0,0,\zeta)$ intersected $\hat M$ for some $\zeta\ge 0$, then there would be a smallest 
$x\le 0$ such that $S':=S+ (x,0,\zeta)$ intersected $\hat M$, and the strong maximum principle would be violated at the point of contact. 
(It follows from~\eqref{walled-in} that the point of contact would be at an interior point
of $\hat M$ and of $S'$.)
Thus $S+(0,0,z)$ is disjoint from $M$ for all $z\ge 0$.

It follow that $\hat M \cap \{x\le \eps\}$ lies below the graph of $u$ and thus that
\[
   z\le \lambda x \quad \text{for}\quad \text{$(x,y,z)\in \hat M$ with $x\le \eps$}.
\] 
But this contradicts~\eqref{inf-zero}.
\end{proof}

\begin{corollary}\label{quarter-slab-corollary}
If $M$ is a translator in $\{z\ge c\}$ with boundary contained in 
\[
   [a,\infty)\times [-b,b]\times \{c\},
\]
then $z\le c + \lambda(x-a)$ for all $(x,y,z)\in M$.
\end{corollary}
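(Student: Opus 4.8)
The plan is to reduce the corollary directly to Proposition~\ref{prop: quarter-slab} by an affine change of coordinates. First I would introduce the translated and horizontally shifted surface
\[
   \widetilde M := \bigl( M - (a, 0, c) \bigr),
\]
so that $\widetilde M$ is again a translator (translators are preserved by horizontal translations, and by vertical translations since equation \eqref{general-translator-equation} is autonomous in $z$; equivalently, the Ilmanen metric $g_{ij}=e^{-z}\delta_{ij}$ is invariant under horizontal translations, and a vertical translation merely rescales it by a positive constant, which does not affect which surfaces are minimal). Under this change of variables a point $(x,y,z)\in M$ corresponds to the point $(x-a,\,y,\,z-c)\in\widetilde M$. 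Since $M\subset\{z\ge c\}$, we get $\widetilde M\subset\{z\ge 0\}$, and since $\partial M\subset [a,\infty)\times[-b,b]\times\{c\}$, we get $\partial\widetilde M\subset [0,\infty)\times[-b,b]\times\{0\}$. Thus $\widetilde M$ satisfies exactly the hypotheses of Proposition~\ref{prop: quarter-slab} with the same value of $b$.

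Next I would apply Proposition~\ref{prop: quarter-slab} to obtain a constant $\lambda<\infty$ (depending only on $b$) such that every point $(\tilde x,\tilde y,\tilde z)\in\widetilde M$ satisfies $\tilde z\le\lambda\tilde x$. Translating back, a point $(x,y,z)\in M$ gives $\tilde x = x-a$ and $\tilde z = z-c$, so the inequality $\tilde z\le\lambda\tilde x$ becomes
\[
   z - c \le \lambda (x - a),
\]
that is, $z\le c + \lambda(x-a)$ for all $(x,y,z)\in M$, which is precisely the claimed bound. Note the constant $\lambda$ is the same one produced by Proposition~\ref{prop: quarter-slab} for the given $b$, so it does not depend on $a$ or $c$.

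There is really no serious obstacle here; the only point requiring a word of care is the invariance used in the first step — namely that both horizontal translation and vertical translation carry translators to translators — and that the domain-of-boundary condition and the half-space condition transform correctly under the shift $(x,y,z)\mapsto(x-a,y,z-c)$. Once those bookkeeping points are checked, the corollary is immediate from the proposition.
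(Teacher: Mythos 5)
Your proof is correct and is precisely the translation-by-$(a,0,c)$ reduction to Proposition~\ref{prop: quarter-slab} that the paper has in mind (the paper leaves the corollary's proof implicit). The bookkeeping about invariance of the translator equation under horizontal and vertical translations is exactly the point worth a word, and you handle it cleanly.
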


If $M$ is a surface in $\RR^3$, define
\[
  \zmax(M,t) = \sup \{z:  (x,y,z)\in M, \, x=t\}.
\]

\begin{proposition}\label{prop:zmax-theorem}
Given $B$, there is a $C=C_B$ with the following property.
If $M$ is a translator in $M\cap \{x\ge 0\} \cap \{|y|\le B\}$ with $\partial M\subset \{x=0\}$, 
then for $0\le x \le x'$,
\[
   \zmax(M,x') \le \zmax(M,x) + C|x'-x|.
\]
\end{proposition}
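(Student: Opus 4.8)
The plan is to reduce the statement to Corollary~\ref{quarter-slab-corollary} by a simple cutting argument. Fix $B$ and let $\lambda = \lambda(B)$ be the constant from Proposition~\ref{prop: quarter-slab} (applied with $b = B$); I claim $C_B := \lambda$ works. Given $0 \le x \le x'$, I want to control $\zmax(M,x')$ in terms of $\zmax(M,x)$. Set $c := \zmax(M,x)$ and consider the truncated surface
\[
   M' := (M \cap \{t \ge x\}) \cap \{z \ge c\}.
\]
The key point is to understand $\partial M'$. Since $\partial M \subset \{x = 0\}$, the original boundary of $M$ does not meet $\{t \ge x\}$ for $x > 0$ (and for $x = 0$ the statement is trivial after noting $\zmax(M,0) \ge \zmax(M,0)$), so any boundary of $M'$ comes from the two cuts: the plane $\{t = x\}$ and the plane $\{z = c\}$. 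By the definition of $c = \zmax(M,x)$, the slice $M \cap \{t = x\}$ lies entirely in $\{z \le c\}$, hence the portion of $\partial M'$ coming from the cut $\{t = x\}$ in fact lies in $\{t = x\} \cap \{z \le c\}$, and combined with the other cut we get $\partial M' \subset \{t = x\} \times \{z = c\}$ — wait, more carefully: $\partial M'$ is contained in $(\{t = x\} \cap \{z \ge c\}) \cup (\{z = c\} \cap \{t \ge x\})$, and since $M \cap \{t = x\} \subset \{z \le c\}$, the first piece is just the curve $M \cap \{t = x\}$ sitting at height $\le c$; so altogether $\partial M' \subset \{t \ge x\} \times \{z = c\}$ together with a subset of $\{z \le c\}$. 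The honest way: after the two truncations, $\partial M' \subset \{z \ge c, t \ge x\}$ and lies in $\partial(\{z \ge c\} \cap \{t \ge x\})$; the part on $\{z = c\}$ lies in $[x,\infty) \times [-B,B] \times \{c\}$, and the part on $\{t = x\}$ is $M \cap \{t = x\} \cap \{z \ge c\}$, which by choice of $c$ is contained in $\{t = x\} \times \{z = c\}$ hence also in $[x,\infty) \times [-B,B] \times \{c\}$.

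So $\partial M' \subset [x,\infty) \times [-B,B] \times \{c\}$, and $M' \subset \{z \ge c\}$ by construction. We also know $M \subset \{|y| \le B\}$, so $M' \subset \{|y| \le B\}$. Now I apply Corollary~\ref{quarter-slab-corollary} with the roles of its $(a,b,c)$ played by $(x, B, c)$: it gives $z \le c + \lambda(t - x)$ for all $(t,y,z) \in M'$. In particular, at $t = x'$: every point of $M$ with $t = x'$ and $z \ge c$ lies in $M'$, so it satisfies $z \le c + \lambda(x' - x)$; and points of $M$ with $t = x'$ and $z < c$ trivially satisfy $z < c \le c + \lambda(x'-x)$. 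Hence $\zmax(M,x') \le c + \lambda(x'-x) = \zmax(M,x) + C_B|x' - x|$, as desired.

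\textbf{Main obstacle.} The routine-looking step that actually needs care is verifying that the truncated surface $M'$ genuinely satisfies the hypotheses of Corollary~\ref{quarter-slab-corollary} — in particular that its boundary is confined to the correct horizontal half-slab $[x,\infty) \times [-B,B] \times \{c\}$ rather than escaping somewhere. This hinges entirely on the definition of $\zmax$: the slice $M \cap \{t = x\}$ must lie at height $\le c = \zmax(M,x)$, which forces the vertical cut to contribute nothing above height $c$ and keeps all new boundary on the horizontal cut plane $\{z = c\}$. The only other subtlety is the mild one that $M'$ may fail to be smooth along the two cut curves; but Corollary~\ref{quarter-slab-corollary} and its parent Proposition~\ref{prop: quarter-slab} are proved by comparison with bowl solitons and grim-reaper-type barriers via the maximum principle, which is insensitive to corners along the boundary (indeed Proposition~\ref{prop: quarter-slab}'s own proof already uses a barrier $S$ that is only smooth away from corners), so this causes no trouble. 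One should also note the degenerate case $x = 0$ separately, where the statement $\zmax(M,x') \le \zmax(M,0) + Cx'$ follows directly from Corollary~\ref{quarter-slab-corollary} applied to $M$ itself (with $a = 0$, $c = 0$ after a vertical translation, or more simply since $\partial M \subset \{x=0\}$ already has the required form).
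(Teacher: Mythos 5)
Your proposal is correct and follows essentially the same route as the paper: truncate $M$ above the height $c=\zmax(M,x)$ and (for $x>0$) ahead of the plane $\{t=x\}$, observe that the resulting boundary sits in the half-slab $[x,\infty)\times[-B,B]\times\{c\}$ because the slice $M\cap\{t=x\}$ lies at height $\le c$ by definition of $\zmax$, and then invoke Corollary~\ref{quarter-slab-corollary}. The paper states this more tersely — it first treats $x=0$ via Proposition~\ref{prop: quarter-slab} after a vertical normalization to $\zmax(M,0)=0$ (noting this also yields finiteness of $\zmax(M,x')$ for all $x'$), then says to "repeat the argument" for the shifted base point, which is precisely your direct application of Corollary~\ref{quarter-slab-corollary}; you would only want to add the one-line remark that when $\zmax(M,x)=\infty$ the inequality is vacuous, which is what the paper's opening case dispatch handles.
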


\begin{proof}[Proof of Proposition~\ref{prop:zmax-theorem}]  Note that the desired inequality is valid if and only if it is valid for all vertical translates of $M$.
 First, let $x=0$. If $\zmax(M,0)=\infty$, there is nothing to prove. Otherwise, we may assume, without loss of
 generality,  that $\zmax(M,0)=0$. Let $M'=M\cap\{z\geq 0\}$. Note that $M'$ satisfies the assumptions
 of Proposition~\ref{prop: quarter-slab}, so the desired inequality is satisfied with  $C=\lambda$. We now know that
 $M\cap\{x=x'\}$ is bounded above for any value of $x'$. Repeating the argument above for $0<x'\leq x''$ completes the proof.

\end{proof}

 \section*{Translating Scherk-like  graphs}
 
 In the proofs of Theorems~\ref{th:bounded-by-lines} and \ref{th:narrow-slabs}, we will use, as barriers,  graphical translators defined over parallelograms and  bounded by four vertical lines at the corners. These are analogs of the classical Scherk minimal surfaces in $\RR^3$, which exist over rhombi. 
 
 As in the minimal surface case, it suffices to find solutions defined over a parallelogram with
boundary values $+\infty$ on one pair of opposite sides, and $-\infty$ on the other pair. We will state the existence result for  Scherk-like translators (\cite {scherk},  Section~3)
in the special case we will use here:  rectangles $(\alpha=\pi/2$ in the notation of \cite{scherk}). (The general result is for parallelograms with angle $\alpha$ between $0$ and $\pi$ and height $\beta$  between 
$0$ and $\pi$.)
\begin{proposition} \label{prop: Scherk-translator} Fix $\beta \in (0,\pi/2)$ and let $R_L=R_{L,\beta}= (-L,L)\times(-\beta, \beta)$. There exists a length $L(\beta)$ and a translator
$$ v_{\beta}:R_{L(\beta),\beta} \rightarrow \RR$$ 
such that 
\begin{align}\label{eqn:scherk}
v_\beta(\pm L(\beta), y ) &=+\infty    \qquad (|y|<\beta), \\
v_\beta(x, \pm \beta) &= -\infty   \qquad (|x| < \alpha). \nonumber
\end{align}
The graph of $v_\beta$ is bounded by the vertical lines passing through the four corners of $R_{L(\beta),\beta}$.
By adding a constant to $v_\beta$, we may assume that  $v_\beta (0,0) =0$.  Furthermore, as $\beta\rightarrow \pi/2$, 
$L(\beta)\rightarrow\infty$ and  $v_\beta\rightarrow u$, the grim reaper surface
$$u(x,y)=\log(\cos y), \,\, (x,y)\in\RR\times(-\pi/2,\pi/2).$$

\end{proposition}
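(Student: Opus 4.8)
The plan is to construct $v_\beta$ by the Jenkins--Serrin method adapted to the translator graph equation $\ddiv(Dv/W)=-1/W$ (this is the graphical form of \eqref{general-translator-equation}; here $W=\sqrt{1+|Dv|^2}$), i.e. by solving Dirichlet problems on $R_{L,\beta}$ with \emph{finite} boundary data and then letting the data tend to $+\infty$ on the vertical sides and $-\infty$ on the horizontal sides, for one carefully chosen length $L=L(\beta)$. Concretely, for each $L>0$ and each $n$ I would solve the translator equation on $R_{L,\beta}$ with continuous boundary data $g_n$ equal to $+n$ on the two vertical sides, $-n$ on the two horizontal sides, and linearly interpolated on short arcs near the four corners. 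Solvability and smoothness up to the boundary (away from the corners) follow from standard quasilinear elliptic theory together with the a priori interior and boundary gradient estimates for translator graphs already used in Section~\ref{sec:DeltaWings} (vertical translates of bowl solitons and of grim reaper surfaces serve as barriers, exactly as in the proof of Proposition~\ref{prop:gap}); at each convex right-angle corner the solution extends continuously to the boundary except possibly at the corner itself, which suffices. By uniqueness of the Dirichlet solution, $v_{L,n}$ inherits the symmetries $v_{L,n}(x,y)=v_{L,n}(-x,y)=v_{L,n}(x,-y)$; after adding a constant we normalize $v_{L,n}(0,0)=0$, and then $Dv_{L,n}(0,0)=0$ by symmetry.

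The core of the argument is to identify the correct $L$ and pass to the limit. Integrating $\ddiv(Dv_{L,n}/W)=-1/W$ over $R_{L,\beta}$ and over convex sub-rectangles, and tracking how the boundary flux $\tfrac{Dv_{L,n}}{W}\cdot\eta$ saturates toward $+1$ (resp. $-1$) on the sides where $g_n\to+\infty$ (resp. $-\infty$), one obtains the balancing constraints of Jenkins--Serrin type: a locally finite limit can persist only for $L$ in a definite range, with $L>\beta$ forced by the total flux identity. A monotonicity/continuity argument in $L$ then singles out a unique value $L=L(\beta)$ for which the $v_{L(\beta),n}$ converge, as $n\to\infty$ and locally smoothly on $R_{L(\beta),\beta}$, to a translator $v_\beta$ with $v_\beta=+\infty$ on the open vertical sides and $v_\beta=-\infty$ on the open horizontal sides, i.e. \eqref{eqn:scherk} holds; a barrier comparison at the corners (using Scherk-like translators over thin triangles, or simply the gradient estimate localized near a corner) shows that the graph of $v_\beta$ is asymptotic to and bounded by the four vertical lines through the corners of $R_{L(\beta),\beta}$, and the normalization $v_\beta(0,0)=0$ passes to the limit. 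That $L(\beta)<\infty$ for every $\beta<\pi/2$ is where the hypothesis $\beta<\pi/2$ enters: if $L$ were too large, a compactness argument near $x=0$ (using the translator gradient estimate) would extract an $x$-translation-invariant limit, a complete \emph{flat} translating graph over the strip $\RR\times(-\beta,\beta)$ of width $2\beta<\pi$, contradicting Remark~\ref{remark: no-thin-graphs}.

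For the limit $\beta\uparrow\pi/2$: the horizontal sides, on which $v_\beta=-\infty$, are pushed out toward $y=\pm\pi/2$, where the grim reaper surface $u(x,y)=\log\cos y$ is itself $-\infty$; correspondingly the constraint forcing $L(\beta)<\infty$ disappears in the limit, so $L(\beta)\to\infty$ (read off the same balancing analysis, using $\pi/2>1$). To identify the limit surface, take any sequence $\beta_i\uparrow\pi/2$; the curvature and area estimates (as in Remark~\ref{rmk:area-estimate}, or directly from the translator gradient estimate on compact subsets of $\RR\times(-\pi/2,\pi/2)$, since $L(\beta_i)\to\infty$) give, after passing to a subsequence, smooth local convergence of the graphs of $v_{\beta_i}$ to a complete translating graph $\Sigma$ over $\RR\times(-\pi/2,\pi/2)$, symmetric in both coordinate planes, passing through the origin with horizontal tangent plane there. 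By Proposition~\ref{strictly convex graph} a complete translating graph with $K>0$ over $\RR\times(-b,b)$ has $b>\pi/2$, so $\Sigma$ cannot have $K>0$; hence $K\equiv 0$ on $\Sigma$, and by the flatness discussion in Remark~\ref{rmk:flat-implies GR} $\Sigma$ is a tilted grim reaper surface \eqref{TGR2} over a strip of width $\pi$. The symmetry $\Sigma(x,y)=\Sigma(-x,y)$ rules out nonzero tilt, and the normalization $\Sigma(0,0)=0$ fixes the additive constant, so $\Sigma$ is the graph of $\log\cos y$. Since the limit is independent of the subsequence, $v_\beta\to u$ as $\beta\to\pi/2$, completing the proof.

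The step I expect to be the main obstacle is the second one: establishing the precise Jenkins--Serrin-type flux/balancing conditions for the translator equation that pin down the unique length $L(\beta)$, together with the uniform-in-$n$ a priori estimates away from the four sides that make the passage to genuinely infinite boundary data legitimate (this is the translator analog of the classical existence theory, and the role of $\beta<\pi/2$ is precisely to guarantee that a finite-height solution with $-\infty$ data on the long sides can exist at all). The corner analysis and the convexity of the competitor sub-rectangles keep the flux bookkeeping clean, and everything downstream — boundedness of the graph by the corner lines, the normalization, and the $\beta\to\pi/2$ limit — is then routine given the estimates and the classification of complete translating graphs.
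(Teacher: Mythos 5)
The paper does not actually prove Proposition~\ref{prop: Scherk-translator}: it is imported verbatim from the companion paper \cite{scherk} (Section~3), stated here only as the barrier needed for Theorems~\ref{th:bounded-by-lines} and~\ref{th:narrow-slabs}. So there is no ``paper's own proof'' to measure you against, and your proposal has to be judged on its own terms.

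As a strategy, a Jenkins--Serrin-type approximation with finite data $\pm n$, together with the flux bookkeeping for $\ddiv(Dv/W)=-1/W$, is a perfectly reasonable way to attack the existence statement, and you have correctly noticed the structural difference from the minimal-surface case: the identity $\int_{\partial\Omega}\frac{Dv}{W}\cdot\nu=-\int_\Omega\frac{1}{W}<0$ forces $4\beta-4L<0$, hence $L>\beta$, rather than the classical equal-perimeter condition $L=\beta$ that produces Scherk's surface over a square. The $\beta\to\pi/2$ endgame is also clean: local compactness plus the Spruck--Xiao/HIMW classification (Proposition~\ref{strictly convex graph}, Remark~\ref{rmk:flat-implies GR}) forces the limit to be flat, and the two reflection symmetries then eliminate the tilt, identifying the limit with $\log\cos y$.

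The real gap is the step you yourself flag as the main obstacle, namely pinning down $L(\beta)$. The flux identity alone is implicit: $4(L-\beta)=\int_{R_{L,\beta}}1/W$, and the right-hand side depends on the unknown solution, so it does not by itself single out a length. For the classical minimal Jenkins--Serrin theory the solvability conditions are purely metric data of the polygon; here they are not, and ``a monotonicity/continuity argument in $L$ then singles out a unique value'' is precisely the content that has to be supplied, not waved at. One needs (i) uniform-in-$n$ interior estimates that are valid for an interval of $L$, (ii) a dichotomy showing that for $L$ too small the approximants drift to $+\infty$ and for $L$ too large they drift to $-\infty$, and (iii) continuity and strict monotonicity of the drift in $L$ so the intermediate value theorem applies; none of these is routine for the inhomogeneous equation. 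A second, smaller, soft spot: your argument for $L(\beta)\to\infty$ only shows that the a priori upper bound on $L$ (roughly $L<\beta/(1-\beta)$, coming from $\int 1/W\le\area$) disappears as $\beta\uparrow\pi/2$; the disappearance of an upper bound is not the same as $L(\beta)\to\infty$, and one should instead argue by contradiction, extracting a finite-$L^*$ limit with $+\infty$ data on the vertical sides at $\beta=\pi/2$ and ruling it out by a grim-reaper barrier. Finally, a technical nit: you cannot ``add a constant to normalize $v_{L,n}(0,0)=0$'' once the Dirichlet data are fixed; the normalization has to be built into the boundary data, e.g.\ $g_n=\pm n-c_{L,n}$ with $c_{L,n}$ chosen so the solution vanishes at the origin.
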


 \begin{figure}[htbp]\label{fig:Scherk translator}
\begin{center}
\includegraphics[height=5.5cm]{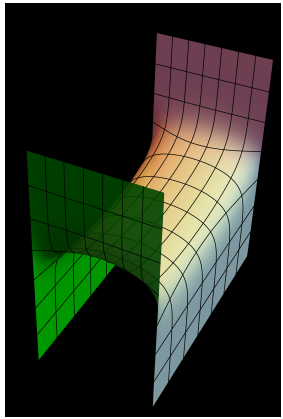}
\caption{\small A Scherk translator, $ v_{\beta}$, as described in  Proposition~\ref{prop: Scherk-translator}.}
\label{fig:scherk}
\end{center}
\end{figure}

\section*{The proof of Theorem~\ref{th:bounded-by-lines}: Translators in   $\{z\geq0\}$ with straight-line boundaries in  $\{z=0\}$}

For the reader's convenience we restate the theorem. Without loss of generality, we may assume that $c=0$.
\begin{*theorem}\label{th:Scherk-translator}
Let $M$ be a connected,  properly embedded translator in $\{z\ge 0\}$ bounded by two parallel lines
$y=\pm b$ in the plane $z=0$.  
Suppose that either
$M\cap \{x=0\}$ is bounded or  that $M$ is simply connected.
    Then $M$ is a portion of a grim reaper surface.
That is, $b<\pi/2$ and $M$ is the graph of the function
$$u-\log(\cos b)= \log(\cos y)-\log(\cos b), \,\,\, |y|<b.$$
\end{*theorem}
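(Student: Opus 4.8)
The plan is to show that $M$ is a complete translating graph over the strip $\{|y| < b\}$; once that is known, the classification in Theorem~\ref{th:graphs} (graphs are grim reapers, tilted grim reapers, $\Delta$-wings, or the bowl soliton) together with the fact that the boundary consists of the two parallel lines $\{y = \pm b\}\cap\{z=0\}$ forces $M$ to be a grim reaper surface, hence $b < \pi/2$ and $M$ has the stated form. Indeed, a tilted grim reaper surface or a $\Delta$-wing over a strip of width $2b$ has boundary (in $\{z=0\}$) equal to a \emph{single} line, not two parallel lines, and the bowl soliton is entire. So the heart of the matter is graphicality.

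The first step is to show $M$ is graphical over (a subset of) the strip, using the minimal foliation function machinery together with the Scherk-like translators of Proposition~\ref{prop: Scherk-translator}. I would argue that the tangent plane to $M$ is never vertical: if some tangent plane were vertical with horizontal normal direction $\vv$, then I would slide a Scherk translator $v_\beta$ (for $\beta$ close to $\pi/2$, so oriented appropriately after rotation) in from far away until it first touches $M$, violating the strong maximum principle at an interior tangency --- here one uses that the boundary of $M$ lies in $\{z=0\}$ on two lines and $M \subset \{z \ge 0\}$, so a contact can be forced to be interior. (One must also use Proposition~\ref{prop:gap} / Corollary~\ref{convex-corollary} to confine $M$: since $\partial M$ lies in the slab $\{|y| < b\}$ with $b < \pi/2$ assumed \emph{a priori} --- actually we cannot assume this yet, so one uses instead the width bound $\{|y| < B\}$ and the linear height bounds of Proposition~\ref{prop:zmax-theorem} and Corollary~\ref{quarter-slab-corollary} to get the needed confinement.) Once all tangent planes are non-vertical, Lemma~\ref{lem:graphical} applies provided $M$ separates its ambient convex domain into two components --- which holds when $M$ is simply connected (it separates by Alexander duality) or, in the other case, when $M \cap \{x = 0\}$ is bounded, which lets one run a similar separation argument after capping off.

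The main obstacle is handling the case where $M$ is \emph{not} assumed simply connected but only $M\cap\{x=0\}$ is bounded: here one must rule out, e.g., that $M$ is an annulus (or higher genus) and must produce the separation hypothesis needed for Lemma~\ref{lem:graphical}. The strategy would be: the boundedness of $M \cap \{x=c\}$ for one $c$, combined with the linear height growth from Proposition~\ref{prop:zmax-theorem}, forces $M \cap \{x = c'\}$ to be bounded for every $c'$, so $M$ is "thin" in the $x$-direction uniformly; then a parity/intersection-number argument with the vertical-plane foliation functions $F_\vv$ (as in Proposition~\ref{prop:N-estimates}) bounds the topology and shows each slice $M\cap\{x=c\}$ is a single arc joining the two boundary lines, whence $M$ is a graph over the strip. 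Once graphicality is established over the full strip $\{|y|<b\}$, $M$ is a complete translating graph whose horizontal boundary is two parallel lines, and Theorem~\ref{th:graphs} together with Proposition~\ref{prop:bounded-limit}/Remark~\ref{rem:b<pi/2} identifies it as $z = \log(\cos y) - \log(\cos b)$ with $b < \pi/2$.
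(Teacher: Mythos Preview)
Your overall strategy---prove graphicality first, then invoke the classification of complete translating graphs---is a genuine departure from the paper's argument, and several of the steps you outline do not go through.

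The paper never proves that $M$ is a graph. Instead, after using Proposition~\ref{prop:gap} to get $M\subset\{|y|\le b\}$ and $b<\pi/2$, it establishes a linear height bound $z\le h+\lambda|x|$ (this is where the two hypotheses are used, via Proposition~\ref{prop: quarter-slab} and, in the simply connected case, the convex-hull Corollary~\ref{convex-corollary}), and then runs a \emph{sandwich argument}: Scherk translators $v_\beta$ with $b<\beta<\pi/2$ are used as upper barriers, and as $\beta\to\pi/2$ they force $M$ to lie \emph{below} the grim reaper graph $\log(\cos y)-\log(\cos b)$; the rectangle translators $u_{a,b}$ are used as lower barriers, and as $a\to\infty$ (Remark~\ref{rem:b<pi/2}) they force $M$ to lie \emph{above} the same graph. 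Hence $M$ equals that graph. No graphicality, no Lemma~\ref{lem:graphical}, no foliation-function counts.

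Your route has real gaps. First, the bound $\mathsf{N}(F_\vv|M)\le 2$ in Proposition~\ref{prop:N-estimates} is proved only for surfaces in the specific families $\Cc$ and $\Aa$ (via their boundary structure and limits); you have no such bound for an arbitrary translator $M$ as in the theorem, so the ``parity/intersection-number argument'' is unavailable. Second, the proposed Scherk-sliding argument to rule out vertical tangent planes is not a mechanism that detects vertical tangencies: a first-touch with a Scherk barrier gives a maximum-principle contradiction regardless of whether the tangent plane there is vertical, so it does not isolate the vertical case, and in any event the Scherk surfaces themselves have vertical-line boundaries that complicate ``sliding in from far away.'' Third, even if you established graphicality, $M$ has boundary, so Theorem~\ref{th:graphs} (which classifies \emph{complete} translating graphs) does not apply; you would still need a uniqueness statement for bounded translating graphs over a strip with zero boundary values---and that is exactly what the paper's barrier argument supplies directly.
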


\begin{proof}
First we observe (by Proposition~\ref{prop:gap}) that
\[
   M \subset \{ |y|\le b\}
\]
and that $b< \pi/2$.
We will prove the theorem by showing that $M$ lies in the closed region above the graph of $u-\log(\cos b)$ and also in the closed region below the graph of $u-\log(\cos b)$.
\begin{claim}
 If $M\cap \{x=0\}$ is bounded, then there is a $\lambda<\infty$ such that 
\begin{equation}\label{eq: linear bound}
   z\le h + \lambda |x| \quad\text{for all $(x,y,z)\in M$}.
\end{equation}
\end{claim}

\begin{proof}[Proof of Claim 1]Let $h= \sup\{z: (0,y,z)\in M\}$.
By hypothesis, $h< \infty$.  By Proposiiton~\ref{prop: quarter-slab} applied
to $(M-h)\cap\{z\ge 0\} \cap \{x\ge 0\}$ and to $(M-h)\cap\{z\ge 0\}\cap \{x\le 0\}$,
we see that we have the the linear bound \eqref{eq: linear bound}.
 \end{proof}
 
 \begin{claim} If $M$ is simply connected then we have the linear bound \eqref{eq: linear bound}.
 \end{claim}
 \begin{proof}[Proof of Claim 2]
 Because $M$ is simply connected,  a curve $\gamma$ in $M$ that begins on one of the boundary lines and ends on the other
 divides $M$ into two components.   Each component is bounded by $\gamma$ together with two  divergent rays, one 
 in each of the boundary lines. Let  $M_+$ be the component whose  boundary contains rays diverging in the positve $x$-direction,
  and  let $M_-$ be the other one.   Denote by $C_+$  be the convex hull of the projection of $\partial M_+$ onto the strip $\{|y|\leq b\}\cap\{z=0\}$. By 
  Lemma~\ref{boundedness-lemma}, $M_+$  lies in  $C_+\times [0,\infty]$.  For suitably small $a\in \RR$,  $C_+\subset{x\geq a}\times\{|y|\leq b\}\cap\{z=0\}$.
  Hence we may apply Proposiiton~\ref{prop: quarter-slab} to conclude, as in Claim~1, that $M_+$ satisfies  \eqref{eq: linear bound}.  An analogous argument
  works for $M_-$.
 \end{proof}
By Proposition~\ref{prop: Scherk-translator}
for each $\beta$ with $b<\beta< \pi/2$, there is an 
$L=L(\beta)>0$ 
and a function
\[
    v_\beta: R_{L,\beta} \to \RR
\]
such that the graph of $v_\beta$ is a translator satisfying \eqref{eqn:scherk}, with $v_\beta(0,0) =0$.

 By the maximum principle, the minimum value
 of 
 \[
   (x,y,z) \in M \cap \{ |x|< L\} \mapsto  v_\beta(x,y)-z
 \]

 must occur on the boundary of $M \cap \{ |x|< L(\beta)\}$.   Thus,
 \[v_\beta(x,y)-z\ge \min_{|x|\leq L(\beta)} v_\beta(x,b),
 \]
 or
 
  \[v_\beta(x,y) - \min_{|x|\leq L(\beta)}u_\beta(x,b)\geq z.
 \]

 As $\beta \to \pi/2$,  $L(\beta)\rightarrow \infty$,  and the function $v_\beta$  converges to the function $u(x,y)=\log(\cos y)$ on the strip $\RR\times (-b,b)$ between the planes $\{y=\pm b\}$.
 (See Proposition~\ref{prop: Scherk-translator}.)  Hence $M$ lies in the closed region below the graph of $u=\log(\cos y)-\log(\cos b)$. 
  Thus  $M$ lies in the closed region below the graph of $u$.
 
 Now for $a>0$, let $w_a=u_{a,b}$ be the  translator given in \eqref{eqn:D(L,b)}, i.e.
 \[
    w_a: [-a,a]\times[-b,b] \to \RR
 \]
 is the  unique translator with boundary values $0$.  
 
  By the maximum principle, 
 $M\cap \{|x|\le a\}$  lies in the closed region of   $[-a,a]\times[-b,b]\times \RR$ above the graph of $w_a$.  Letting $a\to\infty$, 
 it follows from Proposition~\ref{prop:bounded-limit} and Remark~\ref{rem:b<pi/2} in Section~\ref{sec:DeltaWings} that  $w_a$ converges uniformly to the function $\log(\cos y)-\log(\cos b)$ on $\RR\times[-b,b]$
 Hence,  $M$ lies above the graph of $u$.

 \end{proof}

\section*{The proof of Theorem~\ref{th:narrow-slabs}: Translators in a slab of less than $\pi$}

\begin{theorem*} Suppose $M$ is a properly embedded and connected translator that lies in a vertical slab $\{|y|<B\}$.
If there exists a constant $a$ such that $M\cap\{x=a\}$ is bounded above, then $B\geq \pi$.
\end{theorem*}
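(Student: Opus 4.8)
The plan is a proof by contradiction modelled on the Scherk-barrier argument in the proof of Theorem~\ref{th:bounded-by-lines}. After a translation in the $x$-direction we may assume $c=0$, so $\zmax(M,0):=\sup\{z:(0,y,z)\in M\}<\infty$; since $M$ is nonempty we may also assume $M\cap\{x=0\}\neq\varnothing$ (if $M$ misses this plane it lies in a half-slab, and one runs the argument below with the barrier translated into that half-slab). Suppose, for contradiction, that $M\subset\{|y|<B\}$ with $B<\pi$. First, apply Proposition~\ref{prop:zmax-theorem} separately to the translators-with-boundary $M\cap\{x\ge 0\}$ and $M\cap\{x\le 0\}$ (each has boundary in $\{x=0\}$ and lies in $\{|y|\le B\}$, and $0$ may be taken to be a regular value of $x|_M$). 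This yields a constant $C=C_B<\infty$ with $\zmax(M,x)\le\zmax(M,0)+C|x|$ for all $x$; in particular $M$ is bounded above on every region $\{|x|\le L\}\cap\{|y|<B\}$. This linear bound is the only point at which Proposition~\ref{prop:gap} enters, via Propositions~\ref{prop: quarter-slab} and \ref{prop:zmax-theorem}.

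Next I would choose the barrier. Put $B':=\sup_M|y|\le B<\pi$ and fix $\beta$ with $B'<\beta$ and $\beta$ below the admissible threshold for Scherk-like translators (for $B'<\pi/2$ one may take $\beta\in(B',\pi/2)$ and invoke Proposition~\ref{prop: Scherk-translator} directly; for the remaining range one uses the Scherk-like translators over suitably acute parallelograms mentioned after the statement of Proposition~\ref{prop: Scherk-translator}, arranged so that their two ``$-\infty$-sides'' are the lines $\{y=\pm\beta\}$). In either case one obtains a bounded convex domain $P\subset\{z=0\}$ and a translating graph $v:P\to\RR$ with $v\to+\infty$ along one pair of opposite sides of $\partial P$ and $v\to-\infty$ along the sides $\{y=\pm\beta\}$. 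After translating $P$ in the $x$-direction we may assume its $x$-projection meets that of $M$ (which is a nondegenerate interval, since $M$ is complete, embedded and contained in the slab, hence is not contained in a vertical plane), so $M\cap(P\times\RR)\neq\varnothing$.

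Now consider $\Phi:=v-z$ on $M\cap(P\times\RR)$. Since $B'<\beta$, the surface $M$ stays uniformly away from $\{y=\pm\beta\}$; along the ``$+\infty$-sides'' of $P$ one has $v\to+\infty$ while $z$ is bounded above by the first step, so $\Phi\to+\infty$; and $\Phi\to+\infty$ as $z\to-\infty$ inside $P\times\RR$, since there $v$ stays bounded below. Thus $\Phi$ is proper and bounded below on $M\cap(P\times\RR)$ and attains its minimum at a point $p_0$ whose $(x,y)$-projection lies in the interior of $P$; in particular $p_0$ is an interior point of $M$. Writing $t_0:=(z-v)(p_0)$, the leaf $L_{t_0}:=\{z=v(x,y)+t_0\}$ of the minimal foliation $\{z=v+t\}$ lies weakly above $M$ near $p_0$ and is tangent to $M$ there. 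Since $M$ and all the $L_t$ are translators, the strong maximum principle forces $M$ to coincide with $L_{t_0}$ near $p_0$; by unique continuation (translators are real-analytic) and connectedness of the interior of $L_{t_0}$, $M$ then contains all of that interior. But the interior of $L_{t_0}$ contains points with $|y|$ arbitrarily close to $\beta$, contradicting $\sup_M|y|=B'<\beta$. Hence $B\ge\pi$.

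The crux is the tangency step. Because $M$ is not assumed graphical, one cannot simply compare two graphs via the maximum principle; one must slide the entire barrier family $\{z=v+t\}$ downward against the embedded surface $M$ and seize the first point of contact. The properness of $\Phi=v-z$ on $M\cap(P\times\RR)$ — which is exactly where the linear height bound (hence ultimately Proposition~\ref{prop:gap}) and the fact that $M$ avoids the ``$-\infty$-sides'' of the barrier come in — is what guarantees the contact point is interior, and thus what makes the strong maximum principle bite. A secondary technical point is producing Scherk-like parallelogram barriers with ``$-\infty$-sides'' at $\{y=\pm\beta\}$ for $\beta$ up to the required threshold; for $B<\pi/2$ this is already contained in Proposition~\ref{prop: Scherk-translator}.
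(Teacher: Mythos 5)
Your strategy is the same as the paper's: first use the Scherk-like translators of Proposition~\ref{prop: Scherk-translator} as barriers, and make the barrier argument legal by first establishing a linear upper bound on the height of $M$ via Proposition~\ref{prop: quarter-slab} / Corollary~\ref{quarter-slab-corollary} (you route this through Proposition~\ref{prop:zmax-theorem}, the paper routes it through Corollary~\ref{quarter-slab-corollary}; these are equivalent). Your sliding/tangency discussion is a more explicit write-up of the one-line maximum-principle appeal in the paper, and is correct for the rectangular barriers.

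Where the two diverge is in the range of $B$ treated. The paper's proof begins ``Suppose $B<\pi/2$'' and uses the rectangular Scherk barriers of Proposition~\ref{prop: Scherk-translator} with $B<\beta<\pi/2$; as written it therefore establishes $B\ge\pi/2$, not the stated $B\ge\pi$ (this appears to be an inconsistency in the paper between the statement and the proof; the GMM result quoted in the same remark, normalized to the convention $\{|y|<B\}$, also gives $B\ge\pi/2$). You instead take the stated conclusion $B\ge\pi$ at face value and try to run the contradiction for all $B<\pi$. For $B'<\pi/2$ your argument is exactly the paper's. For the remaining range $\pi/2\le B'<\pi$ you invoke ``Scherk-like translators over suitably acute parallelograms \dots arranged so that their two $-\infty$-sides are the lines $\{y=\pm\beta\}$'' with $\beta$ up to $\pi$. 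This is a genuine gap: Proposition~\ref{prop: Scherk-translator} (the version actually stated and used in this paper) only produces rectangular barriers with $\beta\in(0,\pi/2)$, and nothing in the paper asserts that the general parallelogram family can be arranged with a horizontal pair of $-\infty$-sides at vertical separation close to $2\pi$. Indeed the parenthetical remark before Proposition~\ref{prop: Scherk-translator} describes the parameter range as ``height $\beta$ between $0$ and $\pi$,'' but in the rectangular specialization that same ``height'' corresponds to a $y$-extent of $2\beta<\pi$, which suggests the vertical span of the $-\infty$ sides is bounded by $\pi$ (i.e.\ $\beta<\pi/2$) in general, not $2\pi$. So the extension step as you state it is unjustified, and without it your argument, like the paper's, only yields $B\ge\pi/2$.

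Two smaller points. First, $M$ is closed (no boundary), so the sets $M\cap\{x\ge 0\}$ and $M\cap\{x\le 0\}$ indeed have boundary in $\{x=0\}$; but you should also check the hypothesis $\partial M^*\subset\{z=c\}$ with $M^*=M\cap\{x\ge 0\}\cap\{z\ge c\}$ as the paper does, since Proposition~\ref{prop:zmax-theorem} and Corollary~\ref{quarter-slab-corollary} are stated for translators in a half-space with planar boundary. Second, your properness-of-$\Phi$ argument needs $z$ bounded above on $M\cap(P\times\RR)$, which is exactly what the linear height bound supplies once $P$ is bounded in the $x$-direction; that part is fine.
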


 \begin{proof}
 
 Let $M^*=M\cap\{x\geq a\}$. By hypothesis, there  exists a  real number, $c$, such that $M^*:=M\cap\{x\geq a\}\cap\{z\geq c\}$
 satisfies the hypothesis of Corollary~\ref{quarter-slab-corollary}, namely that  $\partial M^*\subset [a,\infty)\times [-B,B]\times\{c\}$. 
 Therefore, 
$$ z\leq c+\lambda (x-a)$$
 for all $(x,y,z)\in M^*$. 
 
Suppose $B<\pi/2$. Translate $M^*$ horizontally, if necessary, so that  that $a=0$.
 For  any $\beta$ satisfying $B<\beta<\pi/2$, we may find a Scherk translator $v_{\beta}$ defined on the rectangle $R_{L,\beta} =(-L,L)\times (-\beta, \beta)$,
  (where $L=L(\beta)$) with the property that  $v_{\beta}$ equals $-\infty$ on the horizontal sides of the boundary of  $R_{L,\beta}$ and $+\infty$ 
  on vertical sides.  Since the height of  $M^*\cap \{0\leq x\leq L\} $ is bounded above by $c+\lambda x$, there exists a constant  $d$ so that  $v_{\beta}+d$
  lies above $M^*\cap \{0\leq x\leq L\} $.  This violates the maximum principle because the boundary of the Scherk translator (see  Proposition~\ref{prop: Scherk-translator})
   consists of the four vertical lines
 through the corners of $R_{L,\beta}$, and $\beta>B$.
 \end{proof}

 \noindent
{\bf Conflict of interest}: The authors had no conflicts of interest.
 
\begin{bibdiv}
\begin{biblist}

\bib{Altschuler-Wu}{article}{  
author={Altschuler, S. J.},
author={ Wu, Lang F.}, 
title={Translating surfaces of the non-parametric mean curvature flow with prescribed contact angle},
journal={ Calc. Var. Partial Differential Equations},
volume={2},
 date={1994}, 
 number={1}, 
 pages={101--111},
 }

\bib{CSS}{article}{ 
author={Clutterbuck, J.},
author={ Schn{\"u}rer, O.},
author={Schulze, F.}, 
title={Stability of translating solutions to mean curvature flow},
journal={Calc. Var. and Partial Differential Equations},
volume={29},
date={2007}, 
pages={281--293},
}

\bib{GMM22}{article}{
author={Gama, E.S.},
author={Mart\'{\i}n, F.},
author={M\o ller, N.M.},
   title={Finite entropy translating solitons in slabs},
   journal={Preprint 	arXiv:2209.01640 },
   date={2022},
}

\bib{graphs}{article}{
   author={Hoffman, D.},
   author={Ilmanen, T.},
   author={Mart\'{\i}n, F.},
   author={White, B.},
   title={Graphical translators for mean curvature flow},
   journal={Calc. Var. Partial Differential Equations},
   volume={58},
   date={2019},
   number={4},
   pages={Paper No. 117, 29},
   issn={0944-2669},
   review={\MR{3962912}},
   doi={10.1007/s00526-019-1560-x},
}

\bib{himw-correction}{article}{
   author={Hoffman, D.},
   author={Ilmanen, T.},
   author={Mart\'{\i}n, F.},
   author={White, B.},
   title={Correction to: Graphical translators for mean curvature flow},
   journal={Calc. Var. Partial Differential Equations},
   volume={58},
   date={2019},
   number={4},
   pages={Art. 158, 1},
   issn={0944-2669},
   review={\MR{4029723}},
   review={Zbl 07091751},
   doi={10.1007/s00526-019-1601-5},
}

\bib{himw-survey}{article}{
author={Hoffman, D.},
author={Ilmanen, T.},
author={Mart\'{\i}n, F.},
author={White, B.},
title={Notes on Translating Solitons of the Mean Curvature Flow},
   conference={
      title={T. Hoffman et al. (eds.), Minimal Surfaces: Integrable Systems and Visualisation},
   },
   book={
      series={Springer Proceedings in Mathematics \& Statistics},
      volume={349},
      publisher={Springer Nature Switzerland AG},
      },
     date={2021},
   pages={147--168},
   doi={10.1007/978-3-030-68541-6\;9},
}

\bib{scherk}{article}{
author={Hoffman, D.},
author={Mart\'{\i}n, F.},
author={White, B.},
title={Scherk-like Translators for Mean Curvature Flow},
volume={122},
date={2022},
number={3},
pages={421-465},
journal={J. Differential Geometry},
}

\bib{morse-rado}{article}{
author={Hoffman, D.},
author={Mart\'{\i}n, F.},
author={White, B.},
title={Morse-Rad\'{o} theory for minimal surfaces},
date={2023},
journal={J. Lond. Math. Soc.},
status={to appear},
doi={https://doi.org/10.1112/jlms.12791},
}

\bib{annuloids}{article}{
   author={Hoffman, D.},
   author={Mart\'{\i}n, F.},
   author={White, B.},
   title={Translating Annuli for Mean Curvature Flow},
   date={2023},
  journal={Preprint arXiv:2308.02210},
  doi={https://doi.org/10.48550/arXiv.2308.02210},
}

\bib{ilmanen_1994}{article}{
   author={Ilmanen, T.},
   title={Elliptic regularization and partial regularity for motion by mean
   curvature},
   journal={Mem. Amer. Math. Soc.},
   volume={108},
   date={1994},
   number={520},
   pages={x+90},
   review={\MR{1196160 (95d:49060)}},
   review={Zbl 0798.35066},
}

\bib{DE-NM-MR}{article}{
   author={Impera, Debora}, 
   author={M\o ller, Niels Martin}, 
   author={Michele Rimoldi},
   title={Rigidity and non-existence results for collapsed translators},
    date={2023},
   journal={Preprint},
   
}

\bib{Massey}{article}{
   author={Massey, William S},
   title={Surfaces of Gaussian curvature zero in Euclidean $3$-space},
   journal={ Tohoku Math. J.(2)},
   volume={14(1)},
   date={1962},
   pages={73-79},
}

\bib{spruck-xiao}{article}{
   author={Spruck, Joel},
   author={Xiao, Ling},
   title={Complete translating solitons to the mean curvature flow in $\mathbb{R}^3$ with nonnegative mean curvature},
   journal={Amer. J. Math.},
   volume={142},
   date={2020},
   number={3},
   pages={993--1015},
   issn={0002-9327},
   review={\MR{4101337}},
   doi={10.1353/ajm.2020.0023},
}

\bib{white87}{article}{
author={White, Brian},
   title={The space of $m$-dimensional surfaces that are stationary for a
   parametric elliptic functional},
   journal={Indiana Univ. Math. J.},
   volume={36},
   date={1987},
   number={3},
   pages={567--602},
   issn={0022-2518},
   review={\MR{905611}},
   doi={10.1512/iumj.1987.36.36031},
}

\bib{white-nature}{article}{
   author={White, B.},
   title={The nature of singularities in mean curvature flow of mean-convex
   sets},
   journal={J. Amer. Math. Soc.},
   volume={16},
   date={2003},
   number={1},
   pages={123--138 (electronic)},
   issn={0894-0347},
   review={\MR{1937202 (2003g:53121)}},
   doi={10.1090/S0894-0347-02-00406-X},
}

\bib{white-entropy}{article}{
author={White, B.},
   title={The Boundary Term in Huisken's Monotonicity Formula and the Entropy of Translators},
   journal={Comm. Anal. Geom.},
   status={to appear},
   note={Preprint arXiv:2204.01983},
   date={2022},
}

\end{biblist}

\end{bibdiv}

 \end{document}